\documentclass[10pt,reqno]{amsart}
\UseRawInputEncoding
\usepackage[utf8]{inputenc}
\usepackage[T1]{fontenc}

\usepackage[usenames, dvipsnames]{color}
\usepackage{ulem}

\usepackage{dsfont, amsfonts, amsmath, amssymb,amscd, stmaryrd, latexsym, amsthm, dsfont}
\usepackage[french,english]{babel}
\usepackage{enumerate}
\usepackage{longtable}
\usepackage{geometry}
\usepackage{float}
\usepackage{tikz}
\usetikzlibrary{shapes,arrows}

\usepackage{float}
\usepackage{tikz}
\usepackage{xypic}
\usetikzlibrary{shapes,arrows}

\usepackage{pifont}
\usepackage{float}
\usepackage{tikz}
\usepackage{xypic}
\usetikzlibrary{shapes,arrows}

\definecolor{darkcerulean}{rgb}{0.03, 0.27, 0.49}
\definecolor{firebrick}{rgb}{0.7, 0.13, 0.13}
\definecolor{forestgreen(traditional)}{rgb}{0.0, 0.27, 0.13}
\definecolor{hanpurple}{rgb}{0.32, 0.09, 0.98}
\definecolor{forestgreen(web)}{rgb}{0.13, 0.55, 0.13}
\newtheorem{theorem}{Theorem}[section]
\newtheorem{lemma}[theorem]{Lemma}
\newtheorem{proposition}[theorem]{Proposition}
\newtheorem{corollary}[theorem]{Corollary}
\newtheorem{remark}[theorem]{\bf Remark}
\newtheorem{conjecture}[theorem]{\bf Conjecture}
\newtheorem{example}[theorem]{\bf Example}

\usepackage[pagebackref]{hyperref}
\renewcommand*{\backref}[1]{}\renewcommand*{\backrefalt}[4]{\ifcase #1 (\tt not cited)\or (\tt cited on page~#2)\else (\tt cited on pages~#2)\fi}

\usepackage{hyperref}
\hypersetup{
	colorlinks=true,
	urlcolor=blue,
	citecolor=blue}
\usepackage{mathrsfs}
\usepackage{amsmath}
\usepackage{amssymb}
\usepackage{amsthm}
\usepackage{amsfonts}
\usepackage{amscd}
\usepackage[mathscr]{eucal}
\usepackage{multirow}
\usepackage[all]{xy}
\newcommand{\Z} {{\mathbb  Z}}
\newcommand{\Q}{{\mathbb  Q}}
\newcommand{\F}{{\mathbb  F}}
\newcommand{\C}{{\mathbb  C}}

\newcommand{\R} {{\mathbb R}}

\def\kk{\mathds{k}}

\begin{document}
	
	\def\NN{\mathbb{N}}
	\def\RR{\mathds{R}}
	\def\HH{I\!\! H}
	\def\QQ{\mathbb{Q}}
	\def\CC{\mathds{C}}
	
	\def\FF{\mathbb{F}}
	\def\KK{\mathbb{K}}
	
	\def\ZZ{\mathbb{Z}}
	\def\DD{\mathds{D}}
	\def\OO{\mathcal{O}}
	\def\kk{\mathds{k}}
	\def\KK{\mathbb{K}}
	
	\def\FF{\mathbb{F}}

	\def\2r{\mathrm{rank_2}}
	\def\rg{\mathrm{rank}}

	\def\vep{\varepsilon}
	\def\Gal{\mathrm{\rm Gal}}	
	\def\G{\mathrm{G}}
	\def\rank{\mathrm{rank}}
	\def\k{\mathrm{k}}
	\def\K{\mathrm{K}}
	\def\L{\mathrm{L}}

	\selectlanguage{english}


\title{Iwasawa invariants and class number parity of multi-quadratic number fields}

	\author[Qinhao Li]{Qinhao Li}
	\address{Qinhao Li: School of Mathematics,
		Nanjing University,
		Nanjing 210093, P.R.China}
	\email{qinhaolimath@qq.com}

	\author[Derong Qiu]{Derong Qiu$^{\ast}$}\thanks{$^{\ast}$ Corresponding author.}
	\address{Derong Qiu: School of Mathematical Sciences,
		Capital Normal University,
		Beijing 100048, P.R.China}
	\email{derong@mail.cnu.edu.cn}

\renewcommand{\subjclassname}{2020 Mathematics Subject Classification}
\subjclass{11R11; 11R27; 11R29}
	\keywords{Iwasawa theory,   Riemann-Hurwitz formula,   class number parity.}


	\begin{abstract}
		In this paper, based mainly on the method of Iwasawa and Kida, by studying
		in detail the Hasse's unit index and the ramifications of prime ideals, we obtain explicit results
		of Iwasawa invariants $ \lambda_{2} $ of the cyclotomic $ \Z_{2}$-extensions of number fields.
		In particular, under Greenberg's conjecture, we obtain an explicit formula of $ \lambda_{2} $
		for imaginary multi-quadratic number fields. As an application, we give a criteria of determining
		class number parity of multi-quadratic number fields.
		
	\end{abstract}

	\selectlanguage{english}
	
	\maketitle

\section{ Introduction}

Let $ K $ be a number field, i.e., a finite extension of $ \Q, $ the field of rational numbers. Let $ p $
be a prime number and let $ \Z_{p} $ denote the ring of $ p$-adic integers. A $ \Z_{p}$-extension of $ K $
is a Galois extension $ K _{\infty} / K $ whose Galois group $ \text{\rm Gal}(K _{\infty} / K) $ is
topologically isomorphic to $ \Z_{p}. $ For each nonnegative integer $ n, $ there exists a unique intermediate
field $ K_{n} $ of $ K _{\infty} / K $ such that $ K_{n} /K $ is a cyclic extension of degree $ p^{n}. $
Moreover, $$ K = K_{0} \subset K_{1} \subset \cdots \subset K_{n} \subset \cdots \subset
K _{\infty} = \bigcup_{n=0}^{\infty} K_{n}. $$
Let $ h(K_{n}) $ denote the class number of $ K_{n}, $ and let $ p^{e_{n}} $ be the exact power of $ p $
dividing $ h(K_{n}). $
A well known spectacular result due to Iwasawa \cite{Iw3},\cite{Wa}, affirms that there exist
integers $ \lambda, \mu \geq 0 $ and $ \nu, $ all independent of $ n, $ and an integer $ n_{0} $ such that
 $$ e_{n} = \lambda n + \mu p^{n} + \nu, $$
for all $ n \geq n_{0}. $  The integers $ \lambda, \mu $ and $ \nu $ are usually called the Iwasawa invariants
of $ K _{\infty} / K. $  \\
$ K $ has a cyclotomic $ \Z_{p}$-extension $ K_{\infty}, $ which is a subfield of $ K(\mu_{p^{\infty}}), $
where $ \mu_{p^{\infty}} $ is the group of all $ p$-power roots of unity in $ \C, $ the field of complex
numbers.

\begin{conjecture}[Iwasawa \cite{Iw4}] $ \mu = 0 $ for any cyclotomic $ \Z_{p}$-extension
$ K_{\infty} / K. $
\end{conjecture}

This was proved by Ferrero and Washington for abelian number field $ K. $
\par \vskip 0.2 cm

\begin{conjecture}[Greenberg \cite{Gre}] \label{Greenberg'sconjecture}
$ \lambda = \mu = 0 $ for any totally real number fields $ K. $  	
	 \end{conjecture}
At present, the two conjectures are generally still open. 	\\
Let $ L $ be a finite Galois $ p$-extension of $ K, $ Kida \cite{Ki1} showed a relation of the Iwasawa
invariants $ \lambda $ and $ \mu $ corresponding to the cyclotomic $ \Z_{p}$-extensions of $ L $ and $ K, $
respectively. This result was later generalized by Iwasawa in \cite{Iw1}, in particular, if $ L / K $ is
a cyclic extension of degree $ p, $ Iwasawa gave a formula for calculating $ \lambda $ of the
cyclotomic $ \Z_{p}$-extensions of $ L $ (see Theorem \ref{IwasawaRiemannHurewitz} in the following).
Such formulas of Iwasawa and Kida are usually viewed as analogous to the Riemann-Hurwitz's formula
of algebraic curves. Schettler \cite{Sc} later generalized it to the case that $ L / K $ is cyclic of $ p$-power degree.
While for $ K = F (\sqrt{d_{1}}, \cdots, \sqrt{d_{r}}, \sqrt{-d}) $ over a real abelian number
field $ F $ of conductor $ \mathfrak{f}_{F/\Q}, $ where $ d, d_{1}, \cdots, d_{r} (\in \Z_{\geq 1}) $
and $ \mathfrak{f}_{F/ \Q} $ are pair-wise prime to each other. By using Iwasawa's formula, Mouhib \cite{Mo1}
explicitly calculated $ \lambda_{2}(K). $  \\
The problem of class number parity has a long history, and has been extensively studied in literature
(e.g., see \cite{CH}, \cite{C}, \cite{F}, \cite{B}, \cite{Ku}, \cite{Mo2}). For real abelian number field
$ F $ of $ 2$-power degree, Fr$\ddot{\text{o}}$hlich proved in \cite[Theorem 5.6]{Fro} that, if there are
more than four prime numbers ramified in $ F / \Q, $ then the class number of $ F $ is odd. Bi-quadratic fields
with odd class number are determined (see e.g. \cite[Theorem 20.3]{CH}, \cite[Lemma 1.3]{C}), and the imaginary
multi-quadratic number fields with class number one are given in \cite{Y} and \cite{F}.  \\
In this paper, firstly we will calculate $ \lambda_{2} $ (i.e., the $ \lambda $ invariant for $ p = 2 $)
in more general cases, then we will use the explicit formula of $ \lambda_{2} $ to consider the problem of
class number parity of multi-quadratic number fields.  \\
For $ \lambda_{2}, $ by studying in detail the Hasse's unit index
and the ramifications of prime ideals, we obtain more explicit results of $ \lambda_{2}. $
In particular, under the Greenberg's conjecture above, we obtain an explicit formula of $ \lambda_{2} $
for imaginary multi-quadratic number fields. As an application, we give a criteria of determining
class number parity of multi-quadratic number fields. Our main results are as follows
\par \vskip 0.2 cm
\begin{theorem}[see Theorem \ref{MainTheorem} and Corollary \ref{MainCorollary} below] Let $ d_{1}, \cdots, d_{r}, d $ be
square-free positive integers, $ F \neq \Q $ be a real abelian number field with conductor
$ \mathfrak{f}_{F/\Q} $ prime to $ d \cdot \prod_{i=1}^{r} d_{i}. $ Let
$ K = F(\sqrt{d_{1}}, \cdots, \sqrt{d_{r}}, \sqrt{-d}) $ be a degree
$ 2^{r+1} $ extension over $ F. $ Assume $ \sqrt{2} \notin K. $ Let $ K_{\infty} $ be the
cyclotomic $ \Z_{2}$-extension of $ K. $
Denote $ K^{(i)} = F(\sqrt{d_{1}}, \cdots, \sqrt{d_{i}}) $ and  $ t_{i} = d_{i} \cdots d_{r} d \ (1 \leq i \leq r). $
Assume there exists an odd prime $ p_{r} $ such that $ p_{r} \mid d_{r} $ but $ p_{r} \nmid d \prod_{i=1}^{r-1} d_{i}. $
For any positive $ n, $ denote $ \text{\rm sqf}(n) = \prod_{\text{prime} \ p|n, 2 \nmid \nu_{p}(n)} p. $
Also, for $ i \in \{1, \cdots, r\}, $ denote  \\
$  s_{i} = \sharp \{v : v \ \text{is a place of} \ K_{\infty}^{(i-1)}(\sqrt{-t_{i}}) \ \text{over a
prime number} \ p \ \text{with} \ p \mid (d_{i}, \text{\rm sqf}(t_{i+1})) \ \text{and} \ p \nmid
2 \prod_{t=1}^{i-1} d_{t} \}, $ \\ and \ $ f_{i} = \sharp \{v : v \ \text{is a place of} \
K_{\infty}^{(i-1)} \ \text{over a prime number} \ p \ \text{with} \ p \mid d_{i} \ \text{and} \ p \nmid
2 \prod_{t=1}^{i-1} d_{t} \}. $  Then
$$  \lambda_{2}(K) = \lambda_{2}(K(\sqrt{2})) = 2^{r} \lambda_{2}(F(\sqrt{-t_{1}})) + \lambda_{2}(K^{+}) - 2^{r} \lambda_{2}(F)
+ \sum_{i=1}^{r} 2^{r-i} (s_{i} - f_{i}) + \delta,  $$
where $ \delta = \left \{\begin{array}{l} 1, \quad \quad \text{if} \ \sqrt{d} \in K^{+}(\sqrt{2}),  \\
0, \quad \quad  \text{if otherwise}.
\end{array} \right. $   \\
If $ F = \Q, $ i.e., $ K = \Q(\sqrt{d_{1}}, \cdots, \sqrt{d_{r}}, \sqrt{-d}) $ with
$ [K : \Q] = 2^{r+1} $ as above, then
$$  \lambda_{2}(K) = \lambda_{2}(K^{+}) + \sum_{\substack{\text{primes} \ p | \prod_{i=1}^{r} d_i, \\ p \neq 2}}
2^{\nu_{2}(p^{2} - 1) + r - \theta - 4} + \sum_{\substack{\text{primes} \ p | d, \\ p\nmid 2 \prod_{i=1}^{r} d_{i}}}
2^{\nu_{2}(p^{2} - 1) + r - \theta - 3} - 2^{r - \theta} + \delta.  $$
where $ \theta = \left \{\begin{array}{l} 1, \quad \text{if} \ \sqrt{2} \in K^{+},  \\
0, \quad \text{if otherwise}.
\end{array} \right. $
\end{theorem}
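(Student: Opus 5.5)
Throughout, I work with the minus part. Since $K$ is CM with maximal real subfield $K^{+}$, and the same holds for every layer of $K_{\infty}$, I write $\lambda_{2}(K) = \lambda_{2}(K^{+}) + \lambda_{2}^{-}(K)$. For CM fields and $p=2$, the Iwasawa--Riemann--Hurwitz formula (Theorem \ref{IwasawaRiemannHurewitz}) applies to the minus parts: for a quadratic extension $L/M$ of CM fields, it expresses $\lambda^{-}(L)$ in terms of $2\lambda^{-}(M)$, the splitting of places of $M_{\infty}$ ramified in $L_{\infty}$, and a correction term.

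\textbf{Reduction to $K(\sqrt{2})$.} First I note that $K_{\infty} = K(\sqrt{2})_{\infty}$, because $\sqrt{2}$ lies in the first layer of the cyclotomic $\Z_{2}$-extension of $\Q$. This gives $\lambda_{2}(K)=\lambda_{2}(K(\sqrt{2}))$ at once.

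\textbf{Tower and induction.} I then climb the tower
\[
F(\sqrt{-t_{1}}) \subset K^{(1)}(\sqrt{-t_{1}}) \subset \cdots \subset K^{(r)}(\sqrt{-t_{1}}) = K .
\]
The step from $K^{(i-1)}(\sqrt{-t_{1}})$ to $K^{(i)}(\sqrt{-t_{1}})$ adjoins $\sqrt{d_{i}}$. Equivalently, it is generated by $\sqrt{-t_{i+1}}$ over $K^{(i)}$, which is the viewpoint that makes the indices $s_{i}$ and $f_{i}$ appear. At each step I apply Iwasawa's formula in its minus-part form
\[
\lambda^{-}(L) = 2\lambda^{-}(M) + \sum_{w}(\ldots) - (\text{unit-index correction}) .
\]
Here the sum runs over the places of $M_{\infty}^{+}$ that split in $M_{\infty}$ and ramify in $L_{\infty}$. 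The ramified odd primes are exactly those dividing $d_{i}$ and not dividing $2\prod_{t<i} d_{t}$, since the conductor of $F$ is prime to the $d$'s. Counting places then turns the contribution into $s_{i}-f_{i}$: a place counts in $s_{i}$ when it splits in the CM field $K_{\infty}^{(i-1)}(\sqrt{-t_{i}})$, and in $f_{i}$ as a place of the real field. Iterating the doubling $r$ times produces the coefficients $2^{r-i}$ and the leading term $2^{r}\lambda^{-}(F(\sqrt{-t_{1}}))$. Writing $\lambda^{-}(F(\sqrt{-t_{1}})) = \lambda(F(\sqrt{-t_{1}})) - \lambda(F)$ and adding $\lambda(K^{+})$ then gives the first formula.

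\textbf{The correction $\delta$ (main obstacle).} The hardest part will be the correction term. It comes from Hasse's unit index $Q=[E:WE^{+}]$ along the tower, together with whether $-1$ is a norm from the infinite layers. I would show that this index is stable, equal to $1$, at every step except when $\sqrt{-1}$ enters. Since $\sqrt{2}\notin K$, this happens exactly when $\sqrt{d}\in K^{+}(\sqrt{2})$: then $K_{\infty}$ contains $\Q(\mu_{2^{\infty}})$, the unit index jumps, and the error term contributes $1$. This requires a careful analysis of roots of unity $\mu_{2^{\infty}}$ in the layers. I would do it by using the hypothesis on $p_{r}$, which guarantees that the last step is ramified at an odd prime, so that the unit index is controlled there.

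\textbf{The case $F=\Q$.} Here $\lambda_{2}(\Q)=0$, and $\lambda_{2}(\Q(\sqrt{-t_{1}}))$ is given by Kida's formula for imaginary quadratic fields, namely $\sum 2^{\nu_{2}(p^{2}-1)-3} - 1$ over the odd primes dividing $t_{1}$. The number of places of $\Q_{\infty}$ above an odd prime $p$ is $2^{\nu_{2}(p^{2}-1)-3}$. The number above $p$ in $K^{(i-1)}_{\infty}$ is this times $2^{(\text{number of independent splittings})}$, adjusted by $\theta$ when $\sqrt{2}\in K^{+}$, since then the base field already contains the first layer. Summing the geometric contributions over $i$ telescopes to the stated exponents $\nu_{2}(p^{2}-1)+r-\theta-4$ and $\nu_{2}(p^{2}-1)+r-\theta-3$, with the term $-2^{r-\theta}$ coming from the $-1$ in Kida's formula. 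This last bookkeeping is routine but delicate; I expect the $\delta$ analysis to be the main difficulty.
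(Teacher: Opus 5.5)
\textbf{The gap is in the correction term $\delta$.} Your tower is the paper's, since $K^{(i)}(\sqrt{-t_1})=K^{(i)}(\sqrt{-t_{i+1}})$. Subtracting Theorem \ref{IwasawaRiemannHurewitz} for $L^+_\infty/M^+_\infty$ from the same formula for $L_\infty/M_\infty$ gives the leading terms and $s_i-f_i$ exactly as in the paper.

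Obtain $s_i-f_i$ by that subtraction, not from a $p$-odd Kida sum over split places. At $p=2$, the places of $M^+_\infty$ over primes $p\mid d_i$ with $p\nmid\text{\rm sqf}(t_{i+1})$ ramify in both $M_\infty$ and $L^+_\infty$. Each contributes $-1$, and your described sum omits them.

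The correction is $\chi(G_r,E_{K_\infty})-\chi(G_r,E_{K^+_\infty})$, and the mechanism you propose for it is false in both directions.
\begin{itemize}
\item If $\sqrt d\in K^+(\sqrt2)$, Hasse's index of $K_\infty$ does not jump; it equals $1$. For $\varepsilon\in E_{K_\infty}$, the root of unity $\varepsilon/\bar\varepsilon$ is a square $\xi^2$ in $W_{K_\infty}\supseteq\mu_{2^\infty}$, and then $\varepsilon\xi^{-1}$ is real (Proposition \ref{HasseUnitIndex}).
\item The index can be $2$ without $\sqrt{-1}$ entering. Take $K=\Q(\sqrt{21},\sqrt{-3})$, with $d_1=21$, $d=3$, $p_1=7$. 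Here $3\cdot\frac{5+\sqrt{21}}{2}=\bigl(\frac{3+\sqrt{21}}{2}\bigr)^2$, so $\eta=\frac{3+\sqrt{21}}{2\sqrt{-3}}$ is a unit with $\bar\eta=-\eta$. Since $-1\notin W_{K_\infty}^2=\mu_3$, the index is $2$, yet $\delta=0$ for this $K$.
\end{itemize}
So the lemma ``$Q=1$ except when $\sqrt{-1}$ enters, where it jumps'' cannot be proved, and it would not produce $\delta$ anyway.

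\textbf{What the correction actually measures.} It is the $G_r$-cohomology of $E_{K_\infty}/E_{K^+_\infty}$. This module contains $W_{K_\infty}/\{\pm1\}$, with quotient $E_{K_\infty}/E_{K^+_\infty}W_{K_\infty}$ of order at most $2$. A finite module has trivial Herbrand quotient, so the unit index never contributes. The $+1$ comes from the infinite module $\mu_{2^\infty}/\{\pm1\}$, on which the generator $\tau$ of $G_r$ acts by inversion; this gives $\widehat H^0\cong\Z/2\Z$ and $\widehat H^{-1}=0$.

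The inversion (equivalently $N_{G_r}(\mu_{2^\infty})\subseteq\{\pm1\}$) needs $\sqrt{-1}\notin K^{(r-1)}_\infty(\sqrt{-t_r})$, and this is where $p_r$ is used. The prime $p_r$ ramifies in each $K^{(i)}(\sqrt{-t_{i+1}})/K^{(i)}$ for $i<r$. That also forces $W(2)=\{\pm1\}$ and kills the correction at every lower step (Proposition \ref{CohomologyEqual}). It is not a ramified prime of the last step: $p_r$ is unramified in $K/K^{(r-1)}(\sqrt{-t_r})$ and in $K/K^+$.

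\textbf{How the paper handles it.} It treats the case $\sqrt d\in K^+(\sqrt2)$ through Proposition \ref{CohomologyEqualPlusOne}. The remaining case, $\sqrt d\notin K^+(\sqrt2)$ with $d\mid\prod d_i$, is where neither criterion of Proposition \ref{HasseUnitIndex} applies, so the index is not known to be $1$. The paper passes to $L=K(\sqrt{-1})$. Both $L_\infty/K_\infty$ and $L^+_\infty/K^+_\infty$ are unramified outside $2$, which gives $2\lambda_2(K)=\lambda_2(L)-1+2\lambda_2(K^+)-\lambda_2(L^+)$ (Lemma \ref{LambdaTwoForddividesProddi}). Then $L$ falls under the $\sqrt{-1}$ case.

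Your proposal has neither this cohomological computation nor any treatment of that third case. As written, it does not establish the $\delta$ term.
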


\begin{remark}
 In particular, if Greenberg's conjecture holds for $ K^{+}, $ then
for case $ F = \Q, $ we have the following explicit formula
$$  \lambda_{2}(K) = \sum_{\substack{\text{primes} \ p | \prod_{i=1}^{r} d_i, \\ p \neq 2}}
2^{\nu_{2}(p^{2} - 1) + r - \theta - 4} + \sum_{\substack{\text{primes} \ p | d, \\ p\nmid 2 \prod_{i=1}^{r} d_{i}}}
2^{\nu_{2}(p^{2} - 1) + r - \theta - 3} - 2^{r - \theta} + \delta.  $$
\end{remark}

\begin{theorem}[see Theorem \ref{MainApplicationParity} below] Let $ K $ be an imaginary multi-quadratic number field
with $ \sqrt{2} \in K. $ Then $ 2 \nmid h(K) $ if and only if $ K $ is taken as one of the following form
\begin{enumerate}[$1)$]
\item $ \Q(\sqrt{2}, \sqrt{-p}), $ where $ p $ is a prime number with $ p \equiv 3 \ (\text{\rm mod} \ 8); $
\item $ \Q(\sqrt{2}, \sqrt{-1}, \sqrt{-p}), $ where $ p $ is a prime number with $ p \equiv 3, 5 \ (\text{\rm mod} \ 8); $			
\item $ \Q(\sqrt{2}, \sqrt{-p}, \sqrt{-q}), $ where $ p $ and $ q $ are different prime numbers with
$ p, q \equiv 3 \ (\text{\rm mod} \ 8); $
\item $ \Q(\sqrt{2}, \sqrt{-1}). $
		\end{enumerate}
Moreover, for $ K = \Q(\sqrt{2}, \sqrt{-p}), $ if $ p \equiv 5 \ (\text{\rm mod} \ 8), $ then $ 2 \mid h(K) $
but $ 4 \nmid h(K). $
\end{theorem}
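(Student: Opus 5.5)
The plan is to reduce the parity of $h(K)$ to the vanishing of Iwasawa invariants, and then use the explicit formula for $\lambda_{2}$ from the first theorem (Theorem \ref{MainTheorem}, Corollary \ref{MainCorollary}) to pin down when they vanish. Since $\sqrt{2}\in K$, the cyclotomic $\Z_{2}$-extension $K_{\infty}/K$ is totally ramified at the primes above $2$. When there is a single prime of $K$ above $2$, the standard argument (Iwasawa/Fukuda) gives $2\nmid h(K)$ if and only if $2\nmid h(K_{n})$ for all $n$, if and only if $X_{\infty}=0$. In that case $\lambda_{2}(K)=\mu_{2}(K)=0$, and $\mu_{2}=0$ holds by Ferrero--Washington since $K$ is abelian. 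So the first step is to show that oddness of $h(K)$ forces very few ramified primes. I will use genus theory: the $2$-rank of the class group of $K/K^{+}$, or of $K/k$ for a suitable quadratic subfield, is bounded below by (the number of ramified primes) $-1-$ (a unit index correction). This restricts $K$ to multi-quadratic fields with at most two odd ramified primes and $r+1\le 3$. This is the analogue of Fr\"ohlich's result \cite[Theorem 5.6]{Fro}, and I will use it in the form that the ambiguous class number formula $\frac{2^{t-1}}{[E:E\cap N]}$ must equal $1$.

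Second, within the finite list of candidate shapes, $K=\Q(\sqrt{2},\sqrt{-p})$, $\Q(\sqrt{2},\sqrt{-1},\sqrt{-p})$, $\Q(\sqrt{2},\sqrt{-p},\sqrt{-q})$, $\Q(\sqrt{2},\sqrt{p},\sqrt{-q})$, etc., I will apply the formula of Corollary \ref{MainCorollary}, now with $\sqrt{2}\in K$, so $\lambda_{2}(K)=\lambda_{2}(K(\sqrt2))$ is computed from the $\sqrt{2}\notin$ version. The real part $K^{+}$ has odd class number and a unique prime above $2$ in these small cases (e.g. $\Q(\sqrt2)$, $\Q(\sqrt2,\sqrt p)$ with suitable $p$). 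Its $\lambda_{2}$ therefore vanishes unconditionally, by the same totally-ramified criterion, so Greenberg's conjecture is not needed here. Plugging in $\nu_{2}(p^{2}-1)=3$ for $p\equiv 3,5\pmod 8$ and $\ge4$ for $p\equiv\pm1\pmod 8$, the formula gives $\lambda_{2}(K)=0$ exactly when $p\equiv 3\pmod 8$ in case 1). For case 2) it gives the condition $p\equiv 3,5\pmod 8$, for case 3) both primes must be $\equiv 3\pmod 8$, and case 4) is trivial. For the converse, I will check in each listed case that $2$ has a unique prime above it in $K$ (using the decomposition of $2$ in $\Q(\sqrt{-p})$ for $p\equiv3\pmod 8$, where $2$ is inert) and that $\lambda_{2}=0$; by Nakayama this yields $X_{\infty}=0$, hence $2\nmid h(K)$.

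Third, the statement that $2\,\|\,h(K)$ for $\Q(\sqrt2,\sqrt{-p})$ with $p\equiv5\pmod 8$ follows from two ingredients. The $2$ above $\sqrt{-p}$ ramifies and there are two ramified primes, so genus theory gives $2\mid h$. For the upper bound, the computed $\lambda_{2}=1$ together with $\nu=0$ at level $0$, or more directly the Kuroda class number formula $h(K)=\frac{Q}{2}h(\Q(\sqrt2))h(\Q(\sqrt{-p}))h(\Q(\sqrt{-2p}))$ with the known $2$-parts $2\| h(\Q(\sqrt{-p}))$ and $2\|h(\Q(\sqrt{-2p}))$ for $p\equiv5\pmod 8$ and $Q\in\{1,2\}$, gives $4\nmid h(K)$. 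I will take the Kuroda route, which is cleaner.

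The main obstacle is the first step: controlling the unit index $[E:E\cap N]$ in genus theory so that the list of candidate fields is genuinely exhaustive, and verifying that $2$ has a unique prime above it exactly in the right cases. Both depend on Hasse's unit index $Q(K)$, which the paper studies in detail, and the earlier results on it will be invoked there.
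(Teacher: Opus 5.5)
There is a genuine gap in the converse direction. Your criterion is: a unique prime above $2$ and $\lambda_{2}(K)=0$, hence $X_{\infty}=0$ by Nakayama, hence $2\nmid h(K)$. This criterion is false. $\lambda_{2}=\mu_{2}=0$ only makes $X_{\infty}$ finite. Nakayama's lemma yields $X_{\infty}=0$ only from $(X_{\infty})_{\Gamma}\cong A_{K}=0$, which is exactly what you want to prove.

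The statement itself refutes the criterion. Take $K=\Q(\sqrt2,\sqrt{-p})$ with $p\equiv 5\pmod 8$. Then $2$ is totally ramified in $K/\Q$, $K_{\infty}=\Q(\sqrt{-p})_{\infty}$, and Theorem \ref{FerreroImaQuaLambda} gives $\lambda_{2}(K)=2^{\nu_{2}(p^{2}-1)-3}-1=2^{0}-1=0$. Yet $2\mid h(K)$.

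So your computations in Steps 2 and 3 are also wrong. The formula gives $\lambda_{2}=0$ for both $p\equiv 3$ and $p\equiv 5\pmod 8$; it does not give $\lambda_{2}=1$ for $p\equiv 5$. Hence $\lambda_{2}$ cannot separate these two cases in either direction. In case 3) even your hypothesis fails: $pq\equiv 1\pmod 8$, so $2$ splits in $\Q(\sqrt{pq})\subset K$ and $K$ has two primes above $2$.

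The converse therefore needs genuine class-group input, which the paper supplies:
\begin{itemize}
\item Case 2) is taken from the literature.
\item Case 1) with $p\equiv 3\pmod 8$ follows from case 2): $\Q(\sqrt2,\sqrt{-1},\sqrt{-p})/\Q(\sqrt2,\sqrt{-p})$ is ramified above $2$, so the norm map on $2$-class groups is onto.
\item For $p\equiv 5\pmod 8$ the same quadratic extension is unramified everywhere, which gives $A_{K}\cong\Z/2\Z$.
\item Case 3) is an ambiguous-class computation for $K/\Q(\sqrt2,\sqrt{-q})$. It compares unit norm indices with those of $\Q(\sqrt2,\sqrt{pq})/\Q(\sqrt2)$.
\end{itemize}

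The forward direction is also incomplete as written.
\begin{itemize}
\item Step 1 is left open; you flag the unit index yourself. The paper bypasses genus theory. $A_{F(\sqrt2)}$ is a quotient of $A_{K}$, since $K/F(\sqrt2)$ is ramified at the infinite places. So $\lambda_{2}(F)=0$ unconditionally, and $F$ lies in the list of Corollary \ref{TotallyRealMultQuaIwasawaInaVanish}.
\item Then put $\lambda_{2}(E)=0$, for $E=F(\sqrt{-d})$, into Corollary \ref{MainCorollary}. For $r\ge 2$, reading it modulo $2$ forces $\delta=0$; the size of the remaining terms then forces $F=\Q(\sqrt p,\sqrt q)$ with $d\mid pq$, i.e.\ $\delta=1$, a contradiction. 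For $r\le 1$ the formula together with the list pins down the fields.
\item To get $\lambda_{2}(E)=0$ from $2\nmid h(K)$, you cannot rely on a unique prime above $2$. This fails, e.g., for $\Q(\sqrt2,\sqrt{-p})$ with $p\equiv 7\pmod 8$ and in case 3). Instead use Fukuda over the base $E$ with $K=E_{1}$, as in Lemma \ref{IwasawaInvariantVanishFirstLevelOddCl}.
\item Your Kuroda argument for $2\,\|\,h(K)$ needs $Q=1$, not merely $Q\in\{1,2\}$; with $Q=2$ the formula would give $4\mid h(K)$. $Q=1$ does hold here: $N(1+\sqrt2)=-1$, so $\Q(\sqrt2,\sqrt{\pm(1+\sqrt2)})$ is not normal over $\Q$ and cannot equal $K$.
\end{itemize}
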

\par     \vskip  0.4 cm

\section{ Preliminary}   

\begin{lemma}\label{LiftingTheExponent}
Let $ p $ be a prime number, $ K $ a finite extension of $ \Q_{p} $
with normalized valuation $ v_{p} $ such that $ v_{p}(p) = 1. $ Then for any
$ x, y \in K^{\times} = K \setminus \{0\}, $ if $ v_{p}(x-y) > \frac{1}{p-1}, $ we have
$$ v_{p}(x^{n} - y^{n}) = (n-1)v_{p}(x) + v_{p}(x - y) + v_{p}(n) \quad (\forall
n \in \Z_{\geq 1}). $$
\end{lemma}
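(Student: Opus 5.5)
We reduce to the case $y = 1$ and then treat separately the two cases $p \nmid n$ and $n = p$, combining them multiplicatively. First observe that the hypothesis $v_p(x-y) > \frac{1}{p-1} > 0$ forces $v_p(x) = v_p(y)$. Indeed, if $v_p(x) \neq v_p(y)$, then $v_p(x-y) = \min(v_p(x),v_p(y))$, and this need not be positive. To avoid that issue, put $u = x/y$. Then
$$v_p(u-1) = v_p(x-y) - v_p(y).$$
The natural reading of the statement is that $x,y$ are units, or at least that $v_p(x)=v_p(y)$ with the displayed hypothesis. Since $x^n - y^n = y^n(u^n-1)$, the formula to be proved reduces to
$$v_p(u^n - 1) = v_p(u-1) + v_p(n).$$
This reduction requires $v_p(y) = v_p(x)$ and that the hypothesis be $v_p(u-1) > \frac{1}{p-1}$. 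I would state this normalization at the outset: assume $v_p(x) = v_p(y) = 0$, or more generally work with $u = x/y$ under the hypothesis $v_p(x/y - 1) > \frac{1}{p-1}$.

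Step 1 ($p \nmid n$). Write
$$u^n - 1 = (u-1)(u^{n-1} + \cdots + 1).$$
Since $u \equiv 1$ modulo the maximal ideal, the second factor is congruent to $n$, which is a unit. Hence $v_p(u^n-1) = v_p(u-1)$.

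Step 2 ($n = p$). Put $u = 1 + z$ with $v_p(z) = a > \frac{1}{p-1}$. Then
$$u^p - 1 = pz + \binom{p}{2} z^2 + \cdots + z^p.$$
The middle terms have valuation at least $1 + 2a > 1 + a$. The last term has valuation $pa$, and $pa > 1 + a$ exactly because $a > \frac{1}{p-1}$. So the term $pz$ strictly dominates, giving $v_p(u^p - 1) = 1 + a$. Moreover, $u^p$ again satisfies the hypothesis, since $1 + a > \frac{1}{p-1}$.

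Step 3 (general $n$). Write $n = p^k m$ with $p \nmid m$. Apply Step 2 $k$ times, which is valid because the hypothesis is preserved at each stage, and then apply Step 1 to $u^{p^k}$ with exponent $m$. This yields
$$v_p(u^n-1) = v_p(u-1) + k = v_p(u-1) + v_p(n).$$
Multiplying back by $y^n$ gives the claim.

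The only delicate point is the strict inequality $pa > 1 + a$ in Step 2. It is exactly where the bound $\frac{1}{p-1}$ enters, and it guarantees that no cancellation occurs among the dominant terms. A further subtlety is confirming that the $(n-1)v_p(x)$ term in the stated formula matches $n\,v_p(y) - v_p(y) + \ldots$. This works once $v_p(x) = v_p(y)$: the right-hand side becomes
$$n\,v_p(x) + v_p(u-1) + v_p(n) = v_p\big(y^n(u^n-1)\big),$$
using $v_p(x-y) = v_p(y) + v_p(u-1)$.
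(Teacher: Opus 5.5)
Your proof follows the paper's route: reduce to $y=1$ via $u=x/y$, get $v_p(u^p-1)=1+v_p(u-1)$ from the binomial expansion (with $pa>1+a$ as the key inequality), iterate over the $p$-part of $n$, and treat $p\nmid n$ separately. Each step is correct under the hypothesis $v_p(x/y-1)>\frac{1}{p-1}$ that you end up imposing, and that hypothesis already forces $v_p(x)=v_p(y)$.

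Your opening claim is false, however. The hypothesis $v_p(x-y)>\frac{1}{p-1}$ does not force $v_p(x)=v_p(y)$: for odd $p$, take $x=p^2$, $y=p$, $n=2$; then $v_p(x^2-y^2)=2$, while the formula gives $3$. Your fallback reading, ``or at least $v_p(x)=v_p(y)$ with the displayed hypothesis'', is not enough either: take $p=2$, $x=6$, $y=2$, $n=2$; then $v_2(32)=5$, while the formula gives $4$.

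So the lemma as literally stated is false. The paper's sentence ``we only need to show the equality on case $y=1$'' silently assumes exactly your strengthened hypothesis. Your corrected version is the right statement, and it covers the paper's only use of the lemma: $x=p^2$, $y=1$ in $\Q_2$, where $v_2(p^2-1)\geq 3>1$.
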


\begin{proof} By assumption, $ xy \neq 0. $ Since $ v_{p}(x^{n} - y^{n}) =
v_{p}((\frac{x}{y})^{n} - 1) + v_{p}(y^{n}), $ we only need to show the equality on case
$ y = 1. $ For this, firstly, let $ n = p^{r} \ (r \in \Z_{\geq 1}) $ be a $ p$-power. We use
induction on the exponent $ r. $ For $ r = 1, $ i.e., $ n = p, $ we write $ z = x-y=x-1, $
then $ v_{p}(x^{p} - 1) = v_{p}\left(\sum_{i=1}^{p}\begin{pmatrix}  p \\
i \end{pmatrix} z^{i}\right). $ Notice that \\
$ v_{p}\left(\begin{pmatrix}  p \\
i \end{pmatrix} z^{i}\right) =
\begin{cases} v_{p}(z) + 1,  & i = 1,  \\
\geq i v_{p}(z) + 1,  & 2\leq i \leq p-1,  \\
p v_{p}(z), &i = p.
\end{cases} $
By assumption, $ v_{p}(z) > \frac{1}{p-1} > 0. $
So $ p v_{p}(z) > v_{p}(z) + 1 $ and $ i v_{p}(z) + 1 > v_{p}(z) + 1 $ for all
$ 2 \leq i \leq p - 1. $ So the equality holds.
Suppose that the equality holds for $ r. $ Then for $ r + 1, $ we have
$ v_{p}(x^{p^{r+1}} - 1) = v_{p}((x^{p})^{p^{r}} - 1) = (p^{r} - 1)  v_{p}(x^{p})
+ v_{p}(x^{p} - 1) + v_{p}(p^{r}) = p(p^{r} - 1) v_{p}(x) +  v_{p}(x^{p} - 1) + r =
(p^{r+1} - 1) v_{p}(x) + v_{p}(x - 1) + v_{p}(p^{r + 1}). $ So the equality holds for
any $ p$-power $ n. $ Secondly, suppose $ p \nmid n. $ Notice that
$ v_{p}\left(\begin{pmatrix}  n \\
i \end{pmatrix} z^{i}\right) =
\begin{cases} v_{p}(z),& i = 1,  \\
\geq i v_{p}(z),&  2\leq i \leq n.
\end{cases} $
So again by the assumption $ v_{p}(z) > \frac{1}{p-1} > 0, $ we get $ v_{p}(x^{n} - 1) =
v_{p}\left(\sum_{i=1}^{n}\begin{pmatrix}  n \\
i \end{pmatrix} z^{i}\right) = v_{p}(z). $
The equality holds. Lastly, for general case, we may write $ n = p^{r} m,  p \nmid m. $
Then by above discussion, we have $  v_{p}(x^{n} - 1) =  v_{p}((x^{p^{r}})^{m} - 1) =
(m-1) v_{p}(x^{p^{r}}) + v_{p}(x^{p^{r}} - 1) + v_{p}(m) = (m-1) p^{r}v_{p}(x) + (p^{r} - 1) v_{p}(x)
+ v_{p}(x - 1) + v_{p}(p^{r}) = (n - 1) v_{p}(x) + v_{p}(x - 1) + v_{p}(n). $
The proof is completed.
\end{proof}
\begin{lemma}  Let $ p $ be an odd prime number, $ n $ a positive integer.
Let $ r_{2^{n}}(p) $ be the order of the residue class of $ p $ in the multiplicative group
$ (\Z/2^{n} \Z)^{\times}. $ Then
$$
 r_{2^{n}}(p) =  \begin{cases} 1, &
\text{if} \ n \leq v_{2}(p-1),  \\
2, &\text{if} \ v_{2}(p-1) < n \leq v_{2}(p^{2}-1),  \\
2^{n- v_{2}(p^{2}-1)+1}, & \text{if} \ n > v_{2}(p^{2}-1).
\end{cases}
$$
\end{lemma}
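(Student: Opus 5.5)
The plan is to apply Lemma \ref{LiftingTheExponent} with $p=2$ to compute $v_{2}(p^{m}-1)$ exactly, and then read off the order as the least $m$ with $v_{2}(p^{m}-1)\ge n$. Since $p$ is odd, the order of $p$ in $(\Z/2^{n}\Z)^{\times}$ divides $|(\Z/2^{n}\Z)^{\times}|=2^{n-1}$, so $r_{2^{n}}(p)$ is a power of $2$. It therefore suffices to know $v_{2}(p^{2^{k}}-1)$ for every $k\ge 0$ and to find the least $k$ with $v_{2}(p^{2^{k}}-1)\ge n$.

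For $k=0$ we have $v_{2}(p-1)$, and for $k=1$ we have $v_{2}(p^{2}-1)$, with $v_{2}(p^{2}-1)\ge 3$. For $k\ge 1$ we apply Lemma \ref{LiftingTheExponent} in $\Q_{2}$, taking $x=p^{2}$, $y=1$ and exponent $2^{k-1}$. The hypothesis holds because $v_{2}(p^{2}-1)\ge 3>\frac{1}{2-1}=1$. Since $v_{2}(x)=0$, the lemma gives
$$v_{2}(p^{2^{k}}-1)=v_{2}(p^{2}-1)+k-1 \qquad (k\ge 1).$$
We cannot apply the lemma directly with $x=p$, because $v_{2}(p-1)$ may equal $1$; this is the one point needing care, and squaring first avoids it.

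Now split into the three cases.
\begin{enumerate}[$(i)$]
\item If $n\le v_{2}(p-1)$, then $2^{n}\mid p-1$, so the order is $1$.
\item If $v_{2}(p-1)<n\le v_{2}(p^{2}-1)$, then $p\not\equiv1 \pmod{2^{n}}$ but $p^{2}\equiv1 \pmod{2^{n}}$, so the order is $2$.
\item If $n>v_{2}(p^{2}-1)$, the least $k\ge1$ with $v_{2}(p^{2}-1)+k-1\ge n$ is $k=n-v_{2}(p^{2}-1)+1$. No smaller power of $2$ works, since $v_{2}(p-1)\le v_{2}(p^{2}-1)<n$ rules out $k=0$. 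Hence $r_{2^{n}}(p)=2^{\,n-v_{2}(p^{2}-1)+1}$.
\end{enumerate}

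The only real subtlety is the verification of the lemma's hypothesis, which the squaring step handles. The rest is routine bookkeeping.
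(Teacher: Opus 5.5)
Your proof is correct and follows the paper's own argument. Both apply Lemma~\ref{LiftingTheExponent} with $x=p^{2}$, $y=1$ to get $v_{2}(p^{2^{k}}-1)=v_{2}(p^{2}-1)+k-1$ and then take the least such power of $2$. You also make explicit two points the paper leaves implicit: the order is a power of $2$ because it divides $2^{n-1}$, and the exponent $k=0$ is excluded because $v_{2}(p-1)<n$.
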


\begin{proof} If $ n \leq v_{2}(p-1), $ then $ p \equiv 1 (\text{\rm mod} \ 2^{n}), $ so
$ r_{2^{n}}(p) = 1. $ If $ v_{2}(p-1) < n \leq v_{2}(p^{2}-1), $ then $ p \not\equiv 1 (\text{\rm mod} \ 2^{n}) $
and $ p^{2} \equiv 1 (\text{\rm mod} \ 2^{n}), $ so $ r_{2^{n}}(p) = 2. $ Lastly, if
$ n > v_{2}(p^{2}-1), $ let $ m = n + 1 - v_{2}(p^{2}-1), $ then by taking $ x = p^{2}, y = 1 $
in Lemma \ref{LiftingTheExponent} above, we get $ v_{2}(p^{2^{m}}-1) = v_{2}((p^{2})^{2^{m-1}}-1) =
(2^{m-1}-1)v_{2}(p^{2}) + v_{2}(p^{2}-1) + v_{2}(2^{m-1}) = v_{2}(p^{2}-1) + m - 1 = n. $
So $ p^{2^{m}} \equiv 1 (\text{\rm mod} \ 2^{n}). $ On the other hand, for any positive integer
$ m^{\prime } < m, $ we have $ v_{2}(p^{2^{m^{\prime}}}-1) = v_{2}(p^{2}-1) + m^{\prime } - 1
< n. $ Hence $ p^{2^{m^{\prime}}} \not\equiv 1 (\text{\rm mod} \ 2^{n}), $ this shows that
$ r_{2^{n}}(p) = 2^{m} = 2^{n- v_{2}(p^{2}-1)+1}, $ and the proof is completed.
 \end{proof}
\begin{lemma}  Let $ p $ be an odd prime number, $ n $ a positive integer. If
$ p^{t} \equiv -1 (\text{\rm mod} \ 2^{n}) $ for some integer $ t \in \Z, $ then
$ p \equiv -1 (\text{\rm mod} \ 2^{n}). $
\end{lemma}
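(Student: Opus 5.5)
The plan is a direct $2$-adic valuation argument. First reduce to $t \geq 1$. If $t < 0$, then $p^{-t} \equiv (p^{t})^{-1} \equiv (-1)^{-1} = -1 \pmod{2^{n}}$, so we may replace $t$ by $-t$. If $t = 0$, the hypothesis reads $1 \equiv -1 \pmod{2^{n}}$, which forces $n = 1$. The case $n = 1$ is trivial because every odd $p$ satisfies $p \equiv -1 \pmod 2$. So from now on assume $n \geq 2$ and $t \geq 1$.

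Next we show that $t$ must be odd. If $t$ were even, then $p^{t}$ would be the square of an odd number, hence $p^{t} \equiv 1 \pmod 4$ (indeed $\pmod 8$). Since $n \geq 2$, we also have $p^{t} \equiv -1 \pmod 4$, and these two congruences give $1 \equiv -1 \pmod 4$, a contradiction. Hence $t$ is odd.

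For odd $t$ we factor
$$ p^{t} + 1 = (p+1)\,(p^{t-1} - p^{t-2} + \cdots - p + 1). $$
The second factor is an alternating sum of $t$ odd integers, and $t$ is odd, so it is odd. Therefore $v_{2}(p^{t}+1) = v_{2}(p+1)$. The hypothesis says $v_{2}(p^{t}+1) \geq n$, so $v_{2}(p+1) \geq n$, which is exactly $p \equiv -1 \pmod{2^{n}}$.

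Alternatively, for odd $t$ one could apply Lemma \ref{LiftingTheExponent} with $x = -p$ and $y = 1$. It requires $v_{2}(-p-1) > 1$, i.e. $4 \mid p+1$, and this holds because $p \equiv -1 \pmod 4$ follows from $p^{t} \equiv -1 \pmod 4$ with $t$ odd. The lemma then gives $v_{2}((-p)^{t} - 1) = v_{2}(p+1) + v_{2}(t) = v_{2}(p+1)$. The elementary factorization above is simpler, so I would use it. The only point that needs care is the parity step ruling out even $t$, together with the bookkeeping for $t \leq 0$ and $n = 1$.
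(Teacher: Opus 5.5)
Your proof is correct, but it takes a different route from the paper.

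The paper dismisses $n \le 2$ as obvious. For $n \ge 3$ it uses the structure $(\Z/2^{n}\Z)^{\times} = \langle -1 \rangle \times \langle 5 \rangle \cong \Z/2\Z \times \Z/2^{n-2}\Z$. Writing $p \equiv (-1)^{a}5^{b}$, the congruence $(-1)^{at}5^{bt} \equiv -1$ splits along the two factors: $at$ must be odd and $2^{n-2} \mid bt$. Hence $t$ is odd, $b = 0$ and $a = 1$.

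You avoid the structure theorem entirely. A mod $4$ check rules out even $t$. Then the factorization $p^{t}+1 = (p+1)(p^{t-1} - \cdots + 1)$, whose cofactor is odd, gives $v_{2}(p^{t}+1) = v_{2}(p+1)$.

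What the paper's route buys is uniformity. Working inside the group handles every $t \in \Z$, including $t \le 0$, without the separate bookkeeping you need for $t<0$, $t=0$ and $n=1$.

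What your route buys is self-containment and a sharper result. The paper's argument silently relies on $5$ having order exactly $2^{n-2}$ modulo $2^{n}$ and on $-1 \notin \langle 5 \rangle$. Yours needs only parity. It also proves the exact identity $v_{2}(p^{t}+1) = v_{2}(p+1)$ for odd $t$, which is in the spirit of the paper's Lemma \ref{LiftingTheExponent}. Your alternative via that lemma with $x=-p$, $y=1$ is also valid: the hypothesis $4 \mid p+1$ does follow from $p \equiv p^{t} \pmod 4$ for odd $t$.
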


\begin{proof}  The case $ n = 1 $ or $ 2 $ is obvious. Now assume $ n \geq 3. $
Then $ (\Z/2^{n}\Z)^{\times} = \left< -1 \right> \times \left< 5 \right> \cong  \Z/2\Z \times \Z/2^{n-2} \Z. $
We have $ p \equiv (-1)^{a} 5^{b} (\text{\rm mod} \ 2^{n}) $ for some $ a \in \{0, 1 \} $
and $ b \in \{0, 1, \cdots, 2^{n-2} - 1\}. $ By assumption,
$ p^{t} \equiv -1 (\text{\rm mod} \ 2^{n}) $ holds for some integer $ t \in \Z, $ so
$ (-1)^{at} 5^{bt} \equiv -1 (\text{\rm mod} \ 2^{n}). $ If $ b \neq 0, $ then $ 2 \nmid t, $
and $ 2^{n-2} \mid bt, $ so $ 2^{n-2} \mid b. $ But $ v_{2}(bt) = v_{2}(b) < n-2. $
A contradiction! Hence $ b = 0, $ so by $ (-1)^{at} 5^{bt} \equiv -1 (\text{\rm mod} \ 2^{n}), $
we have $ a = 1. $ Therefore $ p \equiv (-1)^{a} \equiv -1 (\text{\rm mod} \ 2^{n}). $
The proof is completed.
\end{proof}
\begin{lemma} [\cite{Ma}, Chapter 4, Exercise 12, p.83] Let $ m \in \Z_{\geq 1}, $ and
$ \zeta _{m} = e^{2\pi i/m} $ be a $ m$-th root of unity. Let $ G =
\text{\rm Gal}(\Q(\zeta _{m})/ \Q) \cong (\Z / m \Z)^{\times } $
be the Galois group of the cyclotomic extension $ \Q(\zeta _{m})/ \Q. $ Let $ K \leq
\Q(\zeta _{m}) $ be a subfield, and $ H = \text{\rm Gal}(\Q(\zeta _{m})/ K). $ We view
$ H $ as a subgroup of $ (\Z / m \Z)^{\times }. $ Let $ p \nmid m $ be a prime number,
$ \mathfrak{p} $ and $ \mathfrak{P} $ be any places of $ K $ and $ \Q(\zeta _{m}), $
respectively, such that $ \mathfrak{P} | \mathfrak{p} | p. $ Then the degree of residual class
$$ f(\mathfrak{p}/p) = \min \{f \in \Z_{\geq 1} : \ p^{f} \text{\rm mod} \ m \in H \}. $$
\end{lemma}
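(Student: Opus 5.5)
We prove the statement by passing to the full cyclotomic field, where everything is explicit, and then descending to $K$.

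First, since $p \nmid m$, the prime $p$ is unramified in $\Q(\zeta_m)$. The decomposition group $D_{\mathfrak{P}} \subseteq G$ of $\mathfrak{P}$ over $p$ is therefore cyclic, generated by the Frobenius element $\mathrm{Frob}_{\mathfrak{P}}$. Under the identification $G \cong (\Z/m\Z)^{\times}$ this element is the residue class $p \bmod m$, because $\sigma_p(\zeta_m) = \zeta_m^{p} \equiv \zeta_m^{N(p)} \pmod{\mathfrak{P}}$ and the roots of unity of order $m$ are distinct modulo $\mathfrak{P}$ (as $p \nmid m$). Consequently $D_{\mathfrak{P}} = \langle p \bmod m \rangle$, and $f(\mathfrak{P}/p)$ is the order of $p$ in $(\Z/m\Z)^{\times}$.

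Next, we descend to $K$ using the standard behaviour of decomposition groups in a tower. Set $\mathfrak{p} = \mathfrak{P} \cap K$. The decomposition group of $\mathfrak{P}$ over $\mathfrak{p}$ in the extension $\Q(\zeta_m)/K$ is $D_{\mathfrak{P}} \cap H$. Since the extension is unramified at $\mathfrak{P}$,
\[ f(\mathfrak{P}/\mathfrak{p}) = |D_{\mathfrak{P}} \cap H|. \]
Multiplicativity of residue degrees then gives
\[ f(\mathfrak{p}/p) = \frac{|D_{\mathfrak{P}}|}{|D_{\mathfrak{P}} \cap H|} = [D_{\mathfrak{P}} : D_{\mathfrak{P}} \cap H], \]
which is the order of the image of $D_{\mathfrak{P}}$ in $G/H$.

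Finally, $D_{\mathfrak{P}}$ is cyclic generated by $p$, so its image in $G/H$ is cyclic generated by the coset $pH$. The order of this image is the least $f \geq 1$ with $p^{f} \in H$, which is the claimed formula. Nothing depends on the choice of $\mathfrak{P}$ or $\mathfrak{p}$: $G$ is abelian, so all decomposition groups above $p$ coincide.

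The only nontrivial step is identifying Frobenius with $p \bmod m$; the rest is routine group theory of decomposition groups in Galois towers. An alternative, which the citation to \cite{Ma} suggests, is to argue via the fixed field $Z$ of $D_{\mathfrak{P}}$ and count: $p$ splits completely in $Z$ and is inert in $\Q(\zeta_m)/Z$.
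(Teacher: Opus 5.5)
Your proof is correct and is essentially the paper's argument: both identify the Frobenius of $\mathfrak{P}$ with $p \bmod m$, so $D_{\mathfrak{P}}=\langle p\rangle$, and then compute $f(\mathfrak{p}/p)=[D_{\mathfrak{P}}:D_{\mathfrak{P}}\cap H]$, which is the order of $pH$ in $G/H$. The only difference is bookkeeping: the paper gets this index as the size of a $\langle\phi\rangle$-orbit on $H\backslash G$ and cites Marcus's Theorem 33 to equate orbit size with residue degree, while you use $D(\mathfrak{P}/\mathfrak{p})=D_{\mathfrak{P}}\cap H$ together with multiplicativity of residue degrees, which is slightly more self-contained.
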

\begin{proof} By assumption, $ \mathfrak{p} $ is unramified in $ K/\Q. $ By definition,
$ \phi (\zeta _{m}) = \zeta _{m}^{p} \text{\rm mod} \mathfrak{P}, $ where $ \phi =
\left(\frac{\Q(\zeta _{m})/ \Q}{\mathfrak{p}} \right) $ is the Frobenius element. So
$ \phi \equiv p \in (\Z / m \Z)^{\times }. $ The decomposition
group $ D_{\mathfrak{P}} = \left<\phi \right> $ acts on the quotient space $ H \backslash G $ of
right cosets via $ (Hg) \ast \phi = H(g \phi) \ (\forall g \in G). $ Write $ f_{0} =
\min \{f \in \Z_{\geq 1} : \ p^{f} \text{\rm mod} \ m \in H \}. $ For any $ g \in G, $ let
$ (Hg) \ast \phi ^{k} = Hg, $ i.e., $ (Hg)p^{k} = Hg. $
So the stabilized subgroup $ \text{Stab}_{D_{\mathfrak{P}}}(Hg) = \left<f_{0} \right> \subset
\left< p \right>. $
Hence $ \sharp D_{\mathfrak{P}} / \sharp \text{Stab}_{D_{\mathfrak{P}}}(Hg) = f_{0}. $
Then by \cite[Theorem 33]{Ma}, $ f(\mathfrak{p}/p) = f_{0}. $ The proof is completed.
\end{proof}

Let $ \Q_{\infty} = \bigcup _{n \geq 0} \Q_{n} $ be the cyclotomic $ \Z_{2}$-extension
of $ \Q $ with Galois group $ \text{\rm Gal}(\Q_{\infty} / \Q) \cong \Z_{2}, $ the additive
group of $ 2$-adic integers, where, for $ n \geq 0, \Q = \Q_{0} \subset \Q_{1} \subset
\cdots \subset \Q_{n} \subset \cdots. $ It is not difficult to see that $ \Q_{n} =
\Q (\zeta_{2^{n+2}} + \zeta_{2^{n+2}}^{-1}). $ Let $ h_{n} $ denote the class number of
$ \Q_{n}, 2^{e_{n}} $ the exact power of $ 2 $ dividing $ h_{n}. $ Then by Iwasawa's theorem,
there exist integers $ \lambda_{2}, \mu_{2} $ and $ \nu_{2}, $ depending only on
$ \Q_{\infty} / \Q, $ such that $ e_{n} = \lambda_{2} n + \mu_{2} \cdot 2^{n} + \nu_{2} $
for $ n \gg 0. $
\begin{corollary} Let $ p $ be an odd prime number. Denote $ f_{2^{n}}(p) =
\min \{f \in \Z_{\geq 1} : p^{f} \equiv \pm 1 (\text{\rm mod} \ 2^{n})\}. $ Then \
$ f_{2^{n}}(p) =  \begin{cases} 1,& \text{if} \ n < v_{2}(p^{2}-1),  \\
2^{n- v_{2}(p^{2}-1)+1}, & \text{if} \ n \geq v_{2}(p^{2}-1).
\end{cases} $  \\
Moreover, the residue degree of $ p $ in the extension $ \Q_{n} / \Q $ is equal to
$ f_{2^{n+2}}(p). $
\end{corollary}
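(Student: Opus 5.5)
We split the Corollary into its two assertions and handle each with the lemmas just proved.

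\emph{The formula for $f_{2^{n}}(p)$.} By the third lemma above, if $p^{f} \equiv -1 \pmod{2^{n}}$ for some $f$, then already $p \equiv -1 \pmod{2^{n}}$, so $f=1$ attains the minimum. Hence $f_{2^{n}}(p) = 1$ when $p \equiv -1 \pmod{2^n}$, and otherwise $f_{2^{n}}(p) = r_{2^{n}}(p)$.

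We then compare with the second lemma by cases on $n$.
\begin{enumerate}[(a)]
\item If $n \le v_2(p-1)$, then $p\equiv 1$, so $f_{2^n}(p)=1$.
\item If $v_2(p-1) < n \le v_2(p^2-1)$, then $r_{2^n}(p)=2$, but we must check whether $p \equiv -1 \pmod{2^n}$. Since $v_2(p^2-1)=v_2(p-1)+v_2(p+1)$ and one of these equals $1$, there are two subcases.
\begin{enumerate}[(i)]
\item If $v_2(p-1)=1$, then $p\equiv -1 \pmod{2^{v_2(p+1)}}$, and $v_2(p+1)=v_2(p^2-1)-1$. So $f_{2^n}(p)=1$ for $n\le v_2(p^2-1)-1$. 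For $n=v_2(p^2-1)$ we have $p\not\equiv\pm1$ and $p^2\equiv 1$, giving $2 = 2^{n-v_2(p^2-1)+1}$.
\item If $v_2(p-1)\ge 2$, then $v_2(p+1)=1$ and $v_2(p^2-1)=v_2(p-1)+1$. The range $v_2(p-1)<n\le v_2(p^2-1)$ is then just $n=v_2(p^2-1)$, where $p\not\equiv\pm1$ and we get $2$.
\end{enumerate}
\item If $n>v_2(p^2-1)$, then $p\not\equiv -1 \pmod{2^n}$, since $v_2(p+1)<n$. So $f_{2^n}(p)=r_{2^n}(p)=2^{n-v_2(p^2-1)+1}$.
\end{enumerate}
Assembling the cases gives exactly the two-case formula stated, with the boundary $n=v_2(p^2-1)$ falling in the second case. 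Small $n$ such as $n\le 2$ fit into case (a) or (b)(i) and need no separate treatment. The only delicate point in this part is this boundary bookkeeping.

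\emph{The residue degree.} Apply the cyclotomic residue-degree lemma (\cite{Ma} exercise) with $m=2^{n+2}$ and $K=\Q_n=\Q(\zeta_{2^{n+2}}+\zeta_{2^{n+2}}^{-1})$, which is a subfield of $\Q(\zeta_{2^{n+2}})$. Since $p\nmid m$, the lemma applies. The group $H=\mathrm{Gal}(\Q(\zeta_{2^{n+2}})/\Q_n)$ is generated by complex conjugation, which corresponds to $\{\pm1\}\subset(\Z/2^{n+2}\Z)^\times$. The lemma then gives the residue degree as $\min\{f\ge1: p^f\equiv\pm1 \pmod{2^{n+2}}\} = f_{2^{n+2}}(p)$, as claimed.

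Here the step needing care is the identification of $H$ with $\{\pm 1\}$. This holds because $\Q_n$ is the maximal real subfield of $\Q(\zeta_{2^{n+2}})$, of index $2$, fixed exactly by $\zeta\mapsto\zeta^{-1}$.
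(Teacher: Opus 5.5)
Your proof is correct and follows the same route as the paper. The lemma that $p^{t}\equiv -1 \pmod{2^{n}}$ forces $p\equiv -1 \pmod{2^{n}}$ reduces $f_{2^{n}}(p)$ to either $1$ or the order $r_{2^{n}}(p)$, the identity $v_{2}(p^{2}-1)=\max\{v_{2}(p-1),v_{2}(p+1)\}+1$ settles the boundary cases, and the cyclotomic residue-degree lemma with $H=\langle -1\rangle$ gives the second assertion; you simply spell out the case bookkeeping that the paper compresses into ``the conclusion follows from the above Lemmas.''
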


\begin{proof} Since $ \text{\rm Gal}(\Q (\zeta_{2^{n+2}}) / \Q_{n}) = \left<-1 \right> \subset
(\Z/2^{n+2} \Z)^{\times} \cong \text{\rm Gal}(\Q (\zeta_{2^{n+2}}) / \Q) $ and
$ v_{2}(p^{2} - 1) - \max \{v_{2}(p - 1), v_{2}(p + 1)\} = 1, $ the conclusion follows
from the above Lemmas.
\end{proof}
\begin{corollary}\label{DecompositionsOfOddPrimeInQn}
Let $ p $ be an odd prime number. Denote $$ g_{n}(p) =
\sharp \{\mathfrak{p} : \mathfrak{p} \ \text{is a prime ideal of} \ \Q_{n} \
\text{and} \ \mathfrak{p} | p \}. $$ Then \
$ g_{n}(p) =  \begin{cases} 2^{n},  & \text{if} \ n \leq v_{2}(p^{2}-1) - 3,  \\
2^{v_{2}(p^{2}-1)-3}, & \text{if} \ n > v_{2}(p^{2}-1) - 3.
\end{cases}  $
\end{corollary}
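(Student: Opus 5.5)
Since $\Q_{n}/\Q$ is Galois of degree $2^{n}$ and the odd prime $p$ is unramified in it ($\Q_{n} \subset \Q(\zeta_{2^{n+2}})$ and $p \nmid 2^{n+2}$), the fundamental identity $efg = [\Q_{n}:\Q]$ with $e = 1$ gives
$$ g_{n}(p) = \frac{2^{n}}{f(\mathfrak{p}/p)}. $$
By the previous corollary, the residue degree $f(\mathfrak{p}/p)$ equals $f_{2^{n+2}}(p)$. So the plan is simply to substitute the explicit value of $f_{2^{n+2}}(p)$ and split into the two cases according to whether $n+2 < v_{2}(p^{2}-1)$ or $n+2 \geq v_{2}(p^{2}-1)$.

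In the first case, $n + 2 < v_{2}(p^{2}-1)$, i.e.\ $n \leq v_{2}(p^{2}-1) - 3$. Here $f_{2^{n+2}}(p) = 1$, so $p$ splits completely and $g_{n}(p) = 2^{n}$.

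In the second case, $n \geq v_{2}(p^{2}-1) - 2$, i.e.\ $n > v_{2}(p^{2}-1) - 3$. Here $f_{2^{n+2}}(p) = 2^{(n+2) - v_{2}(p^{2}-1) + 1}$, and hence
$$ g_{n}(p) = 2^{\,n - (n + 3 - v_{2}(p^{2}-1))} = 2^{v_{2}(p^{2}-1) - 3}. $$

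There is no real obstacle: the only things to check are the off-by-one in translating $n+2$ against $v_{2}(p^{2}-1)$, and that the boundary value $n = v_{2}(p^{2}-1) - 2$ falls under the second formula of the previous corollary. The two formulas also agree at the junction $n = v_{2}(p^{2}-1) - 3$, which is consistent with $g_{n}(p)$ stabilizing once the residue degree starts to grow. Note that $v_{2}(p^{2}-1) \geq 3$ for odd $p$, so the exponent $v_{2}(p^{2}-1) - 3$ is nonnegative.
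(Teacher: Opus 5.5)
Your proposal is correct and is the argument the paper leaves as ``obvious.'' You use that $p$ is unramified in the Galois extension $\Q_{n}/\Q$, so $g_{n}(p) = 2^{n}/f$, and you substitute the residue degree $f = f_{2^{n+2}}(p)$ from the preceding corollary; your case split $n+2 < v_{2}(p^{2}-1)$ versus $n+2 \geq v_{2}(p^{2}-1)$, including the boundary value $n = v_{2}(p^{2}-1)-2$, is handled correctly.
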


\begin{proof} Obvious.
\end{proof}

\begin{proposition}\label{SplitBiQuadra}
Let $ p $ be an odd prime number, $ F $ a number field, $ K $ and $ K^{\prime } $
be any two different quadratic extensions of $ F. $ If every prime of $ F $ above $ p $ is inert in both
$ K $ and $ K^{\prime }, $ then all the primes above $ p $ split completely in $ KK^{\prime } / K. $
\end{proposition}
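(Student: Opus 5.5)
Fix a prime $\mathfrak{p}$ of $F$ above $p$ and set $L = KK'$, a biquadratic extension of $F$ (since $K \neq K'$) with Galois group $\text{\rm Gal}(L/F) \cong \Z/2\Z \times \Z/2\Z$. Its three quadratic subextensions are $K$, $K'$ and a third field $K''$. The claim is that every prime $\mathfrak{P}_K$ of $K$ above $\mathfrak{p}$ splits in $L/K$. Equivalently, the decomposition group $D \subseteq \text{\rm Gal}(L/F)$ of a prime $\mathfrak{P}$ of $L$ above $\mathfrak{p}$ meets $\text{\rm Gal}(L/K)$ trivially, because the decomposition group of $\mathfrak{P}$ over $K$ is $D \cap \text{\rm Gal}(L/K)$. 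The plan is to show that $D$ has order exactly $2$ and equals $\text{\rm Gal}(L/K'')$.

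First, $\mathfrak{p}$ is unramified in $L/F$. Since $\mathfrak{p}$ is inert in $K/F$ and in $K'/F$, it is unramified in both, and the compositum of unramified extensions is unramified at $\mathfrak{p}$. (Oddness of $p$ is not actually needed once inertness is given.) Hence the inertia group is trivial and $D$ is cyclic, generated by the Frobenius element $\phi$. A Klein four-group has no cyclic subgroup of order $4$, so $\sharp D \leq 2$.

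Next, since $\mathfrak{p}$ is inert in $K$, the residue degree of $\mathfrak{p}$ in $K/F$ is $2$, so $f(\mathfrak{P}/\mathfrak{p}) \geq 2$. Together with $\sharp D \leq 2$ this forces $\sharp D = 2$, i.e.\ $D = \langle \phi \rangle$ with $\phi \neq 1$. The restriction of $\phi$ to $K$ is the Frobenius of $\mathfrak{p}$ in $K/F$, which is nontrivial because $\mathfrak{p}$ is inert; hence $\phi \notin \text{\rm Gal}(L/K)$. By the same argument with $K'$, $\phi \notin \text{\rm Gal}(L/K')$. The only remaining nontrivial element of $\text{\rm Gal}(L/F)$ is the generator of $\text{\rm Gal}(L/K'')$, so $D = \text{\rm Gal}(L/K'')$.

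Therefore $D \cap \text{\rm Gal}(L/K) = 1$, so each prime of $K$ above $\mathfrak{p}$ has trivial decomposition group in $L/K$. Since $L/K$ is Galois of degree $2$, each such prime splits into two primes of $L$. As $\mathfrak{p}$ was an arbitrary prime of $F$ above $p$, all primes of $K$ above $p$ split completely in $KK'/K$. The only point needing care is the step $\sharp D \leq 2$, which rests on cyclicity of $D$ and hence on first establishing that $\mathfrak{p}$ is unramified in the compositum $L$.
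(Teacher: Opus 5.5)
Your proposal is correct and follows essentially the paper's argument: $\mathfrak{p}$ is unramified in $KK'$, so its decomposition group is cyclic, generated by a Frobenius element of $\mathrm{Gal}(K/F)\times\mathrm{Gal}(K'/F)$, and that element has order exactly $2$. The paper then concludes by counting (two primes of $KK'$ above $\mathfrak{p}$ against the single inert prime of $K$), whereas you identify $D=\mathrm{Gal}(L/K'')$ and note $D\cap\mathrm{Gal}(L/K)=1$; this difference is cosmetic, and your version also supplies the justification of the order-$2$ claim that the paper leaves as ``easy to see''.
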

\begin{proof} For any prime ideal $ \mathfrak{p} $ in $ F $ above $ p, $ by assumption,
$ \mathfrak{p} $ is inert in both $ K $ and $ K^{\prime }. $ So $ \mathfrak{p} $ is unramified in
$ KK^{\prime}. $ It is easy to see that the Frobenius element
$ \left(\frac{KK^{\prime } / F}{\mathfrak{p}} \right) \in \text{\rm Gal}(K/F) \times \text{\rm Gal}(K^{\prime}/F) $
is of order $ 2. $ So the decomposition group of $ \mathfrak{p} $ in $ KK^{\prime } / F $ consists
of only two elements, this shows that there are only two different prime ideals in $  KK^{\prime } $
above $ \mathfrak{p}, $ and we are done.
\end{proof}

\begin{corollary} Let $ p $ be an odd prime number, and $ d $ a square-free integer with
$ p \nmid d. $
\begin{enumerate}[$1)$]
\item For any integer $ n > v_{2}(p^{2} - 1) - 3, $ all the prime ideals above $ p $ split completely
in $ \Q_{n}(\sqrt{d})/ \Q_{n}. $			
\item If $ \left(\frac{d}{p} \right) = 1, $ then for any positive integer $ n, $ all the prime ideals above
$ p $ split completely in $ \Q_{n}(\sqrt{d})/ \Q_{n}. $ 			
\item If $ \left(\frac{d}{p} \right) = -1, $ then for any positive integer $ n \leq v_{2}(p^{2} - 1) - 3, $
all the prime ideals above $ p $ are inert in $ \Q_{n}(\sqrt{d})/ \Q_{n}. $
		\end{enumerate}
\end{corollary}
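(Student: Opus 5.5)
We work with the biquadratic-type compositum $\Q_{n}(\sqrt{d}) = \Q_{n}\cdot\Q(\sqrt{d})$ over $\Q_{n}$. Since $p\nmid d$ and $p$ is odd, $p$ is unramified in $\Q(\sqrt{d})$, and $p$ is unramified in $\Q_{n}\subset\Q(\zeta_{2^{n+2}})$. Hence every prime of $\Q_{n}$ above $p$ is unramified in $\Q_{n}(\sqrt{d})/\Q_{n}$, so it either splits or is inert. If $\sqrt{d}\in\Q_{n}$ there is nothing to prove, since the extension is trivial; in parts 2) and 3) this case cannot interfere anyway, because $\Q_{n}$ contains only $\sqrt{2}$ among quadratic fields. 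So we may assume $[\Q_{n}(\sqrt{d}):\Q_{n}]=2$. In the abelian extension $\Q_{n}(\sqrt{d})/\Q$, the residue degree of $p$ is the order of the Frobenius $\phi_{p}\in\mathrm{Gal}(\Q_{n}/\Q)\times\mathrm{Gal}(\Q(\sqrt{d})/\Q)$. The two components of $\phi_{p}$ are the Frobenius of $p$ in $\Q_{n}$, of order $f_{2^{n+2}}(p)$ by the first Corollary, and the Frobenius of $p$ in $\Q(\sqrt{d})$, of order $1$ or $2$ according as $\left(\frac{d}{p}\right)=1$ or $-1$. The primes of $\Q_{n}$ above $p$ split in $\Q_{n}(\sqrt{d})$ if and only if this order equals $f_{2^{n+2}}(p)$, and are inert if and only if it is $2f_{2^{n+2}}(p)$.

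For 2): if $\left(\frac{d}{p}\right)=1$, then $p$ splits in $\Q(\sqrt{d})$, so the second component is trivial. The order of $\phi_{p}$ is then the order of its first component, so the primes above $p$ split. Alternatively, $\mathfrak{p}$ splits in $\Q(\sqrt d)$ and hence its residue degree does not grow.

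For 1) and 3), compute $f_{2^{n+2}}(p)$ using the Corollary, applied with $n+2$ in place of $n$. If $n+2<v_{2}(p^{2}-1)$, i.e. $n\le v_{2}(p^{2}-1)-3$, then $f=1$. So when $\left(\frac{d}{p}\right)=-1$, $\phi_{p}$ has order $2=2f$, and the primes are inert; this proves 3). This is also consistent with Corollary \ref{DecompositionsOfOddPrimeInQn}, which gives $g_{n}(p)=2^{n}$, so that $p$ splits completely in $\Q_{n}$. If $n>v_{2}(p^{2}-1)-3$, then $f=2^{n+2-v_{2}(p^{2}-1)+1}\ge 2$ is even. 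The order of $\phi_{p}=(\sigma,\tau)$ is $\mathrm{lcm}(\mathrm{ord}\,\sigma,\mathrm{ord}\,\tau)$, and since $\mathrm{ord}\,\tau\mid 2$ divides $f$, this lcm equals $f$. Hence the residue degree of $p$ in $\Q_{n}(\sqrt{d})$ equals that in $\Q_{n}$, and the primes above $p$ split, which gives 1).

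An alternative route for 1), closer to Proposition \ref{SplitBiQuadra}, is the following. Once $n\ge v_{2}(p^{2}-1)-2$, the primes of $\Q_{n-1}$ above $p$ are inert in $\Q_{n}$ (since $g$ stabilizes), and one can apply that Proposition to $\Q_{n}$ and $\Q_{n-1}(\sqrt{d})$ over $\Q_{n-1}$, in the case where $p$ is inert in the latter. The lcm argument above is cleaner, however, so I would present that.

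The main point needing care is justifying that the splitting behaviour in $\Q_{n}(\sqrt{d})/\Q_{n}$ is detected by the order of the Frobenius in the product group. This uses the fact that all primes above $p$ are conjugate and the Galois group is abelian, so the residue degree is $\mathrm{ord}(\phi_{p})$. It also uses the identification of $\mathrm{Gal}(\Q_{n}(\sqrt d)/\Q)$ with a product, which requires $\Q_{n}\cap\Q(\sqrt d)=\Q$; this disjointness is exactly what the trivial case handled at the start takes care of.
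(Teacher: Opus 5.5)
Your proof is correct, and your main route differs from the paper's. The paper's proof is a one-line citation of three ingredients: the Gras exercise on decomposition in composita, the count $g_n(p)$ of Corollary \ref{DecompositionsOfOddPrimeInQn}, and Proposition \ref{SplitBiQuadra}. Spelled out, the argument runs as follows:
\begin{enumerate}[$1)$]
\item The stabilization $g_{n-1}(p)=g_n(p)$ for $n>v_2(p^2-1)-3$ makes the primes of $\Q_{n-1}$ above $p$ inert in $\Q_n$. One then applies Proposition \ref{SplitBiQuadra} over $\Q_{n-1}$ to $\Q_n$ and $\Q_{n-1}(\sqrt d)$, or base change if $p$ splits there. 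This is precisely the alternative you sketch.
\item The splitting of $p$ in $\Q(\sqrt d)$ is carried over by base change.
\item One uses $g_n(p)=2^n$, so the residue degree in $\Q_n$ is $1$. Since $p$ is inert in $\Q(\sqrt d)$, it cannot split completely in $\Q_n(\sqrt d)$.
\end{enumerate}
Your lcm computation in $\mathrm{Gal}(\Q_n/\Q)\times\mathrm{Gal}(\Q(\sqrt d)/\Q)$ instead gets the residue degree of $p$ in $\Q_n(\sqrt d)$ in one step from the residue-degree formula of the first Corollary. It treats all three parts uniformly and avoids the inert/split case distinction in 1). The paper's version is more modular. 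It reuses Proposition \ref{SplitBiQuadra}, whose proof is the same Frobenius-order observation carried out in $(\Z/2\Z)^2$, and the count $g_n(p)$, both of which the paper needs later anyway.

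One point to tighten: in 3) the degenerate case $\sqrt d\in\Q_n$ is not ``nothing to prove.'' A trivial extension would contradict inertness, so you must show this case cannot occur. The fact that $\Q(\sqrt 2)$ is the only quadratic subfield of $\Q_n$ is not enough by itself; it only gives $d\in\{1,2\}$. The value $d=1$ has $\left(\frac{1}{p}\right)=1$, so it is excluded. For $d=2$, the condition $\left(\frac{2}{p}\right)=-1$ forces $p\equiv\pm 3\pmod 8$, hence $v_2(p^2-1)=3$. Then there is no positive $n\le v_2(p^2-1)-3=0$, so part 3) is vacuous in this case.
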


\begin{proof} Follows directly from \cite[Chapter II, Exercise 1.2.5]{Gra},
Corollary \ref{DecompositionsOfOddPrimeInQn} and Proposition \ref{SplitBiQuadra}.
\end{proof}

\begin{corollary}\label{QuadraticExtofQInfiniteDecom}
Let $ p $ be an odd prime number, $ d_{1}, \cdots, d_{r} $ are
square-free integers, $ r \geq 1. $
\begin{enumerate}[$1)$]
\item All the prime ideals above $ p $ split completely
in $ \Q_{\infty }(\sqrt{d_{1}}, \sqrt{d_{2}}, \cdots, \sqrt{d_{r}})/
\Q_{\infty }(\sqrt{d_{2}}, \cdots, \sqrt{d_{r}}) $ if $ p \nmid d_{1}. $
\item All the prime ideals above $ p $ split completely
in $ \Q_{\infty }(\sqrt{d_{1}}, \sqrt{d_{2}}, \cdots, \sqrt{d_{r-1}})/\Q_{\infty } $
if $ p \nmid \prod_{i=1}^{r-1} d_{i}. $
\end{enumerate}
\end{corollary}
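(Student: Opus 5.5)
The plan is to reduce both statements to the decomposition behaviour of $p$ in the finite layers $\Q_{n}$, where Corollary \ref{DecompositionsOfOddPrimeInQn} shows that the number of primes above $p$ stabilises. The key consequence is that for every $n > v_{2}(p^{2}-1) - 3$ each prime of $\Q_{n}$ above $p$ is inert in $\Q_{n+1}/\Q_{n}$. Equivalently, the decomposition group of $p$ in $\Q_{\infty}/\Q$ is an open subgroup of $\Z_{2}$, hence isomorphic to $\Z_{2}$, and every prime $\mathfrak{p}$ of $\Q_{\infty}$ above $p$ has an infinite pro-$2$ residue field extension: the completion of $\Q_{\infty}$ at $\mathfrak{p}$ is the unramified $\Z_{2}$-extension of some finite unramified extension of $\Q_{p}$.

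For part 1), I would work locally. Since $p \nmid d_{1}$, $\mathfrak{P}$ is unramified in $\sqrt{d_{1}}$. Fix a prime $\mathfrak{P}$ of $\Q_{\infty}(\sqrt{d_{2}}, \cdots, \sqrt{d_{r}})$ above $p$. It suffices to show that $d_{1}$ is a square in the completion at $\mathfrak{P}$, because then $\mathfrak{P}$ splits in the quadratic (or trivial) extension obtained by adjoining $\sqrt{d_{1}}$.

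The completion at $\mathfrak{P}$ contains the completion of $\Q_{\infty}$ at the prime below, and the residue field of the latter is an infinite algebraic extension of $\F_{p}$ of degree divisible by $2^{\infty}$. That residue field therefore contains $\F_{p^{2}}$, in which the unit $d_{1}$ is a square. By Hensel's lemma (with $p$ odd), $d_{1}$ is a square in the completion. This gives 1).

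As an alternative, finite-level argument, I could take $n$ large, use part 1) of the previous corollary to get splitting in $\Q_{n}(\sqrt{d_{1}})/\Q_{n}$, and then pass to the compositum. Splitting is preserved under base change: if $\mathfrak{p}$ splits in $L/F$, then primes above it split in $LM/M$.

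For part 2), I would apply 1) repeatedly. Since $p \nmid d_{i}$ for each $i \leq r-1$, 1) applied to the pair of fields
$$\Q_{\infty}(\sqrt{d_{1}}, \cdots, \sqrt{d_{i}}) \supseteq \Q_{\infty}(\sqrt{d_{1}}, \cdots, \sqrt{d_{i-1}})$$
shows that every prime above $p$ splits completely in each step of the tower. Complete splitting is transitive in towers, so it holds in $\Q_{\infty}(\sqrt{d_{1}}, \cdots, \sqrt{d_{r-1}})/\Q_{\infty}$.

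The only delicate point is the claim that the residue degree becomes even, i.e. that inertia in $\Q_{n+1}/\Q_{n}$ persists for all large $n$. This needs the computation in Corollary \ref{DecompositionsOfOddPrimeInQn}: $g_{n}(p)$ is eventually constant while $[\Q_{n}:\Q] = 2^{n}$ and $p$ is unramified, so the residue degree $2^{n}/g_{n}(p)$ grows without bound.
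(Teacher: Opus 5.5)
Your argument is correct and is essentially the paper's: the paper just says the result follows by induction on $r$ from how splitting behaves under base change to a compositum (Gras, Ch.~II, Exercise~1.2.5), together with the splitting of primes above $p$ in $\Q_n(\sqrt{d})/\Q_n$ for large $n$, which is exactly your ``alternative'' finite-level route. Your local version (the residue field at a prime of $\Q_\infty$ above $p$ contains $\F_{p^2}$, then Hensel) only spells out the step the paper leaves to the citation, and you correctly justify that the residue degree becomes even from the stabilisation of $g_n(p)$.
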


\begin{proof} By induction on $ r, $ it follows directly from \cite[Chapter II, Exercise 1.2.5]{Gra}.
\end{proof}

\begin{lemma}\label{Lemma 2.10} Let $ F $ be a real abelian number field,
$ d, d_{1}, \cdots, d_{r} $ are square-free integers. $ r \geq 1. $ Let
$ F(\sqrt{d_{1}}, \cdots, \sqrt{d_{r}}, \sqrt{d}) /F(\sqrt{d_{1}}, \cdots, \sqrt{d_{r}}) $
be an extension of degree $ 2^{r+1}. $ If the conductor $ \mathfrak{f}_{F/\Q} $ of $ F/\Q $
is prime to $ d \prod_{i=1}^{r} d_{i}, $ then for a finite place $ \mathfrak{p} \nmid 2 $ in
$ F(\sqrt{d_{1}}, \cdots, \sqrt{d_{r}}), \mathfrak{p} $ ramifies in
$ F(\sqrt{d_{1}}, \cdots, \sqrt{d_{r}}, \sqrt{d}) $ if and only
if $ \mathfrak{p} | d $ and $ \mathfrak{p} \nmid \prod_{i=1}^{r} d_{i}. $ Moreover,
for such $ \mathfrak{p}, $ let $ p $ be the rational prime number such that
$ \mathfrak{p} | p, $ then all the places over $ p $ split completely in
$  F_{\infty}(\sqrt{d_{1}}, \cdots, \sqrt{d_{r}}) / F_{\infty}, $ where $ F_{\infty} $
is the cyclotomic $ \Z_{2}$-extension.
\end{lemma}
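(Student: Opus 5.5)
We set $M = F(\sqrt{d_{1}}, \cdots, \sqrt{d_{r}})$ and $L = M(\sqrt{d})$. The proof has two parts: the ramification criterion for $\mathfrak{p}$ in $L/M$, and the splitting of the primes over $p$ in $F_{\infty}(\sqrt{d_{1}}, \cdots, \sqrt{d_{r}})/F_{\infty}$.

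\emph{Ramification criterion.} First I would handle the easy direction. Suppose $\mathfrak{p} \nmid 2$ and $\mathfrak{p} \nmid d$. Then $L = M(\sqrt{d})$ with $d$ a $\mathfrak{p}$-unit, and $x^{2} - d$ is separable modulo $\mathfrak{p}$. Hence $\mathfrak{p}$ is unramified in $L$.

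Next suppose $\mathfrak{p} \mid d$ and $\mathfrak{p} \nmid \prod d_{i}$, and let $p$ be the rational prime under $\mathfrak{p}$. Since $p \nmid \mathfrak{f}_{F/\Q}$, the prime $p$ is unramified in $F$. It is also unramified in each $\Q(\sqrt{d_{i}})$, because $p \nmid d_{i}$ and $p$ is odd. So $p$ is unramified in $M/\Q$, and $e(\mathfrak{p}/p) = 1$. On the other hand, $\Q(\sqrt{d}) \subset L$ and $p \| d$ with $p$ odd, so $p$ ramifies in $\Q(\sqrt{d})$. By multiplicativity of ramification indices, $e(\mathfrak{P}/p)$ is even for any $\mathfrak{P} \mid \mathfrak{p}$ in $L$, while $e(\mathfrak{p}/p) = 1$. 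Therefore $\mathfrak{p}$ ramifies in $L$.

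Finally suppose $\mathfrak{p} \mid d$ and $\mathfrak{p} \mid d_{i}$ for some $i$. Then $L = M(\sqrt{d d_{i}/p^{2}})$, and the radicand $d d_{i}/p^{2}$ is a $p$-unit: both $d$ and $d_{i}$ are square-free, so $p$ divides each exactly once. The first step then shows $\mathfrak{p}$ is unramified. The step needing the most care is this reduction, which replaces the generator by one prime to $\mathfrak{p}$, together with the multiplicativity argument in the previous step.

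\emph{Splitting over $F_{\infty}$.} The ``moreover'' part follows from the condition $p \nmid \prod d_{i}$ obtained above. Apply Corollary \ref{QuadraticExtofQInfiniteDecom}(2), with $r+1$ in place of $r$, so that all $d_{1}, \cdots, d_{r}$ appear. This gives that all primes above $p$ split completely in $\Q_{\infty}(\sqrt{d_{1}}, \cdots, \sqrt{d_{r}})/\Q_{\infty}$. Splitting completely is preserved under base change by the compositum with $F_{\infty} \supset \Q_{\infty}$: the decomposition group of a prime of $F_{\infty}(\sqrt{d_{1}}, \cdots, \sqrt{d_{r}})$ over $F_{\infty}$ restricts injectively into the decomposition group over $\Q_{\infty}$, which is trivial. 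Hence all places over $p$ split completely in $F_{\infty}(\sqrt{d_{1}}, \cdots, \sqrt{d_{r}})/F_{\infty}$.
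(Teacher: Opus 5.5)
Your proof is correct and follows the paper's argument essentially step for step. The steps are: unramifiedness for $\mathfrak{p}\nmid 2d$ (the paper's Kummer-theory remark), the rewriting $M(\sqrt{d})=M(\sqrt{dd_i/p^2})$ when $\mathfrak{p}\mid d_i$, the comparison of $e(\mathfrak{p}/p)=1$ with the ramification of $p$ in $\Q(\sqrt{d})$ for the converse, and Corollary \ref{QuadraticExtofQInfiniteDecom}(2) followed by base change to $F_\infty$. Your decomposition-group argument just makes explicit the step the paper delegates to \cite[Chapter II, Exercise 1.2.5]{Gra}, and you also correctly handle the index shift in Corollary \ref{QuadraticExtofQInfiniteDecom}(2) that the paper leaves implicit.
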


\begin{proof} If $ \mathfrak{p} \nmid 2 $ ramifies in
$ F(\sqrt{d_{1}}, \cdots, \sqrt{d_{r}}, \sqrt{d}), $
then by the theory of Kummer extension, $ \mathfrak{p} | d. $ Let $ \mathfrak{p} \bigcap
\mathcal{O}_{F} = \mathfrak{p}_{F}. $ If $ \mathfrak{p}_{F} | d_{i} $ for some
$ i \in \{1, \cdots, r\}, $ then $ \mathfrak{p}_{F} $ is unramified in
$ F(\sqrt{dd_{i}}) / F, $ so all the primes over $ \mathfrak{p}_{F} $ are unramified in
$ F(\sqrt{d_{i}}, \sqrt{d}) / F(\sqrt{d_{i}}), $ which implies that $ \mathfrak{p} $
is unramified in $ F(\sqrt{d_{1}}, \cdots, \sqrt{d_{r}}, \sqrt{d}). $ Conversely,
if $ \mathfrak{p} \nmid 2 \prod_{i=1}^{r} d_{i}, $ then $ \mathfrak{p} $ is unramified
in $ F(\sqrt{d_{1}}, \cdots, \sqrt{d_{r}})/ \Q $ because $ \mathfrak{f}_{F/\Q} $ is
prime to $ d \prod_{i=1}^{r} d_{i}. $ So for $ p $ under $ \mathfrak{p}, $ if $ p $
ramifies in $ \Q(\sqrt{d}) / \Q, $ then $ \mathfrak{p} $ also ramifies in
$ F(\sqrt{d_{1}}, \cdots, \sqrt{d_{r}}, \sqrt{d}). $ Lastly, for such $ p, $ by 2) of
Corollary \ref{QuadraticExtofQInfiniteDecom} above, all primes over $ p $ split completely in
$ \Q_{\infty}(\sqrt{d_{1}}, \cdots, \sqrt{d_{r}}) / \Q_{\infty}, $ hence by
\cite[Chapter II, Exercise 1.2.5]{Gra}, all the places over $ p $ split completely in
$  F_{\infty}(\sqrt{d_{1}}, \cdots, \sqrt{d_{r}}) / F_{\infty}. $
The proof is completed.
\end{proof}

\section{ Iwasawa $ \lambda _{2}$-invariant and Riemann-Hurwitz formula}  

Let $ F $ be a number field, we denote by $ E_{F}, W_{F} $ the group of units, the group of roots of unity of $ F, $
respectively. For a prime number $ p, $ let $ F_{\infty} $ be a $ \Z_{p}$-extension of $ F, $ we denote
$ E_{F_{\infty}} = \bigcup_{n = 1}^{\infty} E_{F_{n}} $ and $ W_{F_{\infty}} = \bigcup_{n = 1}^{\infty} W_{F_{n}}. $
For $ n \in \Z_{\geq 1}, \zeta _{n} = e^{2\pi i/n} $ is a primitive $ n$-th root of unity. For an abelian group
$ A $ and a prime number $ q, $ let $ A(q) $ denote the subgroup of all elements of $ A $ whose order is a power
of $ q. $ \\
Let $ K/k $ be a cyclic extension of number fields with Galois group
$ G = \text{\rm Gal}(K/k) \cong \Z / p \Z, $ and $ K_{\infty}, k_{\infty} $ be the cyclotomic
$ \Z_{p}$-extensions of $ K, k $ respectively.

\begin{theorem}[Iwasawa \cite{Iw1}]\label{IwasawaRiemannHurewitz}
Let $ K_{\infty}/k_{\infty} $ be a cyclic extension of degree $ p, $
and $ K_{\infty}/k_{\infty} $ are unramified at all the infinite places. Then
$$ \lambda_{p}(K) = p \lambda_{p}(k) + \sum_{w \nmid p}
(e(w /v) - 1) + (p - 1) \cdot \chi (G, E_{K_{\infty}}), $$
the sum being taken over all places $ w $ in $ K_{\infty} $ which does not divide $ p, $
and $ v = w \mid_{k_{\infty}}. $ Here for the $ G$-module $ E_{K_{\infty}}, \
\widehat{H}^{i}(G, E_{K_{\infty}}) $ is the $ i$th Tate cohomology, and $ \chi (G, E_{K_{\infty}}) =
\dim_{\F_{p}} \widehat{H}^{0}(G, E_{K_{\infty}}) - \dim_{\F_{p}} \widehat{H}^{1}(G, E_{K_{\infty}}). $
\end{theorem}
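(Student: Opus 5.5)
This is Iwasawa's Riemann--Hurwitz formula (Iwasawa \cite{Iw1}, generalizing Kida \cite{Ki1}). The plan is to follow Iwasawa's Galois-cohomological argument: compare the $p$-parts of the class groups of $K_{\infty}$ and $k_{\infty}$ through Herbrand quotients of $G$-modules attached to $K_{\infty}$. Put $A_{K}=\varinjlim \mathrm{Cl}(K_n)(p)$ and $X_K=\varprojlim \mathrm{Cl}(K_n)(p)$. As in Kida's setting, first reduce to the case $\mu_p(k)=0$; this is a consequence of the hypotheses, or is forced by the conclusion, since $\mu_p(k)=0 \iff \mu_p(K)=0$ for $p$-extensions. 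Then $X_K$ is $\Z_p$-free of rank $\lambda_p(K)$ and $A_K\cong(\Q_p/\Z_p)^{\lambda_p(K)}$ as groups.

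\textbf{Step 1: reduce to Herbrand quotients.} For a finite cyclic group $G$ acting on a $\Z_p$-lattice $M$ of rank $\lambda$, write $M\otimes\Q_p$ as a sum of trivial and augmentation representations with multiplicities $a,b$. Then $\lambda=a+(p-1)b$, and the Herbrand quotient satisfies $\log_p h(G,M)$ proportional to $a-b$, up to sign conventions. Apply this to $X_K$ (equivalently to the divisible module $A_K$, using $h$-multiplicativity for finite kernels) and combine it with $\lambda_p(k)=\mathrm{rank}\,X_K^{G}=a$ up to finite index; this last identification holds because $A_k\to A_K^G$ has finite kernel and cokernel. The result is
\[
\lambda_p(K)=p\,\lambda_p(k)-(p-1)\,\chi(G,A_K),
\]
where $\chi$ denotes the Euler characteristic (difference of $\F_p$-dimensions of $\widehat H^0$ and $\widehat H^1$), with appropriate signs.

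\textbf{Step 2: compute $\chi(G,A_K)$.} Use the exact sequences over $K_\infty$:
\[
1\to E\to K_\infty^\times\to P\to 1, \qquad 0\to P\to I\to \mathrm{Cl}\to 0,
\]
after passing to $p$-parts and to the limit, together with the fact that $\widehat H^i(G,\cdot)$ is finite and additive in exact hexagons. By Hilbert 90, $H^1(G,K_\infty^\times)=0$. Since $k_\infty$ contains the $\Z_p$-extension, every local field at $p$ is deeply ramified, so $\widehat H^0(G,K_\infty^\times)$ is controlled by local norms. The ideal group $I$ is an induced module at each place. Its cohomology is therefore a sum over places $v\nmid p$ of $k_\infty$ of local terms: a ramified place contributes $\Z/p$ in degree $0$, and these counts give $\sum_{w\nmid p}(e(w/v)-1)/(p-1)$. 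The places above $p$ contribute nothing, since they are finitely many and their local norm groups are $p$-divisible. The unit term then yields $\chi(G,E_{K_\infty})$. Assembling everything gives
\[
-\chi(G,A_K)=\sum_{w\nmid p}\frac{e(w/v)-1}{p-1}+\chi(G,E_{K_\infty}),
\]
and substituting this into Step 1 gives the stated formula. The assumption of no ramification at infinite places is used here: it ensures that archimedean places contribute no local cohomology (relevant only for $p=2$).

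\textbf{Main obstacle.} The delicate point is the middle step in the infinite layer. One must verify that $\widehat H^0(G,K_\infty^\times)$, or rather the cokernel of $P^G\to I^G$ after $p$-completion, has exactly the finite size predicted by the local contributions, with the global reciprocity constraint cancelling. This requires showing that the norm residue symbols at primes over $p$ vanish in the $\Z_p$-tower, and that the number of ramified places $v\nmid p$ is finite, because such primes are finitely decomposed in the cyclotomic $\Z_p$-extension. I would handle this as Iwasawa does: prove the identities at finite levels $n$ and pass to the limit, using the stabilization of all the finite cohomology groups for $n\gg0$.
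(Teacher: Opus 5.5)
The paper gives no proof of this statement; it is quoted from Iwasawa. Your Herbrand-quotient outline is Iwasawa's strategy, but two steps fail as written.

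\textbf{The hypothesis $\mu=0$.} $\mu_p(k)=0$ is not a consequence of the hypotheses (in general it is Iwasawa's open $\mu$-conjecture), and ``forced by the conclusion'' is circular. It is an explicit hypothesis of Iwasawa's theorem that the version quoted here omits. Your argument cannot do without it: if $\mu_p(K)>0$, then $A_K$ is not cofinitely generated over $\Z_p$, and the lattice/Herbrand-quotient argument of Step 1 breaks down. You must add it as an assumption. In all of the paper's applications it holds by Ferrero--Washington, since every field there is abelian over $\Q$.

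\textbf{The core of Step 2.} You expect a nonzero finite $\widehat H^0(G,K_\infty^\times)$ ``of the size predicted by the local contributions, with the global reciprocity constraint cancelling,'' obtained by stabilization from finite levels. This fails in two ways. First, $\widehat H^0(G,K_n^\times)$ and $\widehat H^0(G,I_{K_n})$ are infinite at every finite level, since each prime of $k_n$ inert in $K_n$ contributes a $\Z/p$; so there is nothing to stabilize. Second, if the local symbols at the ramified places $v\nmid p$ survived in $\widehat H^0(G,K_\infty^\times)$, they would exactly cancel the ramification term from your count.

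The facts you actually need hold only at the infinite level:
(i) $K_\infty^\times$ is cohomologically trivial. Indeed $H^1=0$ by Hilbert 90, and $\widehat H^0\cong H^2(G,K_\infty^\times)=\mathrm{Br}(K_\infty/k_\infty)=0$. This is because every finite place of $k_\infty$ has infinite $p$-power local degree over $k$, which kills $p$-torsion local invariants, and real places contribute nothing by the hypothesis at infinity.
(ii) No place $v\nmid p$ of $k_\infty$ is inert in $K_\infty$, since its residue field contains the $\Z_p$-extension of a finite field and so has no extension of degree $p$. Hence, away from $p$, $I$ is a sum of induced modules (split places) and $R$ copies of the trivial module $\Z$, where $R=\frac{1}{p-1}\sum_{w\nmid p}(e(w/v)-1)$ is the number of ramified places $v\nmid p$. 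These trivial summands are not induced, contrary to what you wrote.
(iii) Above $p$, each component of $I$ is $\Z[1/p]$ because of infinite ramification. After $\otimes\Z_p$ it is uniquely divisible, hence cohomologically trivial; the finiteness of the set of such places plays no role.

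From (i) you get $\widehat H^i(G,P)\cong\widehat H^{i+1}(G,E_{K_\infty})$, and from (ii)--(iii) you get $\chi(G,I\otimes\Z_p)=R$. Together these give $\chi(G,A_K)=R+\chi(G,E_{K_\infty})$.

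The finiteness of $\widehat H^i(G,E_{K_\infty})$ is not automatic, yet you use it both in the hexagon and to get finite kernel and cokernel for $A_k\to A_K^G$. It follows from these sequences together with the finiteness of $\widehat H^i(G,A_K)$, i.e.\ from $\mu=0$.

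\textbf{Signs.} Since $\widehat H^i(G,A)\cong\widehat H^{i+1}(G,T_pA)$, Step 1 in the paper's convention reads $\lambda_p(K)=p\lambda_p(k)+(p-1)\chi(G,A_K)$; check this on $A=\Q_p/\Z_p$ with trivial action. So each of your two displayed identities has the wrong sign, and the two errors cancel.
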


\begin{theorem}[See \cite{Fe}, \cite{Ki2}]\label{FerreroImaQuaLambda}
Let $ d > 0 $ be a square-free integer, $ K = \Q(\sqrt{-d}). $
Then
$$\lambda_{2}(K) =  \begin{cases} \sum_{\text{primes} \ p |d, p \not= 2} 2^{\nu_2(p^2-1)-3}-1,
& \text{if} \ d \notin \{1, 2 \},   \\
0,  & \text{if otherwise.}
\end{cases}   $$
\end{theorem}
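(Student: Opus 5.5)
We plan to derive the formula from Iwasawa's Riemann--Hurwitz formula (Theorem \ref{IwasawaRiemannHurewitz}) with $p=2$, applied to the quadratic extension $K_\infty/\Q_\infty$ where $K=\Q(\sqrt{-d})$ and $k=\Q$. Since $\lambda_2(\Q)=0$ (the class number of every $\Q_n$ is odd, because $2$ is totally ramified in $\Q_n$ and no other prime ramifies), the formula reads
\[
\lambda_2(K)=\sum_{w\nmid 2}(e(w/v)-1)+\chi(G,E_{K_\infty}).
\]
Here $K_\infty/\Q_\infty$ is ramified at the infinite places, so Theorem \ref{IwasawaRiemannHurewitz} does not apply verbatim. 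We would therefore either invoke Kida's version for CM fields or replace $\Q_\infty$ by its own totally real base and read the archimedean contribution into the unit term. Accordingly, we treat the statement as the imaginary-quadratic case of Kida's formula $\lambda^-$: for a CM quadratic extension of $\Q_\infty$ the unit index term contributes $-1$ (or $0$).

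First, the ramification sum. The primes $w\nmid 2$ of $K_\infty$ that ramify over $\Q_\infty$ are exactly those above odd primes $p\mid d$, and each has $e=2$. By Corollary \ref{DecompositionsOfOddPrimeInQn}, for $n$ large the number of primes of $\Q_n$ above $p$ is $g_n(p)=2^{\nu_2(p^2-1)-3}$. Each of these is ramified, hence not split, in $K_\infty$. So the sum equals $\sum_{p\mid d,\,p\neq 2}2^{\nu_2(p^2-1)-3}$.

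Second, the unit term. Since $E_{\Q_\infty}$ has finite index in $E_{K_\infty}$ up to roots of unity (the Hasse unit index $Q\in\{1,2\}$), we compute the Herbrand-type quantity $\chi(G,E_{K_\infty})$ via $\widehat H^0=E_{\Q_\infty}/N(E_{K_\infty})$ and $\widehat H^1$. Write $E_{K_\infty}=W\cdot E_{\Q_\infty}$ or a degree-$2$ enlargement of it. Using $N(\varepsilon)=\varepsilon^2$ for real units, together with $\widehat H^1(G,W_{K_\infty})$, we expect $\chi=-1$ generically. The exception is $\sqrt{-1}$ or $\sqrt{-2}$ lying in $K_\infty$, that is, $d\in\{1,2\}$: then $K_\infty=\Q_\infty(\zeta_{2^\infty})$ and $\lambda_2=0$ directly, since $K_\infty$ is the cyclotomic $\Z_2$-extension of $\Q(\zeta_4)$ (respectively $\Q(\sqrt{-2})$), which has odd class numbers at every layer, $2$ being totally ramified with trivial class group at the base.

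The main obstacle is the exact unit cohomology computation. We need to check that the relevant norm residue symbols and $\widehat H^1(G,\mu_{2^\infty})$ combine to give exactly $-1$ for $d\notin\{1,2\}$; the sign of $-1$ as a norm is the delicate point. Rather than recomputing this, we would cite Ferrero \cite{Fe} and Kida \cite{Ki2}, where this case is settled.
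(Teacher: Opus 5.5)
The paper does not prove this statement; it quotes it from Ferrero and Kida. Your final step is the same citation, so as a proof your proposal amounts to exactly what the paper does.

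The derivation you sketch before that citation is not a viable route, however, and it puts the $-1$ in the wrong place. Theorem \ref{IwasawaRiemannHurewitz} requires $K_\infty/k_\infty$ to be unramified at infinity. For $K_\infty/\Q_\infty$ this cannot be repaired by shifting an archimedean term into the unit term, because both quantities are infinite:
\begin{itemize}
\item Every real place of every $\Q_n$ ramifies, so infinitely many real places ramify.
\item After reducing to $d$ odd (which does not change $K_\infty$), Proposition \ref{HasseUnitIndex} 2) gives $E_{K_\infty}=W_{K_\infty}E_{\Q_\infty}$. The group $G$ fixes $E_{\Q_\infty}$, and $N_G\zeta=\zeta\bar\zeta=1$ for roots of unity. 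Hence $\widehat H^0(G,E_{K_\infty})=E_{\Q_\infty}/E_{\Q_\infty}^2$.
\item This group is infinite-dimensional. Each $E_{\Q_n}/E_{\Q_n}^2$ has dimension $2^n$, and these spaces inject into one another, since $\Q_{n+1}=\Q_n(\sqrt{\alpha_n})$ with $\alpha_n=2+\zeta_{2^{n+2}}+\zeta_{2^{n+2}}^{-1}$ of odd valuation above $2$.
\end{itemize}
So ``$\chi=-1$'' is not a statement about any finite quantity here.

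At a finite level, Herbrand's unit theorem gives $\chi_n=2^n-1$ for every $d$. Absorbing the $2^n$ ramified real places therefore yields $-1$ for $d=1$ as well, which would predict $\lambda_2(\Q(\sqrt{-1}))=-1$. Your split into ``$-1$ generically, $0$ for $d\in\{1,2\}$'' is fitted to the answer rather than derived. Kida's CM formula does not help either: for $p=2$ it needs a CM base, and $\Q_\infty$ is not one.

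A genuine derivation from the paper's own tools goes through $L=\Q(\sqrt{-1},\sqrt d)$, with $d$ odd and $d\neq 1$. Then $L_\infty=\Q(\zeta_{2^\infty},\sqrt d)$ and $L_\infty^+=\Q(\sqrt d)_\infty$. Consider the two quadratic extensions $L_\infty/K_\infty$ and $L_\infty/\Q(\sqrt{-1})_\infty$; both are unramified at infinity.
\begin{itemize}
\item Ramification. The first extension is unramified outside $2$. The second ramifies exactly at the places of $\Q(\zeta_{2^\infty})$ above odd $p\mid d$. There are $2^{\nu_2(p^2-1)-2}$ such places for each $p$: twice the count of Corollary \ref{DecompositionsOfOddPrimeInQn}, since these places split in $\Q_\infty(\sqrt{-1})$. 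Also $\lambda_2(\Q(\sqrt{-1}))=0$.
\item Units. Since $\mu_{2^\infty}\subset L_\infty$, we have $E_{L_\infty}=W_{L_\infty}E_{L_\infty^+}$. Both Galois groups act on $E_{L_\infty^+}$ through $\mathrm{Gal}(\Q(\sqrt d)_\infty/\Q_\infty)$, which produces a common term $\chi_0$.
\item Roots of unity, where the two cases differ. The generator of $\mathrm{Gal}(L_\infty/K_\infty)$ inverts $\mu_{2^\infty}$. So $\widehat H^0(\mu_{2^\infty}/\pm1)\cong\Z/2\Z$ and $\widehat H^{-1}=0$, and the hexagon of Proposition \ref{CohomologyEqualPlusOne} gives $\chi_0+1$.
\item The generator of $\mathrm{Gal}(L_\infty/\Q(\sqrt{-1})_\infty)$ fixes $\mu_{2^\infty}$, so the norm is squaring. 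Then $\widehat H^0=0$ and $\widehat H^{-1}\cong\Z/2\Z$ (generated by the class of $\sqrt{-1}$), which gives $\chi_0-1$.
\end{itemize}
Theorem \ref{IwasawaRiemannHurewitz} now yields $2\lambda_2(K)+\chi_0+1=\lambda_2(L)=\sum_{p\mid d}2^{\nu_2(p^2-1)-2}+\chi_0-1$. This is exactly the stated formula. The $-1$ thus comes from the cohomology of $\mu_{2^\infty}$, not from whether $-1$ is a norm.
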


\begin{theorem}[Hasse, see \cite{Wa}] For any CM number field
$ K, \ (E_{K} : E_{K^{+}} W_{K}) = 1 $ or $ 2. $
\end{theorem}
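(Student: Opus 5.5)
The plan is the classical argument via the index of the unit group generated by $E_{K^{+}}$ and $W_{K}$, which I would reduce to a statement about squaring. Let $K$ be CM with maximal real subfield $K^{+}$ and complex conjugation $J$. Define the homomorphism
$$\phi : E_{K} \longrightarrow W_{K}, \qquad \phi(\varepsilon) = \varepsilon / J(\varepsilon) = \varepsilon/\bar{\varepsilon}.$$
To see that $\phi(\varepsilon)$ is a root of unity, note that every conjugate of $\varepsilon/\bar\varepsilon$ has absolute value $1$. This holds because $K$ is CM: complex conjugation commutes with every embedding $\sigma : K \to \C$, so $|\sigma(\varepsilon/\bar\varepsilon)| = |\sigma\varepsilon| / |\overline{\sigma\varepsilon}| = 1$. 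An algebraic integer all of whose conjugates have absolute value $1$ is a root of unity, by Kronecker's theorem.

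Next I compose $\phi$ with the projection to $W_{K}/W_{K}^{2}$. The key step is to show that the kernel of the induced map
$$\psi : E_{K} \to W_{K}/W_{K}^{2}$$
is exactly $E_{K^{+}}W_{K}$.
\begin{itemize}
\item If $\varepsilon = u\zeta$ with $u \in E_{K^{+}}$ and $\zeta \in W_{K}$, then $\phi(\varepsilon) = \zeta/\bar\zeta = \zeta^{2}$, which lies in $W_{K}^{2}$.
\item Conversely, suppose $\phi(\varepsilon) = \zeta^{2}$ for some $\zeta \in W_{K}$. Put $u = \varepsilon\zeta^{-1}$. Then $\bar u = \bar\varepsilon\,\zeta = \varepsilon\zeta^{-2}\zeta = u$, so $u$ is real, i.e. $u \in E_{K^{+}}$. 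Hence $\varepsilon = u\zeta \in E_{K^{+}}W_{K}$.
\end{itemize}

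It follows that $E_{K}/E_{K^{+}}W_{K}$ injects into $W_{K}/W_{K}^{2}$. Since $W_{K}$ is a finite cyclic group of even order (it contains $-1$), the quotient $W_{K}/W_{K}^{2}$ has order exactly $2$. Therefore the index $(E_{K} : E_{K^{+}}W_{K})$ divides $2$, so it equals $1$ or $2$.

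I expect no real obstacle here. The only point needing care is justifying that $\phi(\varepsilon)$ lies in $W_{K}$, which rests on complex conjugation commuting with all embeddings, as in the first paragraph.
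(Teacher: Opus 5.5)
Your proof is correct. It is the standard argument from Washington's book, which the paper cites instead of proving the result itself: $\varepsilon \mapsto \varepsilon/\bar{\varepsilon}$ lands in $W_K$ by Kronecker's theorem, and modulo $W_K^{2}$ its kernel is exactly $E_{K^{+}}W_K$, so the index divides $\#(W_K/W_K^{2}) = 2$.
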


Now let $ F $ be a real abelian number field, $ d, d_{1}, \cdots, d_{r} $ be
square-free positive integers, and let $ K = F(\sqrt{d_{1}}, \cdots, \sqrt{d_{r}}, \sqrt{-d}) $
be a degree $ 2^{r+1} $ extension of $ F. $ Then $ K $ is a CM field, and its maximal real
subfield $ K^{+} = F(\sqrt{d_{1}}, \cdots, \sqrt{d_{r}}). $ Let $ K_{\infty} =
\bigcup_{n=0}^{\infty} K_{n} $ be the cyclotomic $ \Z_{2}$-extension, $ K_{0} = K $ and
$ [K_{n} : K] = 2^{n}. $ Denote $ W_{K_{n}} = W_{K_{n}}(2) \times W_{K_n}^{\prime }, $
where $ W_{K_n}^{\prime } $ is the subgroup of all elements of $ W_{K_{n}} $ whose order is odd.   \\
Obviously, $ K(\sqrt{2}) $ and $ K $ have the same cyclotomic $ \Z_{2}$-extension, so we may as well
assume that all the integers $ d, d_{1}, \cdots, d_{r} $ are odd and $ \sqrt{2} \notin K. $
Then $ K \bigcap \Q_{\infty} = \Q. $

\begin{lemma}\label{RootsOfUnityinKInfty}
For $ K $ and $ K_{n} \ (n \geq 1) $ as above, we have
$$ W_{K_{n}}(2) =  \begin{cases} \{\pm 1\}, &
\text{if} \ \sqrt{d} \notin K^{+}(\sqrt{2}),  \\
\left<\zeta_{2n+2} \right>, &  \text{if} \ \sqrt{d} \in K^{+}(\sqrt{2}).
\end{cases}  $$
\end{lemma}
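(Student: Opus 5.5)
The plan is to use that $K_{n}=KQ_{n}$ is a CM field whose maximal real subfield is $K_{n}^{+}=K^{+}Q_{n}$. We then detect $\sqrt{-1}$ and higher $2$-power roots of unity by comparing degrees inside the cyclic tower $K^{+}Q_{\infty}/K^{+}$. (The exponent in the statement should read $\zeta_{2^{n+2}}$.)

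First I record the basic degree facts. We have reduced to $\sqrt{2}\notin K$, hence $K\cap Q_{\infty}=\Q$ and also $K^{+}\cap Q_{\infty}=\Q$. So $\mathrm{Gal}(K^{+}Q_{\infty}/K^{+})\cong \mathrm{Gal}(Q_{\infty}/\Q)\cong\Z_{2}$, and $[K^{+}Q_{m}:K^{+}]=2^{m}$ for every $m$. The unique quadratic subextension of $K^{+}Q_{n}/K^{+}$ (for $n\geq 1$) is $K^{+}(\sqrt{2})$, since $Q_{1}=\Q(\sqrt{2})$. Moreover $K_{n}=K^{+}Q_{n}(\sqrt{-d})$ is a totally imaginary quadratic extension of the totally real field $K_{n}^{+}=K^{+}Q_{n}$.

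Next I decide whether $\sqrt{-1}\in K_{n}$. Since $K_{n}=K_{n}^{+}(\sqrt{-d})$, Kummer theory gives $\sqrt{-1}\in K_{n}$ iff $-1\in (K_{n}^{+})^{\times 2}\cup -d\,(K_{n}^{+})^{\times 2}$. The first option is impossible because $K_{n}^{+}$ is real, so the condition is $\sqrt{d}\in K_{n}^{+}$. If $\sqrt{d}\in K^{+}$ this holds trivially. Otherwise $K^{+}(\sqrt{d})$ is a quadratic extension of $K^{+}$. It lies in the cyclic $2$-extension $K^{+}Q_{n}/K^{+}$ iff it equals the unique quadratic subfield $K^{+}(\sqrt{2})$. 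In either case $\sqrt{-1}\in K_{n}$ iff $\sqrt{d}\in K^{+}(\sqrt{2})$. If this fails, $W_{K_{n}}(2)=\{\pm1\}$, since any nontrivial $2$-power root of unity other than $-1$ would force $\sqrt{-1}\in K_{n}$.

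If $\sqrt{d}\in K^{+}(\sqrt{2})$, then $K_{n}\supseteq Q_{n}(\sqrt{-1})=\Q(\zeta_{2^{n+2}})$, because $Q_{n}=\Q(\zeta_{2^{n+2}}+\zeta_{2^{n+2}}^{-1})$ is the maximal real subfield of $\Q(\zeta_{2^{n+2}})$. This gives $\langle\zeta_{2^{n+2}}\rangle\subseteq W_{K_{n}}(2)$.

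The remaining step, which I expect to be the only real point, is the reverse inclusion. Suppose $\zeta_{2^{n+3}}\in K_{n}$. Then $Q_{n+1}=\Q(\zeta_{2^{n+3}}+\zeta_{2^{n+3}}^{-1})\subseteq K_{n}$. Being totally real, $Q_{n+1}$ lies in $K_{n}^{+}=K^{+}Q_{n}$, so $K^{+}Q_{n+1}\subseteq K^{+}Q_{n}$. This contradicts $[K^{+}Q_{n+1}:K^{+}]=2^{n+1}>2^{n}=[K^{+}Q_{n}:K^{+}]$, so $W_{K_{n}}(2)=\langle\zeta_{2^{n+2}}\rangle$.
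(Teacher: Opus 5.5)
Your proposal is correct and follows the paper's proof step by step. Like the paper, you first show that $\sqrt{-1}\in K_n \Leftrightarrow \sqrt{d}\in K_n^{+} \Leftrightarrow \sqrt{d}\in K^{+}(\sqrt{2})$ using the cyclicity of $K_n^{+}/K^{+}$. You then obtain $\zeta_{2^{n+2}}$ from $\Q_n(\sqrt{-1})=\Q(\zeta_{2^{n+2}})$, and rule out $\zeta_{2^{n+3}}$ because $[K^{+}\Q_{n+1}:K^{+}]=2^{n+1}$. Your Kummer-theoretic argument and your appeal to the unique quadratic subextension simply spell out the step the paper calls ``easy to see''. You are also right that $\zeta_{2n+2}$ in the statement should read $\zeta_{2^{n+2}}$.
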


\begin{proof} Since the extension $ K_{n}^{+}/ K^{+} $ is cyclic and $ 2 \nmid d, $
it is easy to see that, $ \sqrt{-1} \in K_{n} \Leftrightarrow \sqrt{d} \in K_{n}^{+}
\Leftrightarrow \sqrt{d} \in K_{1} = K(\sqrt{2}). $
If $ \sqrt{-1} \in K_{n}, $ since $ \Q(\zeta_{2^{n+2}}) =
\Q(\zeta_{2^{{n+2}}} + \zeta_{2^{{n+2}}}^{-1})(\sqrt{-1}), $ we have
$ K_{n} = \Q_{n} K = \Q(\zeta_{2^{{n+2}}} + \zeta_{2^{{n+2}}}^{-1}) K = \Q(\zeta_{2^{n+2}}) K.
$ So $ \zeta_{2^{n+2}} \in K_{n}. $ Conversely, if $ \zeta_{2^{n+2}} \in K_{n}, $ then
$ \sqrt{-1} \in \Q(\zeta_{2^{{n+2}}}) \subset K_{n}. $ So $ \sqrt{-1} \in K_{n}
\Leftrightarrow \zeta_{2^{{n+2}}} \in K_{n}. $ If $  \zeta_{2^{{n+3}}} \in K_{n}, $
then $ K^{+}(\zeta_{2^{n+3}} + \zeta_{2^{n+3}}^{-1}) \subset K_{n}^{+}. $ However,
$ [K^{+}(\zeta_{2^{n+3}} + \zeta_{2^{n+3}}^{-1}) : K^{+}] = 2^{n+1}, $ a contradiction.
Therefore, $ W_{K_{n}}(2) = \left<\zeta_{2n+2} \right>. $ The proof is completed.
\end{proof}

\begin{proposition}\label{HasseUnitIndex}
Let $ K = F(\sqrt{d_{1}}, \cdots, \sqrt{d_{r}}, \sqrt{-d}) $
be a CM field as above, where $ d_{1}, \cdots, d_{r}, d \in \Z_{> 0} $ are odd square-free
integers. Assume $ \sqrt{2} \notin K^{+} $ and $ [K : F] = 2^{r+1}. $ If one of the
following conditions holds:
\begin{enumerate}[$1)$]
\item $ \sqrt{d} \in K^{+}(\sqrt{2}); $
\item There exists an odd prime $ p $ such that $ p \mid d $ and
$ p \nmid \mathfrak{f}_{F/\Q} \cdot \prod_{i=1}^{r} d_{i}, $ where
$ \mathfrak{f}_{F/\Q} $ is the conductor of $ F. $
\end{enumerate}
Then $ (E_{K_{\infty}} : E_{K_{\infty}^{+}}W_{K_{\infty}}) = 1. $
\end{proposition}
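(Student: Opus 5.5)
It suffices to prove $(E_{K_n} : E_{K_n^{+}}W_{K_n}) = 1$ for every $n \geq 0$, or at least for all $n \gg 0$. Indeed, $E_{K_\infty}$, $E_{K_\infty^+}$ and $W_{K_\infty}$ are increasing unions, so an element of $E_{K_\infty}$ outside $E_{K_\infty^+}W_{K_\infty}$ would already lie in some $E_{K_n}$ outside $E_{K_n^+}W_{K_n}$, and this persists for larger $n$. I will use the standard description of the Hasse index of a CM field $L$ through the map
\[
\phi : E_L \to W_L/W_L^2, \qquad \varepsilon \mapsto \varepsilon/\bar\varepsilon .
\]
Its kernel is $E_{L^+}W_L$, so the index equals $\sharp\,\mathrm{im}\,\phi \in \{1,2\}$. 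The index is $2$ exactly when some unit $\varepsilon$ satisfies $\varepsilon/\bar\varepsilon = \zeta$ with $\zeta$ a generator of $W_L(2)$ modulo squares. Equivalently, $\zeta\,\varepsilon\bar\varepsilon = \varepsilon^2$ up to a root of unity. This in turn says that for a generator $\zeta$ of $W_L(2)$, the element $\zeta\eta$ is a square in $L$ for some $\eta \in E_{L^+}$; equivalently $L = L^+(\sqrt{\zeta\eta})$, or in a more usable form $L = L^+(\sqrt{-\eta'})$ with $\eta'$ a totally positive unit when $W_L(2) = \{\pm1\}$.

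\textbf{Case 2).} Work at level $n$ with $L = K_n$. First suppose $W_{K_n}(2) = \{\pm 1\}$. If the index were $2$, then $K_n = K_n^+(\sqrt{-\eta})$ for a unit $\eta \in E_{K_n^+}$, so $K_n/K_n^+$ would be unramified at all finite primes not above $2$. On the other hand, by Lemma \ref{Lemma 2.10}, applied to the base field $F$ (whose conductor is prime to $p$) together with the prime $p \mid d$, $p \nmid \prod d_i$, any prime of $K^+$ above $p$ ramifies in $K = K^+(\sqrt{-d})$. The extension $K_n^+/K^+$ is unramified at $p$ since $p$ is odd, so this ramification survives in $K_n/K_n^+$, a contradiction. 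If instead $\zeta_{2^{n+2}} \in K_n$ with $n \geq 1$, the relation reads $K_n = K_n^+(\sqrt{\zeta\eta})$. Since $\zeta$ is a unit, the same argument applies: $K_n/K_n^+$ would be unramified outside $2$, again contradicting ramification at $p$. The point to check is that a Kummer generator which is a unit times a root of unity gives no ramification at odd primes; this is immediate.

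\textbf{Case 1).} Here, by Lemma \ref{RootsOfUnityinKInfty}, $W_{K_n}(2) = \langle \zeta_{2^{n+2}}\rangle$ for $n \geq 1$, and $K_n = K_n^+(\sqrt{-1})$. I will use the classical criterion: if the index were $2$, then $\zeta = \zeta_{2^{n+2}}$ times a real unit would be a square, so $\sqrt{\zeta}\,\sqrt{\eta}\in K_n$ up to sign. This is a relation of the form $\varepsilon^2 = \zeta\eta$. Take norms down the tower, or apply $N_{K_n/\Q(\zeta_{2^{n+2}})}$-type arguments. The cleaner route is Lemma \ref{RootsOfUnityinKInfty} at level $n+1$: the unit $\varepsilon$ would satisfy $\varepsilon^2 = \zeta_{2^{n+2}}\eta$. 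Then $(1-\zeta_{2^{n+3}})$-adic considerations show that $\sqrt{\zeta_{2^{n+2}}}\cdot u \in K_n$ for some real-type element $u$. Consequently $K_n(\zeta_{2^{n+3}})$ would equal $K_n(\sqrt{\eta})$, with $\eta$ real. The alternative standard fact I would invoke is that for $L \supseteq \Q(\zeta_{2^m})$ with $\zeta_{2^{m+1}} \notin L$, the index is $2$ iff $(2+\zeta_{2^m}+\zeta_{2^m}^{-1})$ is, up to units, handled by whether the primes above $2$ in $L^+$ are inert or split in $L$; concretely, index $1$ holds when $1-\zeta_{2^m}$ generates a non-principal-class situation. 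So the main obstacle is case 1). I would first try to show that $\zeta_{2^{n+2}}\eta$ being a square forces $\zeta_{2^{n+3}}\in K_n$: write $\varepsilon = \zeta_{2^{n+3}}\,\varepsilon'$ in $K_n(\zeta_{2^{n+3}})$ with $\bar\varepsilon' = \varepsilon'$ (using $\varepsilon/\bar\varepsilon = \zeta_{2^{n+2}}$). Then $\varepsilon'$ is real in the CM field $K_n(\zeta_{2^{n+3}}) = K_{n+1}$, i.e. $\varepsilon'\in K_{n+1}^+$. Then compare the degree: $\varepsilon\in K_n$ forces $\zeta_{2^{n+3}}\in K_n K_{n+1}^+$, and the degree count from the proof of Lemma \ref{RootsOfUnityinKInfty}, $[K^+(\zeta_{2^{n+3}}+\zeta_{2^{n+3}}^{-1}):K^+]=2^{n+1}$, gives the needed contradiction. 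Whether this descent really works, namely whether $\zeta_{2^{n+3}}\varepsilon'$ lying in $K_n$ with $\varepsilon'$ real in $K_{n+1}$ is contradictory, is the step I expect to require care. If it fails, I would fall back on passing to the limit $K_\infty$, where $W_{K_\infty}(2)=\mu_{2^\infty}$ is divisible, so $W/W^2$ is trivial and the map $\phi$ is zero; this gives index $1$ directly in case 1). This last observation is in fact the argument I would ultimately use.
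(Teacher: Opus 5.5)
Your final argument is correct and is essentially the paper's. In case 2), you and the paper both pass to a finite layer $K_n$ of index $2$, write $K_n=K_n^+(\sqrt{v})$ with $v$ a unit of $K_n^+$, and contradict the ramification above $p$ in $K/K^+$, which survives in $K_n/K_n^+$ because $K_n^+/K^+$ is unramified at $p$. In case 1), the paper writes $\zeta$ as the square of a root of unity in $K_\infty\supseteq\mu_{2^\infty}$, so that $\varepsilon^2=\nu\zeta$ forces $\varepsilon\in E_{K_\infty^+}W_{K_\infty}$; this is exactly your observation that $W_{K_\infty}/W_{K_\infty}^2$ is trivial.

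Do discard the finite-level descent you sketch for case 1), because index $1$ at finite layers is false in general there. For $K=\Q(\sqrt{3},\sqrt{-1})$ (so $d=1$) one has $K_n=\Q(\zeta_{3\cdot 2^{n+2}})$. Its Hasse index is $2$ for every $n$, witnessed by the unit $1-\zeta_{3\cdot 2^{n+2}}$, so index $1$ only appears in the limit.

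Similarly, your sub-case $\zeta_{2^{n+2}}\in K_n$ in case 2) is vacuous. Since $p$ ramifies in $\Q(\sqrt{d})$ but not in $K^+(\sqrt{2})$, we have $\sqrt{d}\notin K^+(\sqrt{2})$, hence $W_{K_n}(2)=\{\pm 1\}$.
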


\begin{proof} 1) \ Assume $ \sqrt{d}\in K^{+}(\sqrt{2}). $ Then $ (E_{K_{\infty}} : E_{K_{\infty}^{+}} W_{K_{\infty}}) = 1. $
If otherwise, then $( E_{K_{\infty}} : E_{K_{\infty}^{+}} W_{K_{\infty}}) = 2, $ so there exists a unit
$ \varepsilon \in E_{K_{\infty}} $ such that $ \varepsilon \notin E_{K_{\infty}^{+}} W_{K_{\infty}} $ and
$ \varepsilon^{2} \in E_{K_{\infty}^{+}} W_{K_{\infty}}. $ Let $ \varepsilon^{2} = \nu \zeta, $ where
$ \zeta \in W_{K} $ and $ \nu \in E_{K_{\infty}^{+}}. $ We may as well assume that $ \nu > 0. $ Write
$ \zeta = \zeta_{2^{t}} \zeta_{2m+1}, $ where $ \zeta_{2^{t}} \in W_{K_{\infty}}(2) $ and $ \zeta_{2m+1} \in
W_{K_{\infty}}^{\prime }. $ Then $ \varepsilon^{2} = \nu(\zeta_{2^{t+1}})^{2} (\zeta_{2m+1}^{m+1})^{2}. $
By Lemma \ref{RootsOfUnityinKInfty} above we know that $ W_{K_{\infty}}(2) $ contains all the $ 2$-power roots of unity.
So $ \sqrt{\nu} = \pm \varepsilon / (\zeta_{2^{t+1}} \zeta_{2m+1}^{m+1}) \in E_{K_{\infty}}, $
hence $ \sqrt{\nu} \in E_{K^{+}}, $ and so $ \varepsilon = \pm \sqrt{\nu} (\zeta_{2^{t+1}}) (\zeta_{2^{2m+1}}^{m+1})
\in E_{K_{\infty}^{+}} W_{K_{\infty}}, $ a contradiction. \\
2) \ If otherwise, then $ (E_{K_{n}} : E_{K_{n}^{+}} W_{K_{n}}) = 2, $ so there exists a unit
$ \varepsilon \in E_{K_{n}} $ such that $ \varepsilon^{2} \in E_{K_{n}^{+}} W_{K_n} $ and
$ \varepsilon \notin E_{K_{n}^{+}} W_{K_n}. $ Let $ \varepsilon^{2} = \zeta_{2m+1} v, $ where
$ \zeta_{2m+1} \in W_{K_{n}} $ and $ v \in E_{K_{n}^{+}}. $ Notice that $ v < 0. $ In fact, if
$ v \geq 0, $ then $ \sqrt{v} \in \R \cap E_{K_{n}} = E_{K_{n}^{+}}, $ and $ \zeta_{2(2m+1)} =
\pm \varepsilon / \sqrt{v} \in E_{K_{n}}, $ so $ \zeta_{2(2m+1)} \in W_{K_{n}}, $ hence $ \varepsilon =
\pm \zeta_{2(2m+1)} v \in W_{K_{n}} E_{K_{n}^{+}}, $ which contracts to the choice of $ \varepsilon. $
Thus $ v < 0, $ then for $ K_{n} = K_{n}^{+} (\sqrt{v}), $ the extension $ K_{n} /  K_{n}^{+} $ is
unramified outside $ 2. $ Moreover, as a sub-extension of the $ \Z_{2}$-extension, $ K_{n}^{+} / K^{+} $
is unramified outside $ 2. $ Hence $ K_{n} / K^{+}$ is unramified outside $ 2. $
$$     \xymatrix@C=1.51em@R=2em{
        & & K_{n}& \\
        & K\ar@{-}[ur]& &K_{n}^{+}\ar@{-}[ul] \\
        \mathbb{Q}(\sqrt{-d})\ar@{-}[ur]& & K^{+}\ar@{-}[ur]\ar@{-}[ul]& \\
        & \mathbb{Q}\ar@{-}[ur]\ar@{-}[ul]& &
    }   $$
If there exists an odd prime $ p $ such that $ p \mid d $ and
$ p \nmid \mathfrak{f}_{F/\Q} \cdot \prod_{i=1}^{r} d_{i}, $ then $ p $ is ramified in $ \Q(\sqrt{-d})/\Q $
but unramified in $ K^{+}/\Q, $ hence all the primes over $ p $ are ramified in $ K / K^{+}, $
a contradiction. The proof is completed.
\end{proof}

\begin{remark}
     The result 2) of Proposition \ref{HasseUnitIndex} above can also be deduced from of \cite[Theorem 1]{Le}.
\end{remark}

Now for $ d_{1}, \cdots, d_{r}, d \in \Z_{> 0} $ as above, we denote \\ $ K^{(0)} = F, \ K^{(i)} =
F(\sqrt{d_{1}}, \cdots, \sqrt{d_{i}}), t_{i} = d_{i} \cdots d_{r} d, \ G_{i} =
\text{\rm Gal}(K^{(i)} / K^{(i-1)} ) \ (1 \leq i \leq r), $ and write $ t_{r+1} = d. $ Then
$$ G_{i} \cong \text{\rm Gal}(K_{\infty}^{(i)} / K_{\infty}^{(i-1)}) \cong
\text{\rm Gal}(K_{\infty}^{(i)}(\sqrt{-t_{i+1}})/K^{(i-1)}_{\infty}(\sqrt{-t_{i}})). $$
All $ E_{K_{\infty}^{(i)}(\sqrt{-t_{i}})} $ and $ E_{K_{\infty}^{(i)}} $ are $ G_{i}$-modules
($ 1 \leq i \leq r$).
$$
\xymatrix@C=58pt{
    & K  & & K_{\infty}\\
    K^{(r)}\ar@{-}[ur] & & K^{(r)}_{\infty}\ar@{-}[ur] & \\
    &K^{(r-1)}(\sqrt{-t_r})\ar@{-}[uu]_{G_r} & &K^{(r-1)}_{\infty}(\sqrt{-t_r})\ar@{-}[uu]_{G_r} \\
    K^{(r-1)}\ar@{-}[ur]\ar@{-}[uu]^{Gr} & & K^{(r-1)}_{\infty}\ar@{-}[ur]\ar@{-}[uu]^{G_r} & \\
     & K^{(1)}(\sqrt{-t_2})\ar@{.}[uu]\ar@{=>}[r]^{\text{\qquad \text{cyclo.} $ \Z_{2}$-\text{extension}}}&
     &K^{(1)}_{\infty}(\sqrt{-t_2})\ar@{.}[uu]& \\
     K^{(1)} \ar@{-}[ur]\ar@{.}[uu] & & K^{(1)}_{\infty} \ar@{-}[ur]\ar@{.}[uu] & \\
     & F(\sqrt{-t_1})\ar@{-}[uu]_{G_1}& & F_{\infty}(\sqrt{-t_1})\ar@{-}[uu]_{G_1}\\
    F\ar@{-}[ur]\ar@{-}[uu]^{G_1}& & F_{\infty}\ar@{-}[ur]\ar@{-}[uu]^{G_1} \\
}  $$

\begin{proposition}\label{CohomologyEqual}
Let $ K = F(\sqrt{d_{1}}, \cdots, \sqrt{d_{r}}, \sqrt{-d}), $ where $ d_{1}, \cdots, d_{r}, d $
are positive odd square-free integers, $ F $ be a real abelian number field whose conductor is prime to
$ d \prod_{i=1}^{r} d_{i}. $ Suppose $ W_{K_{\infty}}(2) = \{\pm 1\} $ and
$ (E_{K_{\infty}}: \ E_{K^{+}_{\infty}} W_{K_{\infty}}) = 1. $ Then
        $$
         H^{i}(G_{r}, E_{K_{\infty}}) \cong  H^{i}(G_{r}, E_{K_{\infty}^{+}}) \qquad
\text{for} \ i = 1 \ \text{or} \ 2.
        $$
\end{proposition}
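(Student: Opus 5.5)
The hypotheses reduce $E_{K_\infty}$ to the direct sum of $\{\pm1\}$, an odd-order piece and $E_{K_\infty^+}$; the odd-order and torsion-free parts contribute nothing, and the only danger is the $\{\pm 1\}$ factor. Since $W_{K_\infty}(2)=\{\pm1\}$, we have $W_{K_\infty}=\{\pm1\}\times W'_{K_\infty}$ with $W'_{K_\infty}$ of odd order. The Hasse index hypothesis gives $E_{K_\infty}=E_{K_\infty^+}W_{K_\infty}$. Because $E_{K_\infty^+}\cap W_{K_\infty}=\{\pm1\}$ (real roots of unity), I will rewrite this as
\[
E_{K_\infty}=E_{K_\infty^+}\cdot W'_{K_\infty}\cong E_{K_\infty^+}\times W'_{K_\infty},
\]
a direct product. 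The intersection $E_{K_\infty^+}\cap W'_{K_\infty}$ is trivial since $-1$ has even order. Both factors are stable under $G_r=\mathrm{Gal}(K_\infty/K_\infty^{(r-1)}(\sqrt{-t_r}))$: $W'$ is characteristic, and $G_r$ preserves $K_\infty^+$ because complex conjugation is central in the abelian group $\mathrm{Gal}(K_\infty/\Q)$ (or because $F$ is abelian and everything is abelian over $\Q$). So this is a decomposition as $G_r$-modules.

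Cohomology commutes with finite direct sums, so
\[
H^i(G_r,E_{K_\infty})\cong H^i(G_r,E_{K_\infty^+})\oplus H^i(G_r,W'_{K_\infty}).
\]
The module $W'_{K_\infty}$ is a torsion group of odd order (a union of finite odd-order groups; in fact finite, since $K_\infty$ contains only finitely many roots of unity). Its cohomology in positive degree is killed both by $|G_r|=2$ and by multiplication by odd integers, which act invertibly. Hence $H^i(G_r,W'_{K_\infty})=0$ for $i=1,2$, and the claimed isomorphism follows.

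The step needing the most care is the $G_r$-equivariance together with the identification of the action. One has to regard $E_{K_\infty^+}$ with the restricted $G_r$-action, via $G_r\cong\mathrm{Gal}(K_\infty^+/K_\infty^{(r-1)})$. I would check that the restriction map $\mathrm{Gal}(K_\infty/K^{(r-1)}_\infty(\sqrt{-t_r}))\to \mathrm{Gal}(K_\infty^{+}/K^{(r-1)}_\infty)$ is an isomorphism, which holds because $K^{(r-1)}_\infty(\sqrt{-t_r})$ is not real so it meets $K_\infty^+$ exactly in $K^{(r-1)}_\infty$. This matches the $G_r$ identification displayed before the statement. The roles of the two hypotheses are then clear: $W_{K_\infty}(2)=\{\pm1\}$ is what puts $\pm1$ inside $E_{K_\infty^+}$, so that no $2$-torsion survives outside it, and the index hypothesis is what makes the product exhaust $E_{K_\infty}$.
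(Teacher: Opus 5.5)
Your proof is correct and is essentially the paper's argument: the paper applies the long exact cohomology sequence to $0\to E_{K_\infty^+}\to E_{K_\infty}\to E_{K_\infty}/E_{K_\infty^+}\to 0$ with quotient $\cong W'_{K_\infty}$ of odd order. You instead observe that this sequence splits $G_r$-equivariantly as $E_{K_\infty}=E_{K_\infty^+}\times W'_{K_\infty}$ and use additivity of cohomology, which rests on the same key fact (odd-order modules have trivial cohomology for a group of order $2$). Your explicit check that restriction identifies the $G_r$-action on $E_{K_\infty^+}\subset E_{K_\infty}$ with the natural action of $\mathrm{Gal}(K_\infty^+/K_\infty^{(r-1)})$ is a point the paper leaves implicit.
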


\begin{proof}  We have group isomorphism
$ E_{K_{\infty}} / E_{K_{\infty}^{+}} = E_{K_{\infty}^{+}} W_{K_{\infty}} / E_{K_{\infty}^{+}}
\cong W_{K_{\infty}} / E_{K_{\infty}^{+}} \cap W_{K_{\infty}}. $
As $ E_{K_{\infty}^{+}} \cap W_{K_{\infty}} = \{\pm 1\} = W_{K_{\infty}}(2), $ so
$ W_{K_{\infty}} /E_{K_{\infty}^{+}} \cap W_{K_{\infty}} = W_{K_{\infty}}^{\prime}, $ where
$ W_{K_{\infty}}^{\prime} = \{\zeta \in W_{K_{\infty}}: \zeta^{t} = 1 \ \text{for some odd number} \ t\}. $
Thus every element of $E_{K_{\infty}}/E_{K_{\infty}^+}$ is of odd order, so multiplication by $2$ is an
automorphism of $ E_{K_{\infty}} / E_{K_{\infty}^{+}}. $ Then it follows \cite[Proposition 1.6.2]{NSW} that
$ \widehat{H}^{i}(G_{r}, E_{K_{\infty}} / E_{K_{\infty}^{+}}) = 0 $ for all $ i \in \Z. $ On the other hand,
the short exact sequence of $ G_{r}$-modules
$$ 0 \rightarrow E_{K_{\infty}^{+}} \rightarrow E_{K_{\infty}} \rightarrow
E_{K_{\infty}}/E_{K^{+}_{\infty}} \rightarrow 0   $$
induces a long exact sequence of groups
$$  \cdots \rightarrow \widehat{H}^{i}(G_{r}, E_{K_{\infty}^{+}}) \rightarrow \widehat{H}^{i}(G_{r}, E_{K_{\infty}})
\rightarrow \widehat{H}^{i}(G_{r}, E_{K_{\infty}} /E_{K^{+}_{\infty}}) \rightarrow
\widehat{H}^{i+1}(G_{r}, E_{K_{\infty}^{+}}) \rightarrow \cdots.   $$
So from the above $ \widehat{H}^{i}(G_{r}, E_{K_{\infty}} / E_{K_{\infty}^{+}}) = 0, $ we get
$ \widehat{H}^{i}(G_{r}, E_{K_{\infty}}) \cong \widehat{H}^{i}(G_{r}, E_{K_{\infty}^{+}}) $ for all $ i \in \Z, $
and our conclusion follows because $ \widehat{H}^{i} = H^{i} (i = 1, 2) $ for finite groups
$ G_{r} (\cong \Z / 2 \Z $ here). The proof is completed.
\end{proof}

\begin{proposition} \label{CohomologyEqualPlusOne}
Let $ K = F(\sqrt{d_1},\cdots,\sqrt{d_r},\sqrt{-d})$, where $d_1,\cdots, d_r,d$ are positive odd square-free integers,
$F$ be a real abelian field whose conductor is prime to $d\prod_{i=1}^r d_i$. Suppose $\sqrt{2}\not\in K^+$ and
$\sqrt{d}\in K^+(\sqrt{2}). $ Then
    $$
    \chi(G_r,E_{K_{\infty}})=\chi(G_r,E_{K_{\infty}^+})+1.
    $$
\end{proposition}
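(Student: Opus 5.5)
We compare the two $G_r$-modules $E_{K_\infty}$ and $E_{K_\infty^+}$ through the torsion quotient, exactly as in Proposition \ref{CohomologyEqual}. The difference now is that the $2$-part of the roots of unity is no longer trivial. Under the hypothesis $\sqrt{d}\in K^+(\sqrt{2})$, Lemma \ref{RootsOfUnityinKInfty} shows that $W_{K_\infty}(2)=\mu_{2^\infty}$ is the full group of $2$-power roots of unity. Proposition \ref{HasseUnitIndex} 1) gives $(E_{K_\infty}:E_{K_\infty^+}W_{K_\infty})=1$. Hence
$$E_{K_\infty}/E_{K_\infty^+}\cong W_{K_\infty}/\{\pm1\}\cong \mu_{2^\infty}/\{\pm1\}\times W'_{K_\infty},$$
where $W'_{K_\infty}$ is the odd-order part. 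The odd part has trivial Tate cohomology for $G_r\cong\Z/2\Z$ by \cite[Proposition 1.6.2]{NSW}, so only $M:=\mu_{2^\infty}/\{\pm1\}\cong\mu_{2^\infty}$ contributes (the isomorphism is squaring).

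Next we determine the $G_r$-action on $M$. Let $\sigma$ generate $G_r$. It fixes $\sqrt{-t_r}$ and sends $\sqrt{d_r}\mapsto-\sqrt{d_r}$ and $\sqrt{-d}\mapsto-\sqrt{-d}$. Whether $\sigma$ fixes $\sqrt{-1}$ is decided by whether $\sigma$ fixes $\sqrt{d}$, which lies in $K^+(\sqrt2)$. The generic situation should be that $\sigma$ acts on $\mu_{2^\infty}$ either trivially or by $\zeta\mapsto\zeta^{-1}$ (possibly composed with $\zeta\mapsto\zeta^{1+2^{k}}$, which is harmless after restricting to the relevant cyclotomic tower). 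In either case we compute the Tate cohomology of $\Z/2$ acting on $\Q_2/\Z_2$ by $\pm1$:
\begin{itemize}
\item for the trivial action, $\widehat H^0=M/2M=0$ and $\widehat H^1=M[2]\cong\Z/2$;
\item for the inversion action, $\widehat H^0=M^{G}/N M=\{\pm1\}$ and $\widehat H^1=\ker N/(1-\sigma)M=M/2M=0$.
\end{itemize}
So $\chi(G_r,M)=\pm1$, with the sign determined by the action. The key step is to check that the action is the one giving $+1$, which should be inversion (complex-conjugation-like). I expect this sign check to be the main obstacle. I would use the fact that $K_\infty^+=K^+\Q_\infty$ and decide whether $\sigma$ fixes $\sqrt{-1}=\sqrt{-d}/\sqrt{d}$. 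If the naive action comes out trivial, I would re-examine which Galois group $G_r$ is acting through (the identification in the displayed diagram). Failing that, I would pass to the restricted unit module modulo $2$-power torsion, via the Herbrand quotient of finite modules.

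Finally, take the long exact Tate cohomology sequence of $0\to E_{K_\infty^+}\to E_{K_\infty}\to E_{K_\infty}/E_{K_\infty^+}\to0$. All groups involved are finite $\F_2$-spaces; this is guaranteed by the setup of Theorem \ref{IwasawaRiemannHurewitz}, where $\chi$ is defined. Since the sequence is a hexagon (period $2$), additivity of $\chi$ (alternating dimension counts) gives
$$\chi(G_r,E_{K_\infty})=\chi(G_r,E_{K_\infty^+})+\chi(G_r,M)=\chi(G_r,E_{K_\infty^+})+1.$$
This additivity needs only finiteness of each term, which holds because $\widehat H^i(G_r,M)$ is finite as computed above.
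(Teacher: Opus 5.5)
Your framework matches the paper's. You use the sequence $0\to E_{K_\infty^+}\to E_{K_\infty}\to E_{K_\infty}/E_{K_\infty^+}\to 0$, and Proposition \ref{HasseUnitIndex} to identify the quotient with $W_{K_\infty}/\{\pm1\}$. You then kill the odd part and use additivity of $\chi$ around the hexagon; the paper's case split ($\varphi=0$ or $\varphi$ surjective) is the same count. Your two Tate cohomology computations for $M$ are also correct.

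There is, however, a genuine gap exactly where you flagged it. You never show that $G_r$ acts on $M$ by inversion, and that is the entire content of the proposition. Your reduction can be made exact. A generator $\sigma$ of $G_r=\mathrm{Gal}(K_\infty/K^{(r-1)}_\infty(\sqrt{-t_r}))$ fixes $\Q_\infty$, so on $\mu_{2^\infty}$ it is either the identity or $\zeta\mapsto\zeta^{-1}$; no $\zeta^{1+2^k}$ twist can occur. It is inversion iff $\sqrt{-1}\notin K^{(r-1)}_\infty(\sqrt{-t_r})$. By Lemma \ref{RootsOfUnityinKInfty} applied to the CM field $K^{(r-1)}(\sqrt{-d_rd})$, this holds iff $\sqrt{d_rd}\notin K^{(r-1)}(\sqrt2)$. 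Since $\sqrt d\in K^{(r-1)}(\sqrt2,\sqrt{d_r})$, this is the same as $\sqrt d\in K^{(r-1)}(\sqrt2)$.

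The hypotheses as you used them do not force this, and your fallback plans cannot repair it, because the sign genuinely flips. Take $r=1$ and $d_1=d$, say $K=\Q(\sqrt3,\sqrt{-3})$. Then $F_\infty(\sqrt{-t_1})=F_\infty(\sqrt{-1})\supseteq\mu_{2^\infty}$, so $G_1$ acts trivially on $M$. Hence $\widehat H^0(G_1,M)=0$ and $\widehat H^1(G_1,M)\cong\Z/2\Z$, and the hexagon gives $\chi(G_1,E_{K_\infty})=\chi(G_1,E_{K_\infty^+})-1$.

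The missing ingredient is the normalization the paper imposes (by re-choosing the generators $d_j$, $d$). It is also the standing hypothesis on $p_r$ in Theorem \ref{MainTheorem}: there is an odd prime $p\mid d_r$ with $p\nmid d\prod_{i<r}d_i$. Then $p$ ramifies in $\Q(\sqrt{d_rd})$ but not in $K^{(r-1)}(\sqrt2)$, because $p\nmid 2\,\mathfrak{f}_{F/\Q}\,d_1\cdots d_{r-1}$. Hence $\sqrt{d_rd}\notin K^{(r-1)}(\sqrt2)$, so $W_{K^{(r-1)}_\infty(\sqrt{-t_r})}(2)=\{\pm1\}$, and $N_{G_r}\zeta=\pm1$ for all $\zeta\in W_{K_\infty}(2)$. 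This is exactly the inversion action on $M$. Your computation $\widehat H^0(G_r,M)\cong\Z/2\Z$, $\widehat H^1(G_r,M)=0$ then yields the $+1$.
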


\begin{proof} Consider the exact sequence of $ G_{r}$-modules
 $ 0 \rightarrow E_{K_{\infty}^{+}} \rightarrow E_{K_{\infty}} \rightarrow E_{K_{\infty}} / E_{K^{+}_{\infty}}
\rightarrow 0,  $ which induces the exact hexagon of groups and homomorphisms:
$$  \xymatrix{
& H^{1}(G_{r}, E_{K_{\infty}^{+}}) \ar@{->}[r] & H^{1}(G_{r}, E_{K_{\infty}})\ar@{->}[rd] & \\
H^{2}(G_{r}, E_{K_{\infty}} / E_{K_{\infty}^{+}}) \ar@{->}[ru]& & &H^{1}(G_{r}, E_{K_{\infty}} / E_{K_{\infty}^{+}})\ar@{->}[ld] \\
&H^{2}(G_{r}, E_{K_{\infty}})\ar@{->}[ul]^{\varphi}& H^{2}(G_{r}, E_{K_{\infty}^{+}})\ar@{->}[l] &
    }     $$
Next we compute $ H^{i}(G_{r}, E_{K_{\infty}} / E_{K_{\infty}^{+}}) \ (i = 1, 2). $ By Proposition
\ref{HasseUnitIndex} above, $ E_{K_{\infty}} = E_{K_{\infty}^{+}} W_{K_{\infty}}, $ so every element in
$ E_{K_{\infty}} / E_{K_{\infty}^{+}} $ is of the form $ \zeta E_{K_{\infty}^{+}} $ with $ \zeta\in W_{K_{\infty}}. $
We denote $$  \overline{W_{K_{\infty}}(2)} = \{\zeta E_{K_{\infty}^{+}} \in E_{K_{\infty}} / E_{K_{\infty}^{+}}: \
\zeta\in W_{K_{\infty}}(2)\},  $$
it is the subgroup of $ E_{K_{\infty}} / E_{K_{\infty}^{+}} $ generated by all elements of $ 2$-power orders.
Similarly, we denote
$$  \overline{W_{K_{\infty}}^{\prime}} = \{\zeta E_{K_{\infty}^{+}} \in E_{K_{\infty}} / E_{K_{\infty}^{+}}: \
\zeta \in W_{K_{\infty}}^{\prime}\},  $$
which is the subgroup of $ E_{K_{\infty}} / E_{K_{\infty}^{+}} $ generated by all elements of odd orders.
Since each element of $ E_{K_{\infty}} / E_{K_{\infty}^{+}} $ is either in $ \overline{W_{K_{\infty}}(2)} $ or
in $ \overline{W_{K_{\infty}}^{\prime}}, $ and $ \overline{W_{K_{\infty}}(2)} \cap \overline{W_{K_{\infty}}^{\prime}}
=\{1\}, $ we get $ E_{K_{\infty}} / E_{K_{\infty}^{+}} = \overline{W_{K_{\infty}}(2)} \times \overline{W_{K_{\infty}}^{\prime}}. $
Both $ \overline{W_{K_{\infty}}(2)} $ and $ \overline{W_{K_{\infty}}^{\prime}} $ are $ G_r$-modules, so by
\cite[Section I.2, Exercise 3, Page 24]{NSW}, we have
$$  \widehat{H}^{i}(G_{r}, E_{K_{\infty}} / E_{K_{\infty}^{+}}) =
\widehat{H}^{i}(G_{r}, \overline{W_{K_{\infty}}(2)}) \times \widehat{H}^{i}(G_{r}, \overline{W_{K_{\infty}}^{\prime}}). $$
Note that each element of $ \overline{W_{K_{\infty}}^{\prime}} $ is of odd order, so the map of multiplication
by $ 2 $ is an automorphism of $ \overline{W_{K_{\infty}}^{\prime}}, $ hence by \cite[Proposition 1.6.2]{NSW},
$ \widehat{H}^{i}(G_{r}, \overline{W_{K_{\infty}}^{\prime}}) = 0 $ for all $ i \in \Z. $ Therefore
$$  \widehat{H}^{i}(G_{r}, E_{K_{\infty}} / E_{K_{\infty}^{+}}) =
\widehat{H}^{i}(G_{r}, \overline{W_{K_{\infty}}(2)}) \quad \text{for all} \ i \in \Z. $$
Now we come to calculate $\widehat{H}^{i}(G_r,\overline{W_{K_{\infty}}(2)}). $ For the field $ K $ here, without loss of
generality, we may as well assume that there exists an odd prime number $ p $ such that $ p |d_{r} $ but
$ p \nmid d \prod_{i=1}^{r-1} d_{i}. $ Indeed, if otherwise, i.e., for a prime $ q, $ we have
$  q |d \prod_{i=1}^{r-1} d_{i} $ if $ q |d_{r}. $ Then fixes such a prime $ q, $ denote
$$ d^{\prime} = \begin{cases} d, & q \nmid d,  \\
{\rm sqf}(d d_{r}), & q | d,
\end{cases}
\quad \text{and} \quad d_{j}^{\prime} = \begin{cases} d_{j}, & q \nmid d_{j},  \\
{\rm sqf}(d_{j} d_{r}), & q | d_{j},
\end{cases} \quad (j = 1, \cdots, r - 1).
$$
We have $ q \nmid d^{\prime} \prod_{j=1}^{r-1} d_{j}^{\prime}, \
K = F(\sqrt{d_{1}^{\prime}}, \cdots, \sqrt{d_{r-1}^{\prime}}, \sqrt{d_{r}}, \sqrt{-d^{\prime}}), $
and $ \sqrt{d^{\prime}} \in K^{+}. $ \\
Thus for our assumption of $ K, $ we have $ \sqrt{d_{r}d} \not\in K^{(r-1)}(\sqrt{2}) =
F(\sqrt{2}, \sqrt{d_{1}}, \cdots, \sqrt{d_{r-1}}), $ then by Lemma \ref{RootsOfUnityinKInfty} above, we have $ W_{K_{\infty}(\sqrt{- d_{r}d})}(2)
= \{\pm 1\}, $ and so $ N_{G_{r}} W_{K_{\infty}}(2) \subseteq W_{K^{(r-1)}(\sqrt{d_{r}d})}(2) = \{\pm 1\} \subseteq E_{K_{\infty}^{+}}, $
hence $ N_{G_{r}} \overline{W_{K_{\infty}}(2)} = \{\overline{1}\} $ is trivial. To calculate
$ \widehat{H}^{0}(G_{r}, \overline{W_{K_{\infty}}(2)}), $ let $ \tau \in G_{r} \cong \Z / 2 \Z $ be the generator of $ G_{r}, $
and note that $$ \begin{aligned} (\overline{W_{K_{\infty}}(2)})^{G_{r}} &=
\{\zeta_{2^{t}} E_{K_{\infty}^{+}} \in \overline{W_{K_{\infty}}(2)} : \
\tau(\zeta_{2^{t}}) \zeta_{2^{t}}^{-1} \in E_{K_{\infty}^{+}}\} \\
&= \{\zeta_{2^{t}} E_{K_{\infty}^{+}} \in \overline{W_{K_{\infty}}(2)} : \
N_{G_{r}}(\zeta_{2^{n}}) \zeta_{2^{t}}^{-2} \in E_{K_{\infty}^{+}}\} \\
&= \{\zeta_{2^{t}} E_{K_{\infty}^{+}}
\in \overline{W_{K_{\infty}}(2)} : \ \zeta_{2^{t}}^{2} \in E_{K_{\infty}^{+}}\} =
\left<\sqrt{-1} \right> \cong \Z / 2 \Z,
\end{aligned}  $$
we have $$ H^{2}(G_{r}, \overline{W_{K_{\infty}}(2)}) \cong \widehat{H}^{0}(G_{r}, \overline{W_{K_{\infty}}(2)})
\cong (\overline{W_{K_{\infty}}(2)})^{G_{r}} / N_{G_{r}} \overline{W_{K_{\infty}}(2)} \cong \Z / 2 \Z. $$
For $ \widehat{H}^{-1}(G_{r}, \overline{W_{K_{\infty}}(2)}), $ consider the augmentation ideal
$ I_{G_{r}} = \left<\tau -1 \right>. $ For any $ \zeta_{2^{t}} E_{K_{\infty}^{+}} \in \overline{W_{K_{\infty}}(2)} $
with $ \zeta_{2^{t}} \in W_{K_{\infty}}(2), $ we have $ \zeta_{2^{t}}^{\tau - 1} =
N_{G_{r}}(\zeta_{2^{t}}) / \zeta_{2^{t}}^{2} = \pm1/\zeta_{2^{t}}^{2}, $ so $ (\zeta_{2^{t}}E_{K^{+}_{\infty}})^{\tau-1}
= (1 / \zeta^{2}_{2^{t}}) E_{K_{\infty}^+}. $ Also by Lemma \ref{RootsOfUnityinKInfty} above, we know that
$ {W_{K_{\infty}}(2)} $ contains all $ 2$-power roots of unity. Thus the equation $ x^{-2} = \alpha $ with
$ \alpha \in {W_{K_{\infty}}(2)} $ always has roots in $ {W_{K_{\infty}}(2)}. $ So, for each element
$ \zeta_{2^{t}} E_{K_{\infty}} $ of $ \overline{W_{K_{\infty}}(2)}, $ there exists an element
$ \zeta^{\prime} \in W_{K_{\infty}}(2) $ such that $ (\zeta^{\prime} E_{K_{\infty}})^{\tau -1} =
\zeta_{2^t} E_{K_{\infty}}. $ Hence $ I_{G_{r}} \overline{W_{K_{\infty}}(2)} =
\overline{W_{K_{\infty}}(2)}. $ Therefore
$$  \begin{aligned}
H^{1} (G_{r}, \overline{W_{K_{\infty}}(2)}) \cong \widehat{H}^{-1}(G_{r}, \overline{W_{K_{\infty}}(2)})
&\cong {\rm Ker}\left(N_{G_{r}}: \ \overline{W_{K_{\infty}}(2)} \to
\overline{W_{K_{\infty}}(2)}\right) / I_{G_{r}} \overline{W_{K_{\infty}}(2)} \\
&= \overline{W_{K_{\infty}}(2)} / \overline{W_{K_{\infty}}(2)} = 0.
\end{aligned}  $$
To sum up, we get $ \widehat{H}^{0}(G_{r}, E_{K_{\infty}} / E_{K_{\infty}^{+}}) \cong \Z / 2 \Z $ and
$ H^{1}(G_{r}, E_{K_{\infty}} / E_{K_{\infty}^{+}}) \cong 0. $  \\
As $ H^{2}(G_{r}, E_{K_{\infty}} / E_{K_{\infty}^{+}}) \cong
\widehat{H}^{0}(G_{r}, E_{K_{\infty}} / E_{K_{\infty}^{+}}) \cong \Z / 2 \Z, $ the homomorphism
$ \varphi $ in the above hexagon diagram is either $ 0 $ or surjective. If $ \varphi = 0, $ then
$ H^{2}(G_{r}, E_{K_{\infty}}) \cong H^{2}(G_{r}, E_{K^{+}_{\infty}}), $ and we have the exact sequence
$$  0 \rightarrow \Z /2 \Z \rightarrow H^{1}(G_{r}, E_{K_{\infty}^{+}}) \rightarrow H^{1}(G_{r}, E_{K_{\infty}})
\rightarrow 0.  $$
As  $ \# G_{r} = 2, $ we have $ 2 H^{1}(G_{r}, E_{K_{\infty}^+}) = 0 $ (see \cite[Proposition 1.6.1]{NSW}),
so $ H^{1}(G_{r}, E_{K_{\infty}^+})$ is an elementary $2$-group. From the above  exact sequence, we get
$$ \dim_{\F_{2}} H^{1}(G_{r}, E_{K_{\infty}^+}) = \dim_{\F_{2}} H^{1}(G_{r}, E_{K_{\infty}}) + 1.  $$
 i.e., $  \chi (G_{r}, E_{K_{\infty}}) = \chi (G_{r}, E_{K_{\infty}^{+}}) + 1. $ If $ \varphi$ is surjective,
then $ H^{1}(G_{r}, E_{K_{\infty}}) \cong H^{1}(G_{r}, E_{K^+_{\infty}}), $
and we have the exact sequence
$$   0 \rightarrow H^{2}(G_{r}, E_{K_{\infty}^{+}}) \rightarrow H^{2}(G_{r}, E_{K_{\infty}}) \rightarrow
 \Z /2 \Z \rightarrow 0,  $$  from which we can similarly get
$  \chi (G_{r}, E_{K_{\infty}}) = \chi (G_{r}, E_{K_{\infty}^{+}}) + 1. $
The proof is completed.
\end{proof}

\begin{lemma}\label{LambdaTwoForddividesProddi}
Let $ K = F(\sqrt{d_{1}}, \cdots, \sqrt{d_{r}}, \sqrt{-d}), $ where $ d_{1}, \cdots, d_{r} $
are square-free odd positive integers, $ F $ is a real abelian number field with conductor
$ \mathfrak{f}_{F/\Q} $ prime to $ d \prod_{i=1}^{r} d_{i}. $ Assume $ \sqrt{2} \notin K $
and $ \sqrt{d} \notin K^{+}(\sqrt{2}). $ \ Write
$ L:= F(\sqrt{d_{1}}, \cdots, \sqrt{d_{r}}, \sqrt{d}, \sqrt{-1}). $ Then if $ d \mid \prod_{i=1}^{r} d_{i}, $
we have $$  2 \lambda_{2}(K) = \lambda_{2}(L) - 1 + 2 \lambda_{2}(K^{+}) - \lambda_{2}(L^{+}).  $$
In particular, if Greenberg's conjecture holds, then $ 2 \lambda_{2}(K) = \lambda_{2}(L) - 1. $
\end{lemma}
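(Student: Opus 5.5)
The plan is to apply Iwasawa's formula (Theorem \ref{IwasawaRiemannHurewitz}) twice to the quadratic extension generated by $\sqrt{d}$: once to $L_{\infty}/K_{\infty}$ and once to $L_{\infty}^{+}/K_{\infty}^{+}$. Here $L = K(\sqrt{d}) = K(\sqrt{-1})$ and $L^{+} = K^{+}(\sqrt{d})$. Subtracting the two formulas leaves only the difference of the Herbrand-type quotients, and I will show that this difference equals $1$.

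\textbf{Step 1 (the ramification sums vanish).} Every odd prime $p \mid d$ also divides $\prod_{i=1}^{r} d_{i}$, because $d \mid \prod_{i=1}^{r} d_i$. By the first part of Lemma \ref{Lemma 2.10}, no place $\mathfrak{p} \nmid 2$ of $K^{+}$ ramifies in $K^{+}(\sqrt{d})$. Hence $L^{+}_{\infty}/K^{+}_{\infty}$ is unramified outside $2$. Since $L = K L^{+}$, the extension $L_{\infty}/K_{\infty}$ is also unramified outside $2$.

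At the infinite places there is nothing to check. $K_{\infty}$ is totally imaginary, and $L^{+} = K^{+}(\sqrt{d})$ with $d > 0$ is totally real. So in both applications of Theorem \ref{IwasawaRiemannHurewitz} the sum $\sum_{w\nmid 2}(e(w/v)-1)$ is $0$. Writing $G = \mathrm{Gal}(L_{\infty}/K_{\infty}) \cong \mathrm{Gal}(L^{+}_{\infty}/K^{+}_{\infty})$, we get
$$\lambda_2(L) = 2\lambda_2(K) + \chi(G, E_{L_\infty}), \qquad \lambda_2(L^{+}) = 2\lambda_2(K^{+}) + \chi(G, E_{L^{+}_\infty}).$$
Subtracting the second from the first shows that the lemma is equivalent to
$$\chi(G, E_{L_\infty}) = \chi(G, E_{L^{+}_\infty}) + 1.$$

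\textbf{Step 2 (the key identity).} I will obtain this from Proposition \ref{CohomologyEqualPlusOne}, viewing $L = F(\sqrt{d_1},\dots,\sqrt{d_r},\sqrt{d},\sqrt{-1})$ as a CM field. It has $r+1$ real generators, the last one being $d$, and imaginary generator $\sqrt{-1}$, i.e. "$d$" $= 1$ in the notation of that proposition.

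The hypotheses check out as follows.
\begin{enumerate}[$1)$]
\item $\sqrt{1} \in L^{+}(\sqrt{2})$ holds trivially.
\item $\sqrt{2} \notin L^{+}$: the field $L^{+}$ lies in $K^{+}(\sqrt{d})$, and $\sqrt{2} \notin K$ together with $\sqrt{d}\notin K^{+}(\sqrt{2})$ excludes $\sqrt{2}$ from it.
\item The conductor condition on $F$ is inherited.
\item Condition 1) of Proposition \ref{HasseUnitIndex} holds for $L$, so the Hasse unit index is $1$.
\end{enumerate}

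\textbf{Step 3 (main obstacle).} The main obstacle is that the proof of Proposition \ref{CohomologyEqualPlusOne} uses a "without loss of generality" reduction. That reduction arranges an odd prime dividing the last real generator only, and its purpose is to make the norm $N_{G}\overline{W_{L_\infty}(2)}$ trivial. I do not want to rely on that reduction here. Instead I will verify triviality directly.

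The norm satisfies $N_G W_{L_\infty}(2) \subseteq W_{K_\infty}(2)$. By Lemma \ref{RootsOfUnityinKInfty}, together with the hypothesis $\sqrt{d}\notin K^{+}(\sqrt{2})$, we have $W_{K_\infty}(2) = \{\pm1\} \subseteq E_{L^{+}_\infty}$. So the norm is trivial in $E_{L_\infty}/E_{L^{+}_\infty}$.

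With this in hand, the remaining computation of Proposition \ref{CohomologyEqualPlusOne} goes through unchanged. That is, $\widehat{H}^0(G, E_{L_\infty}/E_{L^+_\infty}) \cong \Z/2\Z$ and $H^1(G, E_{L_\infty}/E_{L^+_\infty}) = 0$, and the exact hexagon argument then gives the required $+1$.

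\textbf{Step 4 (conclusion).} This establishes the displayed formula. Under Greenberg's conjecture, $\lambda_2(K^{+}) = \lambda_2(L^{+}) = 0$, since both fields are totally real. The formula then reduces to $2\lambda_2(K) = \lambda_2(L) - 1$.
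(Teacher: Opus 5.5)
Your proposal is correct and follows essentially the paper's route: apply Theorem~\ref{IwasawaRiemannHurewitz} to $L_\infty/K_\infty$ and to $L^{+}_\infty/K^{+}_\infty$, kill the ramification terms using $L=K(\sqrt{-1})$ and Lemma~\ref{Lemma 2.10} with $d\mid\prod_i d_i$, and get the $+1$ from Proposition~\ref{CohomologyEqualPlusOne} applied to $L$ with imaginary generator $\sqrt{-1}$. Your Step 3 makes the argument more rigorous than the paper's: the paper invokes that proposition directly, but its ``without loss of generality'' prime cannot be arranged here (every prime dividing $d$ also divides $\prod_i d_i$), whereas your check $N_G W_{L_\infty}(2)\subseteq W_{K_\infty}(2)=\{\pm1\}$, via Lemma~\ref{RootsOfUnityinKInfty} and $\sqrt{d}\notin K^{+}(\sqrt{2})$, is exactly what the cohomology computation needs.
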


\begin{proof} As discussed before, let
$ G = \text{\rm Gal}(L^{+} / F(\sqrt{d_{1}}, \cdots, \sqrt{d_{r}})) \cong \text{\rm Gal}(L_{\infty} /K_{\infty}) $
and $ K = L^{(r)}(\sqrt{d \cdot(-1)}). $ By Proposition \ref{CohomologyEqualPlusOne} above,
$  \chi (G, E_{L_{\infty}}) = \chi (G, E_{L_{\infty}^{+}}) + 1. $  \\
Also, for $ L_{\infty} / K_{\infty}, $ by Theorem \ref{IwasawaRiemannHurewitz} above, \\
$ \lambda_{2}(L) = 2 \lambda_{2}(K) + \sharp \{v \nmid 2 : v \ \text{is a place of} \ K_{\infty} \ \text{ramified in} \
L_{\infty} /K_{\infty} \} +  \chi (G, E_{L_{\infty}}). $ \ Since $ K(\sqrt{-1})/ K $ is unramified outside $ 2, $
and any $ \Z_{2}$-extension is unramified outside $ 2, $
hence $ L_{\infty} / K_{\infty} $ is unramified outside $ 2. $ Therefore
$$  \lambda_{2}(L) = 2 \lambda_{2}(K) + \chi (G, E_{L_{\infty}^{+}}) + 1. $$
Moreover, for $ L_{\infty}^{+} /F_{\infty}(\sqrt{d_{1}}, \cdots, \sqrt{d_{r}}), $ by Theorem \ref{IwasawaRiemannHurewitz}
above, we have $$ \begin{aligned} \lambda_{2}(L^{+}) = &2 \lambda_{2}(F(\sqrt{d_{1}}, \cdots, \sqrt{d_{r}})) +
\chi (G, E_{L_{\infty}^{+}}) + \\
&\sharp \{v \nmid 2 : v \ \text{is a place of} \
F_{\infty}(\sqrt{d_{1}}, \cdots, \sqrt{d_{r}}) \ \text{ramified in} \
L_{\infty}^{+} /F_{\infty}(\sqrt{d_{1}}, \cdots, \sqrt{d_{r}}) \}.
\end{aligned}  $$
Since $ d \mid \prod_{i=1}^{r} d_{i}, $ by Lemma \ref{Lemma 2.10}, we know that $ L^{+} / F(\sqrt{d_{1}}, \cdots, \sqrt{d_{r}}) $
is unramified outside $ 2. $ Hence \
$  \lambda_{2}(L^{+}) = 2 \lambda_{2}(F(\sqrt{d_{1}}, \cdots, \sqrt{d_{r}})) +
\chi (G, E_{L_{\infty}^{+}}), $ and so \
$  2 \lambda_{2}(K) = \lambda_{2}(L) - 1 + 2 \lambda_{2}(F(\sqrt{d_{1}}, \cdots, \sqrt{d_{r}}))
- \lambda_{2}(L^{+}).  $  \ The proof is completed.
\end{proof}

\begin{theorem}\label{MainTheorem}
Let $ d_{1}, \cdots, d_{r}, d $ be square-free positive integers, $ F $
be a real abelian number field with conductor $ \mathfrak{f}_{F/\Q} $ prime to
$ d \cdot \prod_{i=1}^{r} d_{i}. $ Let $ K = F(\sqrt{d_{1}}, \cdots, \sqrt{d_{r}}, \sqrt{-d}) $ be a degree
$ 2^{r+1} $ extension over $ F. $ Assume $ \sqrt{2} \notin K. $ Let $ K_{\infty} $ be the
cyclotomic $ \Z_{2}$-extension of $ K. $
Denote $ K^{(i)} = F(\sqrt{d_{1}}, \cdots, \sqrt{d_{i}}) $ and  $ t_{i} = d_{i} \cdots d_{r} d \ (1 \leq i \leq r). $
Assume there exists an odd prime $ p_{r} $ such that $ p_{r} \mid d_{r} $ but $ p_{r} \nmid d \prod_{i=1}^{r-1} d_{i}. $
For any positive $ n, $ denote $ \text{\rm sqf}(n) = \prod_{\text{prime} \ p|n, 2 \nmid \nu_{p}(n)} p. $
Also, for $ i \in \{1, \cdots, r\}, $ denote  \\
$  s_{i} = \sharp \{v : v \ \text{is a place of} \ K_{\infty}^{(i-1)}(\sqrt{-t_{i}}) \ \text{over a
prime number} \ p \ \text{with} \ p \mid (d_{i}, \text{\rm sqf}(t_{i+1})) \ \text{and} \ p \nmid
2 \prod_{t=1}^{i-1} d_{t} \}, $ \\ and \ $ f_{i} = \sharp \{v : v \ \text{is a place of} \
K_{\infty}^{(i-1)} \ \text{over a prime number} \ p \ \text{with} \ p \mid d_{i} \ \text{and} \ p \nmid
2 \prod_{t=1}^{i-1} d_{t} \}. $  Then
$$ \lambda_{2}(K) = \lambda_{2}(K(\sqrt{2})) =
2^{r} \lambda_{2}(F(\sqrt{-t_{1}})) + \lambda_{2}(K^{+}) - 2^{r} \lambda_{2}(F)
+ \sum_{i=1}^{r} 2^{r-i} (s_{i} - f_{i}) + \delta,  $$
where $ \delta = \left \{\begin{array}{l} 1, \quad \quad \text{if} \ \sqrt{d} \in K^{+}(\sqrt{2}),  \\
0, \quad \quad  \text{if otherwise}.
\end{array} \right. $
\end{theorem}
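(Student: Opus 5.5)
The first equality $\lambda_2(K)=\lambda_2(K(\sqrt 2))$ holds because $K$ and $K(\sqrt 2)$ have the same cyclotomic $\Z_2$-extension. So we may assume $d,d_1,\dots,d_r$ odd. We then climb the tower of CM fields
$$F(\sqrt{-t_1})\subset K^{(1)}(\sqrt{-t_2})\subset\cdots\subset K^{(r-1)}(\sqrt{-t_r})\subset K,$$
where each step is the quadratic extension with group $G_i$. In parallel we climb the real tower $F=K^{(0)}\subset K^{(1)}\subset\cdots\subset K^{(r)}=K^{+}$. At each step $i$ we apply Theorem \ref{IwasawaRiemannHurewitz} twice, once to the CM step and once to the real step:
$$\lambda_2(K^{(i)}(\sqrt{-t_{i+1}}))=2\lambda_2(K^{(i-1)}(\sqrt{-t_i}))+R_i^{CM}+\chi(G_i,E^{CM}_i),$$
$$\lambda_2(K^{(i)})=2\lambda_2(K^{(i-1)})+R_i^{+}+\chi(G_i,E^{+}_i).$$
Here $R$ counts places not over $2$ ramified in the $\infty$-level extension. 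Subtracting the two equations eliminates the unit terms, provided the Euler characteristics agree, or differ by exactly $1$ at the last step when $\sqrt d\in K^{+}(\sqrt 2)$. Telescoping with weights $2^{r-i}$ then gives
$$\lambda_2(K)-\lambda_2(K^+)=2^r\big(\lambda_2(F(\sqrt{-t_1}))-\lambda_2(F)\big)+\sum_i 2^{r-i}(R_i^{CM}-R_i^+)+\delta .$$

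For the unit terms we use Propositions \ref{CohomologyEqual} and \ref{CohomologyEqualPlusOne}, applied to the field $K^{(i)}(\sqrt{-t_{i+1}})$ in place of $K$, with $d$ replaced by $t_{i+1}$. To get Hasse unit index $1$ we invoke Proposition \ref{HasseUnitIndex} 2). The hypothesis that $p_r\mid d_r$ but $p_r\nmid d\prod_{j<r}d_j$ guarantees that $p_r$ divides $t_{i+1}$ for $i<r$ and does not divide $d_1\cdots d_i$, so condition 2) holds at every intermediate level $i<r$. At those levels $\sqrt{t_{i+1}}\notin K^{(i)}(\sqrt2)$ because of $p_r$, so Lemma \ref{RootsOfUnityinKInfty} gives $W(2)=\{\pm1\}$ and Proposition \ref{CohomologyEqual} gives $\chi^{CM}=\chi^{+}$. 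At the top level $i=r$ there are two cases. If $\sqrt d\in K^+(\sqrt2)$, Proposition \ref{CohomologyEqualPlusOne} gives the extra $+1$, which is $\delta$. Otherwise we need the unit index to be $1$, either via an odd prime of $d$ prime to $\prod d_i$, or via the reduction trick used in the proof of Proposition \ref{CohomologyEqualPlusOne}, and then Proposition \ref{CohomologyEqual} applies. This top-level case analysis, especially the case where every prime of $d$ divides $\prod d_i$, is the step I expect to be the main obstacle. There I would fall back on the reduction used in Proposition \ref{CohomologyEqualPlusOne} (rewriting the $d_j$ and $d$ modulo the chosen prime).

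For the ramification counts, Lemma \ref{Lemma 2.10} says a place $v\nmid 2$ of $K^{(i-1)}_\infty$ ramifies in $K^{(i)}_\infty$ exactly when it lies over $p\mid d_i$ with $p\nmid\prod_{t<i}d_t$. Hence $R_i^+=f_i$. For the CM step $K^{(i-1)}_\infty(\sqrt{-t_i})\to K^{(i)}_\infty(\sqrt{-t_{i+1}})$, we have $K^{(i)}(\sqrt{-t_{i+1}})=K^{(i-1)}(\sqrt{-t_i},\sqrt{d_i})$, and it also contains $\sqrt{-t_{i+1}}$. A prime $p\mid d_i$ with $p\nmid 2\prod_{t<i}d_t$ ramifies in adjoining $\sqrt{d_i}$ unless it is already ramified in $\sqrt{-t_i}$. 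That happens precisely when $p$ divides $t_i$ to odd power, i.e. when $p\nmid\mathrm{sqf}(t_{i+1})$. In that case the ramification is absorbed, because $\sqrt{-t_{i+1}}$ is unramified at $p$. So $R_i^{CM}$ counts places over primes $p\mid (d_i,\mathrm{sqf}(t_{i+1}))$, which is $s_i$. Here each ramified place has $e-1=1$. Substituting $R_i^{CM}=s_i$ and $R_i^+=f_i$ into the telescoped identity gives the stated formula.
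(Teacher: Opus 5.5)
Your telescoping is the paper's induction written out, and it is correct wherever the top-level Euler characteristics are under control. For $i<r$ the prime $p_r$ gives Proposition~\ref{HasseUnitIndex}\,2) and $W(2)=\{\pm1\}$. At $i=r$, the case of an odd $p\mid d$ with $p\nmid\prod d_i$ is handled by Proposition~\ref{CohomologyEqual}, and the case $\sqrt d\in K^+(\sqrt2)$ by Proposition~\ref{CohomologyEqualPlusOne}. Your identifications $R_i^{CM}=s_i$ and $R_i^+=f_i$ are also right.

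\textbf{The gap.} The remaining case, $\sqrt d\notin K^+(\sqrt2)$ with $d\mid\prod d_i$, is the one you flagged, and your proposed fallback does not close it. The rewriting trick in the proof of Proposition~\ref{CohomologyEqualPlusOne} only changes the chosen generators so that some prime divides $d_r$ alone. It changes neither $K$ nor $K^+$, and the set of odd primes ramified in $K/K^+$ is intrinsic.

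In this case every odd $p\mid d$ already ramifies in $K^+$. Since tame inertia in the multi-quadratic extension $K/F$ has order at most $2$, $K/K^+$ is unramified at all odd primes, so neither criterion of Proposition~\ref{HasseUnitIndex} can ever be arranged.

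The index genuinely can be $2$. Take $F=\Q$ and $K=\Q(\sqrt{15},\sqrt{-3})$, with $d_1=15$, $d=3$, $p_1=5$. The element $\eta=(\sqrt{-3}+\sqrt{-5})/\sqrt2\in K_1$ satisfies $\eta^2=-(4\mp\sqrt{15})$, so $\eta$ is a unit. If $\eta\in E_{K_\infty^+}W_{K_\infty}$, then $\sqrt{-1}\in K_\infty$, which is false since $\sqrt3\notin K^+(\sqrt2)$. So Proposition~\ref{CohomologyEqual} is unavailable. Moreover, $E_{K_\infty}/E_{K_\infty^+}$ now has a $2$-part of order $2$ whose Tate cohomology does not vanish, so nothing in your argument yields $\chi(G_r,E_{K_\infty})=\chi(G_r,E_{K_\infty^+})$.

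\textbf{How the paper handles it.} The paper never computes this unit index. It passes to $L=K(\sqrt{-1})=F(\sqrt{d_1},\dots,\sqrt{d_r},\sqrt d,\sqrt{-1})$, whose imaginary generator is $-1$, so $L$ falls in the $\delta=1$ case. It applies Theorem~\ref{IwasawaRiemannHurewitz} to two extensions and subtracts:
\begin{itemize}
\item $L_\infty/K_\infty$, which is unramified outside $2$, with Proposition~\ref{CohomologyEqualPlusOne} supplying the $+1$;
\item $L^+_\infty/K^+_\infty$, which is unramified outside $2$ precisely because $d\mid\prod d_i$.
\end{itemize}
This gives Lemma~\ref{LambdaTwoForddividesProddi}: $2\lambda_2(K)=\lambda_2(L)-1+2\lambda_2(K^+)-\lambda_2(L^+)$.

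It then substitutes the already-proved $\delta=1$ formula for $L$, with generators ordered $d_1,\dots,d_{r-1},d,d_r$ so that $p_r$ still isolates the last one. Comparing the primed counts with yours gives $s'_{r+1}=0$, $s'_r=s_r$, $s'_i=s_i$ and $f'_i=f_i$ for $i<r$. Also $f_r=f'_r+\tfrac12 f'_{r+1}$, because primes of $d_r$ prime to $d\prod_{i<r}d_i$ split in $K^{(r-1)}(\sqrt d)/K^{(r-1)}$. The result is the stated formula with $\delta=0$.

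To complete your proof you need either this detour or a direct computation of $\chi(G_r,E_{K_\infty})$ in the index-$2$ situation.
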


\begin{proof} Since $ K $ and $ K(\sqrt{2}) $ has the same cyclotomic $ \Z_{2}$-extension, it can be easily
seen that all the quantities $ s_{i}, f_{i} \ (i = 1, \cdots, r) $ and $ \delta $ are independent of
the $ 2$-factor of $ d \prod_{i=1}^{r} d_{i}. $ So we may as well assume that
$ 2 \nmid d_{1} \cdot \cdots d_{r} \cdot d. $ By \cite[Chapter II, Exercise 1.2.5]{Gra},
$$ \begin{aligned}  &s_{i} = \sharp \left\{v : v \nmid 2 \ \text{is a place of} \ K_{\infty}^{(i-1)}(\sqrt{-t_{i}}) \
 \text{ramified in} \ K_{\infty}^{(i)}(\sqrt{-t_{i+1}}) \right\}, \\
&f_{i} = \sharp \left\{v : v \nmid 2 \ \text{is a place of} \ K_{\infty}^{(i-1)} \ \text{ramified in} \
K_{\infty}^{(i)} \right\}.
\end{aligned}  $$
We divide our discuss in three cases as follows:
\begin{enumerate}[$a)$]
\item  There exists an odd prime number $ p $ such that $ p \mid d $ and $ p \nmid \prod_{i=1}^{r} d_{i}; $
\item  $ \sqrt{d} \in K^{+}(\sqrt{2});  $
\item  $ \sqrt{d} \notin K^{+}(\sqrt{2}) $ and $ d \mid \prod_{i=1}^{r} d_{i}. $
\end{enumerate}
For cases a) and b), we use induction on $ r. $ If $ r = 1, $ then $ K = F(\sqrt{d_{1}}, \sqrt{-d}) $
and $ t_{1} = d_{1} d. $ For $ K_{\infty} / F(\sqrt{-t_{1}}) $ and $ K_{\infty}^{+} / F_{\infty}, $
by Theorem \ref{IwasawaRiemannHurewitz} above, we have
$  \lambda_{2}(K) = 2 \lambda_{2} (F(\sqrt{-t_{1}})) + s_{1} + \chi (G_{1}, E_{K_\infty}) $
and $ \lambda_{2}(K^{+}) = 2 \lambda_{2}(F) + f_{1} + \chi (G_{1}, E_{K_{\infty}^{+}}). $
Also by Propositions \ref{CohomologyEqual} and \ref{CohomologyEqualPlusOne} above, we have
$$  \chi (G_{1}, E_{K_{\infty}}) = \begin{cases}
        \chi (G_{1}, E_{K_{\infty}^{+}}), & \text{case} \ a) \\
        \chi (G_{1}, E_{K_{\infty}^{+}}) + 1. & \text{case} \ b)
    \end{cases}   $$
Therefore, $$ \lambda_{2}(K) = \begin{cases}
        2 \lambda_{2} (F(\sqrt{-t_{1}})) + \lambda_{2}(K^{+}) - 2 \lambda_{2}(F) + (s_{1} - f_{1}),& \text{case} \ a) \\
       2 \lambda_{2} (F(\sqrt{-t_{1}})) + \lambda_{2}(K^{+}) - 2 \lambda_{2}(F) + (s_{1} - f_{1})
+ 1. & \text{case} \ b)
    \end{cases}   $$ So our conclusion holds for $ r = 1. $  \\
Now assume our conclusion holds for $ r = n-1, $ and we consider $ r = n. $ As discussed before, for
$ K_{\infty} / K_{\infty}^{(n-1)}(\sqrt{-t_{n}}) $ and $ K_{\infty}^{+} / F_{\infty}^{(n-1)}, $
by Theorem \ref{IwasawaRiemannHurewitz} above, we have
$$  \lambda_{2}(K) = 2 \lambda_{2}(K^{(n-1)}(\sqrt{-t_{n}})) + s_{n} + \chi(G_{n}, E_{K_\infty}) \
\text{and} \ \lambda_{2}(K^{+}) = 2 \lambda_{2}(K^{(n-1)}) + f_{n} + \chi(G_{n}, E_{K_{\infty}^{+}}). $$
Also by Proposition \ref{CohomologyEqual} and \ref{CohomologyEqualPlusOne} above,
$ \chi(G_{n}, E_{K_\infty}) = \chi(G_{n}, E_{K_\infty}^{+}) + \delta. $
So  $$  \lambda_{2}(K) = 2 \lambda_{2}(K^{(n-1)}(\sqrt{-t_{n}})) + \lambda_{2}(K^{+}) -
2 \lambda_{2}(K^{(n-1)}) + (s_{n} - f_{n}) + \delta,  $$
where $ \delta =  \begin{cases} 0, & \text{case} \ a) \\
1. & \text{case} \ b)
\end{cases}  $  \\
For $  \lambda_{2}(K^{(n-1)}(\sqrt{-t_{n}})), $ notice that, as assumed, there exists an odd prime number
$ p $ such that $ p \mid t_{n} $ and $ p \nmid \prod_{i=1}^{n-1} d_{i}. $ So by induction,
$$  \lambda_{2}(K^{(n-1)}(\sqrt{-t_{n}})) = 2^{r-1} \lambda_{2}(F(\sqrt{-t_{1}})) +
\lambda_{2}(K^{(n-1)}) - 2^{n-1} \lambda_{2}(F) + \sum_{i=1}^{n-1} 2^{n-1-i} (s_{i} - f_{i}). $$
Therefore,  $$  \lambda_{2}(K) =
    \begin{cases}
        2^{n} \lambda_{2}(F(\sqrt{-t_{1}})) + \lambda_{2}(K^{+})  - 2^{n} \lambda_{2}(F) +
\sum_{i=1}^{n} 2^{n-i} (s_{i} - f_{i}), & \text{case} \ a) \\
        2^{n} \lambda_{2}(F(\sqrt{-t_{1}})) + \lambda_{2}(K^{+})  - 2^{n} \lambda_{2}(F) +
\sum_{i=1}^{n} 2^{n-i} (s_{i} - f_{i}) + 1. & \text{case} \ b)
    \end{cases}   $$
That is, the conclusion holds for $ r = n. $ Hence by inductive principle, our conclusion
holds for cases a) and b).   \\
Lastly, we consider case c). Note that $ \sqrt{2} \notin K(\sqrt{d}), $ so by
Lemma \ref{LambdaTwoForddividesProddi} above, we have
$ 2 \lambda_{2}(K) = \lambda_{2}(L) - 1 + 2\lambda_{2}(K^{+}) - \lambda_{2}(L^{+}), $
where
$$   \begin{aligned}  L= K(\sqrt{-1}) &= F(\sqrt{d_{1}}, \cdots, \sqrt{d_{r}}, \sqrt{d}, \sqrt{-1})  \\
&= F(\sqrt{d_{1}^{\prime}}, \cdots, \sqrt{d_{r+1}^{\prime}}, \sqrt{-1}),
\end{aligned} \quad \ \text{and} \
d_{i}^{\prime} =\begin{cases}
        d_{1}, & 1 \leq i \leq r-1,  \\
        d,     &  i = r, \\
        d_{r}, & i + r + 1.
    \end{cases}   $$
Denote $ t_{i}^{\prime} = d_{i}^{\prime} \cdots d_{r+1}^{\prime} \ (1 \leq i \leq r+1) $
and $ t_{r+2}^{\prime} = 1. $ From the discussion of case b) above, we know  \\
 $$  \begin{aligned} \lambda_{2}(L) &= 2^{r+1} \lambda_{2} (F(\sqrt{-t_{1}^{\prime}})) +
\lambda_{2}(F(\sqrt{d_{1}^{\prime}}, \cdots, \sqrt{d_{r+1}^{\prime}})) - \\
&2^{r+1} \lambda_{2}(F) + \sum_{i=1}^{r+1} 2^{r+1-i} (s_{i}^{\prime} - f_{i}^{\prime}) + 1 \\
&= 2^{r+1} \lambda_{2}(F(\sqrt{-t_{1}})) + \lambda_{2}(L^{+}) - 2^{r+1} \lambda_{2}(F) +
\sum_{i=1}^{r+1} 2^{r+1-i} (s_{i}^{\prime} - f_{i}^{\prime}) + 1, \end{aligned}  $$
where $ s_{i}^{\prime} = \sharp \{v : v \nmid 2 \ \text{is a place in} \
L_{\infty}^{(i-1)}(\sqrt{-t_{i}^{\prime}}) \ \text{ramifying in} \
L_{\infty}^{(i)}(\sqrt{-t_{i+1}^{\prime}})\}, $ \\
$  f_{i}^{\prime} = \sharp \{v : v \nmid 2 \ \text{is a place in} \
L_{\infty}^{(i-1)} \ \text{ramifying in} \ L_{\infty}^{(i)} \}. $  So
$$  \lambda_{2}(K) = 2^{r} \lambda_{2}(F(\sqrt{-t_{1}})) + \lambda_{2}(K^{+}) - 2^{r} \lambda_{2}(F) +
\sum_{i=1}^{r+1} 2^{r-i} (s_{i}^{\prime} - f_{i}^{\prime}). $$
Since $ L_{\infty}^{(r+1)}(\sqrt{-t_{r+2}^{\prime}}) = L_{\infty}^{(r)}(\sqrt{-t_{r+1}^{\prime}})
(\sqrt{-1}), $ we have $ s_{r+1}^{\prime} = 0. $ For $ 1 \leq i \leq r-1, $ notice that
$ L^{(i)} = F(\sqrt{d_{1}^{\prime}}, \cdots, \sqrt{d_{}^{\prime}}) = K^{(i)}$, and $ d \mid
\prod_{i=1}^{r} d_{i}. $ So \\
$ s_{i}^{\prime} = s_{i}, \ f_{i}^{\prime} = f_{i} \ (1 \leq i \leq r-1), $  \\
$ s_{r}^{\prime} = \sharp \{v : v \ \text{is a prime of} \
K_{\infty}^{(r-1)}(\sqrt{-t_{r}}) \ \text{over a prime number} \ p \ \text{with} \ p \mid d \
\text{and} \ p \nmid \prod_{i=1}^{r-1} d_{i} \}, $  \\
$ f_{r}^{\prime} = \sharp \{v : v \ \text{is a prime of} \
K_{\infty}^{(r-1)} \ \text{over a prime number} \ p \ \text{with} \ p \mid d \ \text{and} \ p \nmid
 \prod_{i=1}^{r-1} d_{i} \}, $  \\
$ f_{r+1}^{\prime} = \sharp \{v : v \ \text{is a prime of} \
L_{\infty}^{(r)} \ \text{over a prime number} \ p \ \text{with} \ p \mid d_{r} \ \text{and} \ p \nmid
d \prod_{i=1}^{r-1} d_{i} \}. $  \\
On the other hand, $$  \begin{aligned} s_{r} &= \sharp \{v : v \ \text{is a prime of} \
K_{\infty}^{(r-1)}(\sqrt{-t_{r}}) \ \text{over a prime number} \ p \ \text{with} \ p \mid d \
\text{and} \ p \nmid \prod_{i=1}^{r-1} d_{i} \} \\
&= s_{r}^{\prime};  \\
f_{r} &= \sharp \{v : v \ \text{is a prime of} \ K_{\infty}^{(r-1)} \ \text{over a prime number} \
p \ \text{with} \ p \mid d_{r} \ \text{and} \ p \nmid \prod_{i=1}^{r-1} d_{i} \}  \\
&= f_{r}^{\prime} + \sharp \{v : v \ \text{is a prime of} \ K_{\infty}^{(r-1)} \ \text{over a prime number} \
p \ \text{with} \ p \mid d_{r} \ \text{and} \ p \nmid d \prod_{i=1}^{r-1} d_{i} \}.
 \end{aligned}    $$
Notice that, for $ p > 2, $ if $ p \mid d_{r} $ and $ p \nmid d \prod_{i=1}^{r-1} d_{i}, $
then by \cite[Chapter II, Exercise 1.2.5]{Gra}, we know that all the primes over $ p $ split completely in
$ L^{(r)} / K^{(r-1)} = K^{(r-1)}(\sqrt{d}) / K^{(r-1)}. $  Hence  $ f_{r} = f_{r}^{\prime} +
\frac{1}{2} f_{r+1}^{\prime}. $ To sum up, we obtain
$$  \lambda_{2}(K) = 2^{r} \lambda_{2}(F(\sqrt{-t_{1}})) + \lambda_{2}(K^{+}) - 2^{r} \lambda_{2}(F)
+ \sum_{i=1}^{r} 2^{r-i} (s_{i} - f_{i}). $$
The proof is completed.
\end{proof}

\begin{corollary}\label{ComputeSiFi}
Let $ K = \Q(\sqrt{d_{1}}, \cdots, \sqrt{d_{r}}, \sqrt{-d}), $  where
$ d_{1}, \cdots, d_{r}, d $ are square-free positive integers. Assume $ [K : \Q] = 2^{r+1} $
and there exists a prime $ p_{r} > 2 $ such that $ p_{r} \mid d_{r} $ but $ p_{r} \nmid d \prod_{i=1}^{r-1} d_{i}. $
Then for any integer $ 1 \leq i\leq  r, $ we have
$$  s_{i} = \sum_{\substack{\text{primes} \ p | (d_{i}, \text{\rm sqf}(t_{i+1})), \\ p \nmid
2 \prod_{t=1}^{i-1} d_{t}}} 2^{\nu_{2}(p^{2} - 1) + i - 3} \quad \text{and} \quad f_{i} =
\sum_{\substack{\text{primes} \ p | d_{i}, \\ p \nmid 2 \prod_{t=1}^{i-1} d_{t}}} 2^{\nu_{2}(p^{2} - 1) + i - 4}. $$
\end{corollary}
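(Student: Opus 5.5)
The plan is to count places directly. First we reduce to the cyclotomic $\Z_{2}$-extension $\Q_{\infty}$ of $\Q$. Then we apply Corollary \ref{QuadraticExtofQInfiniteDecom} 2) for splitting in the multi-quadratic layer, and Corollary \ref{DecompositionsOfOddPrimeInQn} for the number of primes of $\Q_{\infty}$ above $p$. With $F = \Q$ we have $K^{(i-1)}_{\infty} = \Q_{\infty}(\sqrt{d_{1}}, \cdots, \sqrt{d_{i-1}})$. Both $s_{i}$ and $f_{i}$ are sums over the relevant odd primes $p$ of the number of places above $p$, so it suffices to handle each $p$ separately.

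\emph{The count $f_{i}$.} Fix an odd prime $p \mid d_{i}$ with $p \nmid \prod_{t=1}^{i-1} d_{t}$. Since $[K:\Q] = 2^{r+1}$, the field $K^{(i-1)}$ has degree $2^{i-1}$ over $\Q$. It is linearly disjoint from $\Q_{\infty}$, because we may assume $\sqrt{2} \notin K$. Hence $[K^{(i-1)}_{\infty} : \Q_{\infty}] = 2^{i-1}$. By Corollary \ref{QuadraticExtofQInfiniteDecom} 2), applied with the list $d_{1}, \cdots, d_{i-1}$ (none divisible by $p$), every prime of $\Q_{\infty}$ above $p$ splits completely in $K^{(i-1)}_{\infty}/\Q_{\infty}$. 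By Corollary \ref{DecompositionsOfOddPrimeInQn}, in the stable range $n > \nu_{2}(p^{2}-1)-3$, the number of primes of $\Q_{\infty}$ above $p$ is $2^{\nu_{2}(p^{2}-1)-3}$. Therefore $p$ contributes $2^{i-1} \cdot 2^{\nu_{2}(p^{2}-1)-3} = 2^{\nu_{2}(p^{2}-1)+i-4}$ places, and summing over $p$ gives the formula for $f_{i}$.

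\emph{The count $s_{i}$.} Now take $p$ odd with $p \mid d_{i}$, $p \mid \text{\rm sqf}(t_{i+1})$ and $p \nmid \prod_{t=1}^{i-1} d_{t}$. Then $p$ divides $t_{i} = d_{i} t_{i+1}$ to an even power, so $p \nmid \text{\rm sqf}(t_{i})$. Also $\Q_{\infty}(\sqrt{-t_{i}}) = \Q_{\infty}(\sqrt{-\text{\rm sqf}(t_{i})})$.

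\begin{enumerate}[$1)$]
\item We check the degree $[K^{(i-1)}_{\infty}(\sqrt{-t_{i}}) : \Q_{\infty}] = 2^{i}$. The prime $p_{r}$ divides $d_{r}$ exactly once and divides none of $d, d_{1}, \cdots, d_{r-1}$. Hence $p_{r} \,\|\, t_{i}$, while $p_{r}$ divides no $d_{t}$ with $t \leq i-1$. So $p_{r}$ ramifies in $\Q(\sqrt{-t_{i}})$ but not in $K^{(i-1)}(\sqrt{2})$, giving $\sqrt{-t_{i}} \notin K^{(i-1)}_{\infty}$.
\item Apply Corollary \ref{QuadraticExtofQInfiniteDecom} 2) to the list $d_{1}, \cdots, d_{i-1}, -\text{\rm sqf}(t_{i})$; no entry is divisible by $p$. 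Every prime of $\Q_{\infty}$ above $p$ then splits completely in this degree-$2^{i}$ extension.
\end{enumerate}

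This yields $2^{i} \cdot 2^{\nu_{2}(p^{2}-1)-3} = 2^{\nu_{2}(p^{2}-1)+i-3}$ places, and summing over such $p$ gives $s_{i}$.

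The main point requiring care is step $1)$, the degree and disjointness claim, since the splitting count scales with the degree. For this we use the hypothesis on $p_{r}$ and the standing assumption $\sqrt{2} \notin K$. A smaller point is that Corollary \ref{QuadraticExtofQInfiniteDecom} is applied to a negative square-free integer $-\text{\rm sqf}(t_{i})$. This is allowed, since that corollary only requires square-free integers.
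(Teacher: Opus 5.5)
Your proposal is correct and follows the paper's own argument: over $\Q_{\infty}$ the primes above $p$ split completely in the relevant multi-quadratic layer (the paper cites the Gras exercise, which is what Corollary \ref{QuadraticExtofQInfiniteDecom} packages), and Corollary \ref{DecompositionsOfOddPrimeInQn} gives $2^{\nu_{2}(p^{2}-1)-3}$ primes of $\Q_{\infty}$ above $p$; you additionally write out the degree bookkeeping and the $s_{i}$ case, which the paper only calls ``similar''. Two small remarks: $\sqrt{2}\notin K$ is a standing hypothesis inherited from Theorem \ref{MainTheorem}, not a harmless reduction (with $d_{1}=2$ the stated formula for $f_{2}$ is off by a factor $2$), and in step $1)$ you do not need $p_{r}$, because $K^{(i-1)}_{\infty}$ is totally real and so cannot contain $\sqrt{-t_{i}}$.
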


\begin{proof} For any integer $ 1 \leq i\leq  r, $ by definition, we have   \\
$  f_{i} = \sharp \{v : v \ \text{is a prime of} \ K_{\infty}^{(i-1)} \ \text{over a prime number} \ p \
\text{with} \ p \mid d_{i} \ \text{and} \ p \nmid \prod_{t=1}^{i-1} d_{t} \}. $ By \cite[Chapter II, Exercise 1.2.5]{Gra},
such places $ v $ split completely in
$ K_{\infty}^{(i-1)} / \Q. $ So by Lemma \ref{DecompositionsOfOddPrimeInQn} above, $$ f_{i} =
\sum_{\text{primes} \ p | d_{i}, p \nmid 2 \prod_{t=1}^{i-1} d_{t}} 2^{\nu_{2}(p^{2} - 1) + i - 4}. $$
The value $ s_{i} $ can be similarly obtained.
The proof is completed.
\end{proof}

\begin{corollary}\label{MainCorollary}
Let $ K = \Q(\sqrt{d_{1}}, \cdots, \sqrt{d_{r}}, \sqrt{-d}) $ with
$ [K : \Q] = 2^{r+1} \ (r \geq 0), $  where $ d_{1}, \cdots, d_{r}, d $ are square-free positive integers. Then
$$  \lambda_{2}(K) = \lambda_{2}(K^{+}) + \sum_{\substack{\text{primes} \ p | \prod_{i=1}^{r} d_i,  \\ p \neq 2}}
2^{\nu_{2}(p^{2} - 1) + r - \theta - 4} + \sum_{\substack{\text{primes} \ p | d, \\ p\nmid 2 \prod_{i=1}^{r} d_{i}}}
2^{\nu_{2}(p^{2} - 1) + r - \theta - 3} - 2^{r - \theta} + \delta,  $$ where
$ \delta = \left \{\begin{array}{l} 1, \quad \quad \text{if} \ \sqrt{d} \in K^{+}(\sqrt{2}),  \\
0, \quad \quad  \text{if otherwise}
\end{array} \right. $ and $ \theta = \left \{\begin{array}{l} 1, \quad \quad \text{if} \ \sqrt{2} \in K^{+},  \\
0, \quad \quad  \text{if otherwise}.
\end{array} \right. $
\end{corollary}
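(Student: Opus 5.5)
We specialize Theorem \ref{MainTheorem} to $F=\Q$ and then evaluate every term explicitly. Since $K$ and $K(\sqrt{2})$ have the same cyclotomic $\Z_2$-extension, we may first drop $\sqrt{2}$ from the list of generators. If $\sqrt{2}\in K^{+}$, we rewrite $K^{+}$ so that $\sqrt{2}$ is one of the $\sqrt{d_i}$ and then remove it. This lowers the number of odd quadratic generators to $r-\theta$, which explains why $r$ is replaced by $r-\theta$ everywhere. Once all $d_i$ and $d$ are odd, we reorder and re-choose generators (as in the proof of Proposition \ref{CohomologyEqualPlusOne}) so that there is an odd prime $p_r\mid d_r$ with $p_r\nmid d\prod_{i<r}d_i$. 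Such a choice is possible unless all odd generators are trivial. The degenerate cases $r-\theta=0$ are handled directly: for $K=\Q(\sqrt{-d})$ or $\Q(\sqrt2,\sqrt{-d})$ we use Theorem \ref{FerreroImaQuaLambda}, and the formula should reduce to $\sum 2^{\nu_2(p^2-1)-3}-1+\delta$, with $\delta=1$ exactly when $d=1$, which matches the exceptional value $0$.

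For $F=\Q$ we have $\lambda_2(F)=0$. Write $r'=r-\theta$. The main formula then reads
\[
\lambda_2(K)=2^{r'}\lambda_2(\Q(\sqrt{-t_1}))+\lambda_2(K^{+})+\sum_{i=1}^{r'}2^{r'-i}(s_i-f_i)+\delta .
\]
By Theorem \ref{FerreroImaQuaLambda},
\[
2^{r'}\lambda_2(\Q(\sqrt{-t_1}))=\sum_{p\mid t_1}2^{\nu_2(p^2-1)+r'-3}-2^{r'},
\]
and this produces the term $-2^{r-\theta}$. Here $t_1\ne1$ because $p_r\mid t_1$. Corollary \ref{ComputeSiFi} gives
\[
2^{r'-i}s_i=\sum 2^{\nu_2(p^2-1)+r'-3}
\]
over primes $p\mid(d_i,\mathrm{sqf}(t_{i+1}))$ with $p\nmid\prod_{t<i}d_t$, and
\[
2^{r'-i}f_i=\sum 2^{\nu_2(p^2-1)+r'-4}
\]
over primes $p\mid d_i$ with $p\nmid\prod_{t<i}d_t$.

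The core step is a prime-by-prime bookkeeping. Fix an odd prime $p$, and let $i_0$ be the least index with $p\mid d_{i_0}$, if one exists. Since $f_i$ only counts $p$ at its first appearance, the $f$-terms contribute $-2^{\nu_2(p^2-1)+r'-4}$ for each odd $p\mid\prod d_i$. Next, $p\mid t_1=\mathrm{sqf}$-wise exactly when $p$ divides an odd number of the $d_1,\dots,d_r,d$. The $s$-term for $p$ is present (only at $i=i_0$) exactly when $p$ divides $\mathrm{sqf}(t_{i_0+1})$, that is, when $p$ divides an odd number of $d_{i_0+1},\dots,d_r,d$. These two parities are complementary, because $p\mid d_{i_0}$ and $p\nmid d_t$ for $t<i_0$. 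So for each odd $p\mid\prod d_i$, exactly one of the two terms (from $t_1$ or from $s_{i_0}$) contributes $2^{\nu_2(p^2-1)+r'-3}$. Combined with the $f$-term, the net contribution is $2^{\nu_2(p^2-1)+r'-4}$.

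For $p\mid d$ with $p\nmid\prod d_i$, only the $t_1$ term contributes, giving $2^{\nu_2(p^2-1)+r'-3}$. Summing over all primes yields the stated formula. The main obstacle I expect is the reduction step: justifying that re-choosing generators does not change $K^{+}$, $\delta$, or the prime sets appearing in the final formula, and cleanly absorbing $\sqrt2\in K^{+}$ into $\theta$. I would handle this by noting that the final expression depends only on $K^{+}$, on the set of odd primes ramified in $K^{+}$, and on which odd primes ramify in $K/K^{+}$. All of these are intrinsic to $K$, so any admissible choice of generators gives the same value.
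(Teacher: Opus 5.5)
Your proposal is correct and follows essentially the paper's proof: specialize Theorem~\ref{MainTheorem} to $F=\Q$, insert Theorem~\ref{FerreroImaQuaLambda} for $\lambda_2(\Q(\sqrt{-t_1}))$ and Corollary~\ref{ComputeSiFi} for $s_i,f_i$, then sum prime by prime. Like the paper, you handle $\sqrt2\in K^+$ by discarding the generator $\sqrt2$, and the case $r-\theta=0$ directly by Ferrero's formula. Your complementary-parity argument at the first index $i_0$ is precisely the content of the paper's three summation identities, and it states the middle one correctly: there the paper's condition $p\nmid\mathrm{sqf}(\prod_{i} d_i)$ should read $p\nmid\mathrm{sqf}(d\prod_{i} d_i)$. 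Your closing invariance remark also supplies the justification behind the paper's ``we may as well assume'' when it re-chooses generators.
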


\begin{proof} By Theorem \ref{FerreroImaQuaLambda} above, we only need to consider the case $ r \geq 1. $
If $ \sqrt{2} \notin K^{+}, $ we may as well assume there exists a prime $ p_{r} > 2 $ such that
$ p_{r} \mid d_{r} $ but $ p_{r} \nmid d \prod_{i=1}^{r-1} d_{i}. $
Then by Theorems \ref{FerreroImaQuaLambda}, \ref{MainTheorem} and
Corollary \ref{ComputeSiFi} above, we know that
$$  \begin{aligned}
    &\lambda_{2}(K) = \sum_{\substack{\text{primes} \ p \mid \text{\rm sqf}(d \prod_{i=1}^{r} d_{i}), \\ p \neq 2}}
2^{\nu_{2}(p^{2} - 1) - 3 + r} - 2^{r}  \\
&+ \sum_{i=1}^{r} \left(\sum_{\substack{\text{primes} \ p | (d_{i}, \text{\rm sqf}(t_{i+1})), \\
p \nmid 2 \prod_{t=1}^{i-1} d_{t}}} 2^{\nu_{2}(p^{2} - 1) +r - 3} -
\sum_{\substack{\text{primes} \ p | d_{i}, \\ p \nmid 2 \prod_{t=1}^{i-1} d_{t}}}
2^{\nu_{2}(p^{2} - 1) + r - 4} \right) + \delta.
    \end{aligned}    $$
While $$  \begin{aligned}  &\sum_{i=1}^{r} \sum_{\substack{\text{primes} \ p | d_{i}, \\
p \nmid 2 \prod_{t=1}^{i-1} d_{t}}} 2^{\nu_{2}(p^{2} - 1) + r - 4} =
\sum_{\substack{\text{primes} \ p \mid \prod_{i=1}^{r} d_{i}, \\ p \neq 2}}
2^{\nu_{2}(p^{2} - 1) + r - 4},     \\
&\sum_{i=1}^{r} \sum_{\substack{\text{primes} \ p | (d_{i}, \text{\rm sqf}(t_{i+1})), \\
p \nmid 2 \prod_{t=1}^{i-1} d_{t}}} 2^{\nu_{2}(p^{2} - 1) +r - 3} =
\sum_{\substack{\text{primes} \ p | \prod_{i=1}^{r} d_{i}, \\
p \nmid \text{\rm sqf}(\prod_{i=1}^{r} d_{i}), \\ p \neq 2}} 2^{\nu_{2}(p^{2} - 1) + r - 3},   \\
&\sum_{\substack{\text{primes} \ p | \text{\rm sqf}(d \prod_{i=1}^{r} d_{i}), \\ p \neq 2}}
2^{\nu_{2}(p^{2} - 1) + r - 3} = \sum_{\substack{\text{primes} \ p | \prod_{i=1}^{r} d_{i},
p | \text{\rm sqf}(d \prod_{i=1}^{r} d_{i}), \\ p \neq 2}} 2^{\nu_{2}(p^{2} - 1) + r - 3} +  \\
&\sum_{\substack{\text{primes} \ p | d, \\ p \nmid 2 \prod_{i=1}^{r} d_{i}}}
2^{\nu_{2}(p^{2} - 1) + r - 3}.
\end{aligned}  $$
If $ \sqrt{2} \in K, $  for $ r = 1, $ we have $ K = \Q(\sqrt{2}, \sqrt{-d}), $ then $ \lambda_{2}(K) =
\lambda_{2}(\Q(\sqrt{-d})), $ and the conclusion follows from Theorem \ref{FerreroImaQuaLambda} above. For $ r \geq 2, $
we may as well assume $ d_{1} = 2, $ then $ \lambda_{2}(K) =
\lambda_{2}(\Q(\sqrt{d_{2}}, \cdots, \sqrt{d_{r}}, \sqrt{-d})). $ Now $ \sqrt{2} \notin
\Q(\sqrt{d_{2}}, \cdots, \sqrt{d_{r}}, \sqrt{-d}) $ and $ r - 1 \geq 1, $ so by the above discussion,
our conclusion holds. The proof is completed.
\end{proof}

\begin{example} Let $ K = \Q(\sqrt{d_{1}}, \sqrt{-d}) $ be a bi-quadratic  number
field, where $ d_{1} $ and $ d $ are positive square-free odd integers. Assume Greenberg's conjecture
holds for $ \Q(\sqrt{d_{1}}), $ then by Corollary \ref{MainCorollary} above, we have
$$  \lambda_{2}(K) = \sum_{\text{primes} \ p | d_{1}}
2^{\nu_{2} (p^{2} - 1) - 3} + \sum_{\text{primes} \ p | d, \ p \nmid d_{1}}
2^{\nu_{2}(p^{2} - 1) - 2} - 2 + \delta,  $$  where
$ \delta = \left \{\begin{array}{l} 1, \quad \quad \text{if} \ d = d_{1} \ \text{or} \ 1,  \\
0, \quad \quad  \text{if otherwise}.
\end{array} \right. $   \\
In particular, given a positive integer $ N, $ let $ K= \Q(\sqrt{\ell \ell^{\prime}}, \sqrt{-1}), $
where $ \ell $ and $ \ell^{\prime} $ are different prime numbers, $ \nu_{2}( \ell - 1) = N + 2, \
\ell^{\prime} \equiv 5 \ (\text{\rm mod} \ 8), $ and $ \left(\frac{\ell}{\ell^{\prime}} \right) = -1. $
By \cite[Theorem 1.1]{Mo3}, there exist infinitely many such fields $ K. $  \\
Since $ \nu_{2}(\ell^{2} - 1) = \nu_{2}(\ell - 1) + \nu_{2}((\ell - 1) + 2) = (N+2)+1 = N + 3 $
and $ \nu_{2}((\ell^{\prime})^2 - 1) = \nu_{2}(\ell^{\prime} - 1) + \nu_{2}(\ell^{\prime} + 1) = 2 + 1
=3. $ So $ \lambda_{2}(K) = 2^{N}, $ which is uniformly compatible with \cite[Theorem 1.1]{Mo3}, and
shows that there are infinitely many CM number fields, over which the unramified Iwasawa module
having any given $ \Z_{2}$-rank.
\end{example}

\begin{example} Let $ K = \Q(\sqrt{d_{1}}, \sqrt{d_{2}}, \sqrt{-d}) $ with $ [K : \Q] = 2^{3}, $
where $ d_{1}, d_{2} $ and $ d $ are positive square-free odd integers. Assume Greenberg's conjecture
holds for $ \Q(\sqrt{d_{1}}, \sqrt{d_{2}}), $ then by Corollary \ref{MainCorollary} above, we have
$$  \lambda_{2}(K) = \sum_{\text{primes} \ p | d_{1}d_{2}}
2^{\nu_{2} (p^{2} - 1) - 2} + \sum_{\text{primes} \ p | d, \ p \nmid d_{1}d_{2}}
2^{\nu_{2}(p^{2} - 1) - 1} - 4 + \delta,  $$  where
$ \delta = \left \{\begin{array}{l} 1, \quad \quad \text{if} \ d \in \ \{1, d_{1}, d_{2},
\text{\rm sqf}(d_{1} d_{2}) \},  \\
0, \quad \quad  \text{if otherwise}.
\end{array} \right. $   \\
 In particular, for $ K = \Q(\sqrt{q_{1}}, \sqrt{q_{2}}, \sqrt{-1}), $ where $ q_{1} $ and $ q_{2} $
 are two different odd prime numbers satisfying $ q_{1} \equiv 7 \ (\text{\rm mod} \ 16) $ and
 $ q_{2} \equiv 3 \ (\text{\rm mod} \ 8). $ Then it is not difficult to show that $ \lambda_{2}(K) = 3, $
which is uniformly compatible with \cite[Theorem 3]{CAZ}; Also, if the above $ q_{1} $ and $ q_{2} $
satisfying $ q_{1} \equiv q_{2} \equiv 3 \ (\text{\rm mod} \ 8), $ then
$ \lambda_{2}(K) = 1, $ which is uniformly compatible with \cite[Lemma 3.5]{ACZ1}.
\end{example}

\section{ Iwasawa invariants and class number parity of multi-quadratic number fields }  

\begin{lemma}\label{IwasawaInvariantVanishFirstLevelOddCl}
Let $ K $ be a number field, $ p $ be a prime number. For an
$ \Z_{p}$-extension $ K_{\infty} / K, $ if all the prime ideals over $ p $
are totally ramified in $ K_{\infty} / K, $ then
$$ \lambda_{p}(K_{\infty}) = \mu_{p}(K_{\infty}) = \nu_{p}(K_{\infty}) = 0 \Leftrightarrow
A_{K_{1}} = 0,  $$
where $ A_{K_{1}} $ is the Sylow $ p$-subgroup of the ideal class group of $ K_{1}. $
\end{lemma}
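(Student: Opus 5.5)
Both directions will be proved by working with the Iwasawa module $X = \varprojlim A_{K_n}$, where $A_{K_n}$ is the $p$-part of the class group of $K_n$ and the limit is taken along norm maps, viewed as a module over $\Lambda = \Z_p[[T]]$ with $T = \gamma - 1$ for a topological generator $\gamma$ of $\Gamma = \text{\rm Gal}(K_\infty/K)$. The key input is Iwasawa's description of $A_{K_n}$ as a quotient of $X$ when the primes above $p$ are totally ramified. Since every prime over $p$ is totally ramified in $K_\infty/K$, some index $e \geq 0$ can be chosen (we may take $e = 0$ after shifting, but it is simplest to keep $K_0 = K$ with $e = 0$) with $K_\infty \cap H = K$, where $H$ is the maximal unramified abelian $p$-extension of $K$. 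Let $s$ be the number of primes above $p$ in $K$. Then for every $n \geq 0$
$$ A_{K_n} \cong X / \nu_n Y_0, \qquad \nu_n = 1 + \gamma + \cdots + \gamma^{p^n - 1}, $$
where $Y_0$ is the $\Lambda$-submodule generated by $TX$ and the elements $z_2, \dots, z_s$ coming from the inertia groups (see \cite{Wa}, Lemma 13.15). In particular $A_{K} \cong X/Y_0$.

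For the implication ``$\Rightarrow$'': if $\lambda = \mu = \nu = 0$, then $e_n = 0$ for all $n \geq n_0$. Total ramification makes the norm maps $A_{K_m} \to A_{K_n}$ surjective for $m \geq n$, because $K_m$ and the Hilbert $p$-class field of $K_n$ are linearly disjoint over $K_n$. Hence $|A_{K_n}| \leq |A_{K_{m}}| = 1$ for $m$ large, so every $A_{K_n}$ vanishes, and in particular $A_{K_1} = 0$.

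For ``$\Leftarrow$'': assume $A_{K_1} = 0$. By the same surjectivity of norms, $A_{K_0} = 0$ as well. The aim is to show $X = 0$, so that $A_{K_n} = 0$ for all $n$ and all three invariants vanish. The intended route is to apply the fundamental isomorphism over the base field $K_1$ itself, since $K_\infty/K_1$ is again a $\Z_p$-extension in which the primes above $p$ are totally ramified. This gives $A_{K_1} \cong X / Y_0^{(1)}$, where $Y_0^{(1)}$ is generated by $T_1 X$ with $T_1 = \gamma^p - 1$ and by the inertia differences $z_i^{(1)}$. The number of primes above $p$ in $K_1$ equals that in $K$, namely $s$, and $T_1 \in (p, T) = \mathfrak{m}$. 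If the $z_i^{(1)}$ also lie in $\mathfrak{m}X$, then $X = Y_0^{(1)} \subseteq \mathfrak{m}X$, and Nakayama's lemma for the compact $\Lambda$-module $X$ yields $X = 0$.

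The hard part is controlling the inertia terms. In the simplest case $s = 1$ there are no $z_i$, so $A_{K} = X/TX$, and $A_K = 0$ together with Nakayama already gives $X = 0$; in that case the hypothesis on $K_1$ is not even needed. For $s > 1$, I would first try $z_i^{(1)} = \nu_1 z_i$ with $\nu_1 = 1 + \gamma + \cdots + \gamma^{p-1} \equiv p \equiv 0 \pmod{\mathfrak{m}}$, which would put $z_i^{(1)}$ in $\mathfrak{m}X$. This identification is exactly the formula $\nu_n Y_0$ at level $n = 1$. Then $X = \nu_1 Y_0 \subseteq \mathfrak{m} X$, Nakayama gives $X = 0$, and $A_{K_n} = X/\nu_n Y_0 = 0$ for all $n$, hence $\lambda = \mu = \nu = 0$.

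The main point to verify carefully is that the formula $A_{K_n} \cong X/\nu_n Y_0$ holds at $n = 1$ under only the total ramification hypothesis. This means checking that the choice $e = 0$ is legitimate, which follows from total ramification at every prime above $p$.
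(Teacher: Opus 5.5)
Your proof is correct and follows essentially the paper's route. For ``$\Rightarrow$'' you both use the monotonicity of $\sharp A_{K_n}$, which comes from surjective norms under total ramification; for ``$\Leftarrow$'' the paper cites Fukuda's Theorem 1, and you instead write out its proof: Iwasawa's isomorphism $A_{K_n}\cong X/\nu_nY_0$ (valid with $e=0$, since every ramified prime lies over $p$ and is totally ramified) at $n=1$, giving $X=\nu_1Y_0\subseteq\mathfrak{m}X$, then Nakayama gives $X=0$, and the inertia terms you flag as needing care are already handled by the structure lemma you cite.
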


\begin{proof} By assumption, we have $ \sharp A_{K_{n}} \leq \sharp A_{K_{n+1}} $ for
all integers $ n \geq 1. $ So $ \lambda_{p}(K_{\infty}) = \mu_{p}(K_{\infty}) =
\nu_{p}(K_{\infty}) = 0 \Leftrightarrow A_{K_{n}} = 0 $ for all integers $ n \geq 1. $
Since $ \sharp A_{K} \leq \sharp A_{K_{1}}, $ our conclusion follows directly from \cite[Theorem 1]{Fu}.
The proof is completed.
\end{proof}

Let $ F / \Q $ be an abelian $ 2$-extension, $ F_{\infty}
= \bigcup _{n=1}^{\infty} F_{n} $ be the cyclotomic $ \Z_{2}$-extension with
$ [F_{n} : F ] = 2^{n}, $ and let $ A_{F_{n}} $ denote the Sylow $ 2$-subgroup of the ideal class group
of $ F_{n}. $ Assume $ 8 \nmid \mathfrak{f}_{F/\Q}, $ the conductor of $ F, $ then $ \sqrt{2} \notin F, $
we have $ F_{n} = F \Q_{n} $ because $ F \bigcap \Q_{\infty} = \Q. $ Then all the primes over $ 2 $
are totally ramified in $ F_{\infty} / F. $ Thus $ \lambda_{2}(F) = \mu_{2}(F) = \nu_{2}(F)
= 0 \Leftrightarrow A_{F_{1}} = 0, $ i.e., equivalently, the class number of $ F(\sqrt{2}) $
is odd. In particularly, if $ F = \Q(\sqrt{d_{1}}, \cdots, \sqrt{d_{r}}) $ with $ [F : \Q] =
2^{r}, $ where $ d_{1}, \cdots, d_{r} $ are square-free positive integers. Then $ 8 \nmid
\mathfrak{f}_{F/\Q} \Leftrightarrow 2 \nmid \prod_{i=1}^{r} d_{i}. $  \\
Now for any abelian $ 2$-extension $ K / \Q, $ denote by
$ K^{\text{Elm}} $ the maximal sub-extension of $ K / \Q $ such that the Galois group
$ \text{\rm Gal}(K^{\text{Elm}} / \Q) $ is an elementary $ 2$-group, in other words,
$ K^{\text{Elm}} $ is the subfield of $ K $ fixed by the Frattini subgroup of
$ \text{\rm Gal}(K / \Q). $ Let $ K^{\text{Gen}} $ be the genus field of $ K / \Q, $ i.e.,
the maximal unramified abelian extension of $ K $ such that $ K^{\text{Gen}} / \Q $
is also abelian. Also, let $ K^{\text{NGen}} $ be the narrow genus field of $ K / \Q, $
i.e., the maximal abelian extension of $ K $ over which all the finite places are
unramified and such that $ K^{\text{NGen}} / \Q $ is also abelian. In particularly,
if $ K $ is totally real, then $ K^{\text{Gen}} = K^{\text{NGen}} \bigcap \R. $

\begin{theorem}[\cite{Ya}, Theorem 2.5]\label{YamamotoIwaVanishTwo}
Let $ F/\Q $ be a real abelian $ 2$-extension
with $ 8 \nmid \mathfrak{f}_{F/\Q}. $ Then all the Iwasawa invariants in the cyclotomic
$ \Z_{2}$-extension vanish if and only if $ F = F^{\text{\rm Gen}} $ and $ F^{\text{\rm Elm}} $ is
one of the following fields:
\begin{enumerate}[$a)$]				
\item $ F^{\text{\rm Elm}} = \Q(\sqrt{p}), $ where $ p $ is a prime number satisfying
$$   \begin{aligned}
 p \equiv 3 \ (\text{\rm mod} \ 4), \quad & \text{or}  \\
  p \equiv 5 \ (\text{\rm mod} \ 8), \quad & \text{or} \\
  p \equiv 1 \ (\text{\rm mod} \ 8), \quad & \text{and} \ \left(\frac{2}{p}\right)_{4} \left(\frac{p}{2}\right)_{4} = -1.
        \end{aligned}    $$
\item $  F^{\text{\rm Elm}} = \Q(\sqrt{pq}), $ where $ p $ and $ q $ are two different prime numbers satisfying
$$   p \equiv 3 \ (\text{\rm mod} \ 4) \quad \text{and} \quad q \equiv 3 \ (\text{\rm mod} \ 8).   $$
\item $ F^{\text{\rm Elm}} =\Q(\sqrt{p},\sqrt{q}), $ where $ p $ and $ q $ are two different prime numbers satisfying
$$       \begin{aligned}
            &p \equiv q \equiv 3 \ (\text{\rm mod} \ 8), & \text{or} \\
            &p \equiv 3 \ (\text{\rm mod} \ 8), \  q \equiv 5 \ (\text{\rm mod} \ 8), & \text{or} \\
            &p \equiv 3 \ (\text{\rm mod} \ 8), \  q \equiv 7 \ (\text{\rm mod} \ 8), & \text{or} \\
            &p \equiv 5 \ (\text{\rm mod} \ 8), \  q \equiv 7 \ (\text{\rm mod} \ 8), & \text{or} \\
            &p \equiv 5 \ (\text{\rm mod} \ 8) \ q \equiv 1 \ (\text{\rm mod} \ 8), \left(\frac{q}{p}\right)=-1,
            \left(\frac{2}{q}\right)_4\left(\frac{q}{2}\right)_4=-1, & \text{or} \\
            &p\equiv q \equiv 5 \ (\text{\rm mod} \ 8), \left(\frac{q}{p}\right)=1, \left(\frac{q}{p}\right)_{4}
            \left(\frac{p}{q}\right)_{4}=-1, & \text{or} \\
            &p\equiv q \equiv 5 \ (\text{\rm mod} \ 8), \left(\frac{q}{p}\right)=-1, \left(\frac{2q}{p}\right)_{4}
            \left(\frac{2p}{q}\right)_{4} \left(\frac{pq}{2}\right)_{4} = 1. &
        \end{aligned}  $$
\item $  F^{\text{\rm Elm}} = \Q(\sqrt{p}, \sqrt{q}, \sqrt{\ell}), $ where $ p, q $ and $ \ell $ are different prime numbers satisfying
$$  \begin{aligned}
            &p \equiv q \equiv 3, \ \ell \equiv 5 \ (\text{\rm mod} \ 8), \left(\frac{pq}{\ell}\right)=-1, & \text{or} \\
            &p \equiv q \equiv 3, \ \ell \equiv 7 \ (\text{\rm mod} \ 8), \left(\frac{pq}{\ell}\right)=-1, & \text{or} \\
            &p \equiv 3, \ q \equiv 5, \ \ell \equiv 7 \ (\text{\rm mod} \ 8), \left(\frac{q}{\ell}\right) = -1.&
        \end{aligned} $$
\item $  F^{\text{\rm Elm}} = \Q(\sqrt{pq}, \sqrt{\ell}), $  where $ p, q $ and $ \ell $ are different prime numbers satisfying
$$   \begin{aligned}
        &p \equiv q \equiv 3, \ \ell \equiv 5 \ (\text{\rm mod} \ 8), \left(\frac{pq}{\ell}\right) = -1, & \text{or} \\
        &p \equiv 3, q \equiv 7, \ell \equiv 5 \ (\text{\rm mod} \ 8), \left(\frac{\ell}{q}\right) = -1.
        \end{aligned}   $$
\item $ F^{\text{\rm Elm}} = \Q(\sqrt{pq}, \sqrt{p \ell}), $ where $ p, q $ and $ \ell $ are different prime numbers satisfying
$    p \equiv q \equiv 3, \ \ell \equiv 7 \ (\text{\rm mod} \ 8), \left(\frac{pq}{\ell}\right)=-1.  $
\end{enumerate}
\end{theorem}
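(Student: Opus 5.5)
The statement immediately above is Yamamoto's classification (\cite{Ya}, Theorem 2.5). The paper quotes it rather than proving it, so what follows is only a plan for how one could reconstruct it with the tools already assembled here.

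\textbf{Reduction to a class number condition.} Since $8 \nmid \mathfrak{f}_{F/\Q}$, we have $F \cap \Q_{\infty} = \Q$. Hence every prime above $2$ is totally ramified in $F_{\infty}/F$. Lemma \ref{IwasawaInvariantVanishFirstLevelOddCl} then says that vanishing of all three invariants is equivalent to $A_{F_{1}} = 0$, that is, to $h(F(\sqrt{2}))$ being odd. So the statement becomes a parity criterion for the class number of the real abelian $2$-extension $F(\sqrt{2})$.

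\textbf{Step 1: $F = F^{\text{Gen}}$ is necessary, and the number of ramified primes is bounded.} If $F \neq F^{\text{Gen}}$, then $F$ has a nontrivial unramified abelian extension $F^{\text{Gen}}$ of $2$-power degree. I would check that $F^{\text{Gen}}(\sqrt{2})/F(\sqrt{2})$ is still nontrivial and unramified, because the primes above $2$ in $F(\sqrt{2})$ stay unramified after the compositum. This forces $2 \mid h(F(\sqrt{2}))$.

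Next I would apply the genus formula to $F(\sqrt{2})/\Q$. Its $2$-rank of the narrow class group is governed by the number of ramified primes, which are the primes dividing $\mathfrak{f}_F$ together with $2$, and by the rank of $\text{Gal}(F^{\text{Elm}}(\sqrt{2})/\Q)$. Fröhlich's bound (\cite[Theorem 5.6]{Fro}) and the passage from narrow to wide class group, via signs of units, then give:
\begin{itemize}
\item at most three odd ramified primes;
\item $F^{\text{Elm}}$ of rank at most $3$;
\item only the shapes of generators listed in a)--f).
\end{itemize}

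\textbf{Step 2: congruence conditions mod $8$.} For each candidate shape I would determine when $F(\sqrt{2})$ has odd class number. For the quadratic and small biquadratic cases I would use Rédei matrices / ambiguous class counting in $F(\sqrt{2})/F^{\text{Elm}}$ (Chevalley's formula). The count is
$$\sharp A^{G} = \frac{2^{t-1}}{[E : E \cap N]},$$
where $t$ is the number of ramified primes and $E \cap N$ is the group of units that are norms. This reduces the problem to local norm-residue symbols of the fundamental units, which yields the Legendre symbol conditions and the mod $8$ conditions. The case $p \equiv 1 \pmod 8$ requires the quartic symbols $\left(\frac{2}{p}\right)_4\left(\frac{p}{2}\right)_4$, which I would get from Scholz-type reciprocity.

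\textbf{Step 3: non-elementary $F$.} When $F \neq F^{\text{Elm}}$, I would reduce to $F^{\text{Elm}}$. Since $F = F^{\text{Gen}}$, the extension $F/F^{\text{Elm}}$ is cyclic of $2$-power degree, and in the totally ramified case a standard lemma says that $h$ is odd upstairs if and only if it is odd downstairs.

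\textbf{Main obstacle.} The hardest part will be the rank-$2$ and rank-$3$ cases in Step 2. There one must compute the unit index $[E : E \cap N]$ for a biquadratic or triquadratic field, and this requires knowing which products of fundamental units are norms locally at $2$ and at the odd ramified primes. I would first attempt this through Kuroda's class number formula for the $V_4$-extension $F(\sqrt{2})/\Q(\sqrt{2})$, which expresses the $2$-part of the class number through those of the three quadratic subextensions and a unit index bounded by Proposition \ref{HasseUnitIndex}-type arguments. The remaining sign cases would be settled by explicit Hilbert symbol computation.
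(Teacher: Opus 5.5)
The paper does not prove this statement; it quotes Yamamoto's Theorem 2.5. So there is no internal argument to compare against, and your outline has to stand on its own.

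\textbf{What holds up.} Your reduction is correct: $8\nmid\mathfrak f_{F/\Q}$ gives total ramification of the primes above $2$ in $F_\infty/F$, so Lemma \ref{IwasawaInvariantVanishFirstLevelOddCl} turns vanishing into $2\nmid h(F(\sqrt2))$. Your necessity argument for $F=F^{\mathrm{Gen}}$ is also correct: $F^{\mathrm{Gen}}(\sqrt2)/F(\sqrt2)$ is unramified, and it is nontrivial because $F(\sqrt2)/F$ is ramified at $2$. The shapes a)--f) follow from two facts. First, $F=F^{\mathrm{Gen}}$ forces $F^{\mathrm{Elm}}=(F^{\mathrm{Elm}})^{\mathrm{Gen}}$, since the latter field is multiquadratic and lies in $F^{\mathrm{Gen}}$. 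Second, there are at most three odd ramified primes. Signs of units play no role in the shapes. However, every congruence and quartic-symbol condition of the theorem sits in your Step 2, which you only announce. It needs the full unit group of a bi- or triquadratic field and its norm residue symbols at the primes above $2$. Proposition \ref{HasseUnitIndex} concerns the Hasse index of CM fields and gives no control of Kuroda's unit index for these totally real fields.

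\textbf{Where it fails: Step 3.} Your claim that $F=F^{\mathrm{Gen}}$ makes $F/F^{\mathrm{Elm}}$ cyclic is false, even when $F^{\mathrm{Elm}}$ is in the list.
\begin{itemize}
\item Take $p=5$, $q=17$. Then $\left(\frac{17}{5}\right)=-1$; $2^4\equiv-1 \pmod{17}$ gives $\left(\frac{2}{17}\right)_4=-1$; and $\left(\frac{17}{2}\right)_4=1$. So $\Q(\sqrt5,\sqrt{17})$ falls under case c).
\item Let $F$ be the compositum of $\Q(\zeta_{20})^{+}$, with character $\chi_{-4}\chi_5$ where $\chi_5$ is an odd quartic character mod $5$, and the real quartic subfield of $\Q(\zeta_{17})$, with even quartic character $\chi_{17}$.
\item Then $\mathfrak f_{F/\Q}=340$. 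The even part of $\langle\chi_{-4}\rangle\times\langle\chi_5\rangle\times\langle\chi_{17}\rangle$ is exactly the character group of $F$, so $F=F^{\mathrm{Gen}}$. Also $F^{\mathrm{Elm}}=\Q(\sqrt5,\sqrt{17})$, yet $\mathrm{Gal}(F/F^{\mathrm{Elm}})\cong(\Z/2\Z)^2$.
\end{itemize}

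Even when the extension is cyclic, your ``odd upstairs iff odd downstairs'' lemma needs exactly one ramified prime, and that typically fails here.
\begin{itemize}
\item For $F$ the real quartic subfield of $\Q(\zeta_{17})$ (case a)), both primes above $17$ in $\Q(\sqrt2,\sqrt{17})$ ramify in $F(\sqrt2)$.
\item In case c) with $p\equiv q\equiv5 \pmod 8$ and $\left(\frac{q}{p}\right)=1$, take the octic $F=F^{\mathrm{Gen}}$ cut out by even products of odd quartic characters mod $p$ and mod $q$. Four primes ramify in the quadratic extension $F(\sqrt2)/F^{\mathrm{Elm}}(\sqrt2)$.
\end{itemize}
Chevalley's formula then gives $2^{t-1}/[E:E\cap N]$ with $t>1$. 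Oddness upstairs is equivalent to a maximal unit norm index, and nothing in your plan computes it. So ``$F=F^{\mathrm{Gen}}$ and $F^{\mathrm{Elm}}$ in the list $\Rightarrow$ vanishing'' is unproved for every non-elementary $F$.

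The converse direction does go through, but by a different argument than yours. Any subextension of $F(\sqrt2)/F^{\mathrm{Elm}}(\sqrt2)$ that is unramified over $F^{\mathrm{Elm}}(\sqrt2)$ lies in the multiquadratic genus field of $F^{\mathrm{Elm}}(\sqrt2)$, so it is trivial. Hence the norm $A_{F(\sqrt2)}\to A_{F^{\mathrm{Elm}}(\sqrt2)}$ is onto.

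To close the gap, you must compute these unit norm indices along the tower. Alternatively, work with $X(F_\infty)$ and Nakayama's lemma relative to $\mathrm{Gal}(F_\infty/F^{\mathrm{Elm}}_\infty)$ or $\mathrm{Gal}(F_\infty/\Q_\infty)$, where Corollary \ref{DecompositionsOfOddPrimeInQn} controls the number of ramified places. Either way, some unit computation is unavoidable.
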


\begin{theorem}[\cite{Zh}, Corollary 1]\label{ZhangXiankeNGenusField}
Let $ F = \Q(\sqrt{d_{1}}, \cdots, \sqrt{d_{r}}) $ with $ [F : \Q] = 2^{r}, $
where $ d_{i} \in \Z \ (1 \leq i \leq r) $ are square-free integers. Then
$$   F^{\text{\rm NGen}} = F(\sqrt{p_{1}^{\ast}}, \cdots, \sqrt{p_{m}^{\ast}}), $$
where $ p_{1}, \cdots, p_{m} $ are all the odd prime numbers ramifying in $ F, $ and $ p_{i}^{\ast}
=(-1)^{(p-1)/2} p_{i} \ (1 \leq i \leq m). $
\end{theorem}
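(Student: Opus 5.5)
This is a cited result (\cite[Corollary 1]{Zh}), so the task is to reprove the description of the narrow genus field of a multiquadratic field. Write $M = F(\sqrt{p_{1}^{\ast}}, \cdots, \sqrt{p_{m}^{\ast}})$. The proof has two inclusions, $M \subseteq F^{\text{\rm NGen}}$ and $F^{\text{\rm NGen}} \subseteq M$.

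\emph{Step 1: $M \subseteq F^{\text{\rm NGen}}$.} Clearly $M/\Q$ is abelian, since it is multiquadratic. It remains to show that $M/F$ is unramified at every finite place.

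For an odd prime $q$, only $\Q(\sqrt{q^{\ast}})$ can ramify at $q$ among the generators of $M$. If $q = p_i$ ramifies in $F$, then $p_i$ already divides some $d_j$ that ramifies in $F$. By Abhyankar's lemma, the ramification index of $p_i$ in $M$ is $2$, the same as in $F$, so there is no new ramification over $F$.

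At $2$: each field $\Q(\sqrt{p_i^{\ast}})$ is unramified at $2$, because $p_i^{\ast} \equiv 1 \pmod 4$. Hence the ramification index of $2$ in $M$ equals its ramification index in $F$. Infinite places are allowed to ramify, so nothing needs checking there.

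\emph{Step 2: $F^{\text{\rm NGen}} \subseteq M$.} Let $N = F^{\text{\rm NGen}}$, an abelian extension of $\Q$. By Kronecker--Weber, $N \subseteq \Q(\zeta_{c})$ for some $c$. We work with Dirichlet characters and use that the ramification index at $\ell$ equals the order of the $\ell$-part of the character group.

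Let $X$ be the character group of $F$; it is elementary $2$-abelian. The defining condition of $N$ is that for each prime $\ell$, the inertia group at $\ell$ in $\mathrm{Gal}(N/\Q)$ maps injectively to the one in $\mathrm{Gal}(F/\Q)$. In terms of characters, this says the $\ell$-components $\chi_\ell$ of characters $\chi$ of $N$ range over the same group as the $\ell$-components of characters in $X$.

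For odd $\ell$, those components are trivial or the quadratic character mod $\ell$, and only $\ell \in \{p_1, \ldots, p_m\}$ occurs. For $\ell = 2$, they form the $2$-part group $X_2$ of $X$.

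So every character of $N$ is a product $\chi = \chi_2 \cdot \prod_i \chi_{p_i}^{\epsilon_i}$ with $\chi_2 \in X_2$. Each $\chi_{p_i}$ corresponds to $\Q(\sqrt{p_i^{\ast}})$, so it lies in the character group of $M$. It remains to see that $X_2$ is contained in the character group of $M$. Every $\psi \in X$ has the form $\psi_2 \prod_i \psi_{p_i}$ with each $\psi_{p_i}$ in the character group of $M$, so $\psi_2 = \psi \prod_i \psi_{p_i}^{-1}$ lies in the group generated by $X$ and the $\chi_{p_i}$, which is the character group of $M$. Hence $N \subseteq M$.

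\emph{Main obstacle.} The delicate point is the $2$-adic part. One must make sure that the components $\chi_2$ of characters of $N$ lie in $X_2$ and not in a larger $2$-power group. This is exactly where the equality of inertia groups is used: the $2$-part of the character group of $N$ restricted to inertia at $2$ must equal that of $F$, because $N/F$ is unramified at $2$.

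This is cleanest if phrased as follows: the inertia field at $\ell$ is determined by the $\ell$-components of the characters, so equality of inertia groups gives $\{\chi_\ell\}_N = \{\chi_\ell\}_F$. I would cite Leopoldt's character-theoretic description of genus fields for this if needed.
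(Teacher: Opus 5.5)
Your proof is correct. The paper gives no proof of this statement: it quotes it from Zhang's Corollary~1, so there is no argument in the paper to compare routes with.

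What you have written is a self-contained proof via Leopoldt's character-theoretic description of the narrow genus field. For an abelian field with character group $X$, $F^{\mathrm{NGen}}$ corresponds to $\prod_{\ell} X_{\ell}$, where $X_\ell$ is the group of $\ell$-components of characters in $X$. Your Step~2 proves the inclusion $Y \subseteq \prod_\ell X_\ell$ for the character group $Y$ of any admissible $N$, using $X_\ell \subseteq Y_\ell$ and $|X_\ell| = e_\ell(F) = e_\ell(N) = |Y_\ell|$. The identity $\prod_\ell X_\ell = \langle X, \chi_{p_1}, \dots, \chi_{p_m}\rangle$ is your observation that $\psi_2 = \psi \prod_i \psi_{p_i}^{-1}$. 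This handles the prime $2$ cleanly: you never need to know which of $\Q(\sqrt{-1})$, $\Q(\sqrt{2})$, $\Q(\sqrt{-2})$ contribute there.

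Two minor remarks.
\begin{enumerate}
\item Applying Kronecker--Weber to $N$ requires $N/\Q$ to be finite. This holds because $N/F$ is abelian and unramified at all finite places, so $N$ lies in the narrow Hilbert class field of $F$.
\item Step~1 does not need Abhyankar's lemma; it follows from the same character computation. The group $X_M = \langle X, \chi_{p_1}, \dots, \chi_{p_m}\rangle$ satisfies $(X_M)_\ell = X_\ell$ for every prime $\ell$, because each $\chi_{p_i}$ is $p_i$-primary and already lies in $X_{p_i}$. Hence $e_\ell(M) = e_\ell(F)$, so $M/F$ is unramified at all finite places. Alternatively, tame inertia groups are cyclic, so an odd prime has ramification index at most $2$ in any multiquadratic field.
\end{enumerate}
With these points the whole proof reduces to the single identity $\prod_\ell X_\ell = X_M$ once Leopoldt's description is granted.
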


\begin{corollary}\label{TotallyRealMultQuaIwasawaInaVanish}
\begin{enumerate}[$1)$]				
\item Let $ F $ be a real multi-quadratic number field with $ 8 \nmid \mathfrak{f}_{F/\Q}. $
Then all the Iwasawa invariants vanish in the cyclotomic $ \Z_{2}$-extension of $ F $ if and only if
$ F =  F^{\text{\rm Elm}} $ as in Theorem \ref{YamamotoIwaVanishTwo} above. \\
\item Let $ L $ be a real multi-quadratic number field with $ \sqrt{2} \in L. $ Then $ 2 \nmid h(L) $
if and only if $ L = F (\sqrt{2}) $  for some field $ F $ as in 1) above.
\end{enumerate}
\end{corollary}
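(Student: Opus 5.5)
Part 1) is a specialisation of Theorem \ref{YamamotoIwaVanishTwo} to multi-quadratic fields, and part 2) follows from 1) together with Lemma \ref{IwasawaInvariantVanishFirstLevelOddCl} and the discussion preceding Theorem \ref{YamamotoIwaVanishTwo}.

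\textbf{Part 1).} Let $F$ be real multi-quadratic with $8 \nmid \mathfrak{f}_{F/\Q}$. Then $\text{\rm Gal}(F/\Q)$ is an elementary $2$-group, so $F^{\text{\rm Elm}} = F$. Theorem \ref{YamamotoIwaVanishTwo} then says that all Iwasawa invariants vanish if and only if $F = F^{\text{\rm Gen}}$ and $F$ is one of the fields a)--f). It remains to check that the genus condition $F = F^{\text{\rm Gen}}$ holds automatically for each field in a)--f).

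For this I will use Theorem \ref{ZhangXiankeNGenusField} and $F^{\text{\rm Gen}} = F^{\text{\rm NGen}} \cap \R$. We have $F^{\text{\rm NGen}} = F(\sqrt{p^{\ast}} : p \text{ odd ramified})$, so $F^{\text{\rm Gen}}$ is the maximal real subfield of this compositum. In each listed case I compute it directly:
\begin{enumerate}[$\bullet$]
\item For $\Q(\sqrt{pq})$ with $p \equiv q \equiv 3 \pmod 4$, we get $\Q(\sqrt{-p},\sqrt{-q}) \cap \R = \Q(\sqrt{pq})$.
\item For $\Q(\sqrt{p},\sqrt{q})$ with $p \equiv 3$ and $q \equiv 5 \pmod 8$, we get $\Q(\sqrt{p},\sqrt{q},\sqrt{-p}) \cap \R = F$.
\item For $\Q(\sqrt{p})$ with $p \equiv 3 \pmod 4$, we get $\Q(\sqrt{p},\sqrt{-p}) \cap \R = \Q(\sqrt{p})$.
\end{enumerate}
The same computation works in general. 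Each listed field already contains $\sqrt{p}$ for $p \equiv 1 \pmod 4$, or $\sqrt{pq}$ for pairs $p \equiv q \equiv 3 \pmod 4$, so the real subfield of $F^{\text{\rm NGen}}$ adds nothing new. Note that $F^{\text{\rm NGen}}$ has degree at most $2$ over $F$ unless $F$ omits a product of ramified $\sqrt{p^{\ast}}$'s. Hence the equivalence in 1) reduces exactly to membership in the list.

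\textbf{Part 2).} Let $L$ be real multi-quadratic with $\sqrt{2} \in L$. Write $L = F(\sqrt{2})$ with $F = \Q(\sqrt{d_1},\dots,\sqrt{d_r})$ and all $d_i$ odd; this is possible because any generator $\sqrt{2m}$ can be replaced by $\sqrt{m}$. Then $8 \nmid \mathfrak{f}_{F/\Q}$, so $F \cap \Q_\infty = \Q$ and $F_1 = F\Q_1 = F(\sqrt{2}) = L$. Moreover, the prime $2$ is totally ramified in $F_\infty/F$, since $2$ is unramified in $F$ and totally ramified in $\Q_\infty$.

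By Lemma \ref{IwasawaInvariantVanishFirstLevelOddCl}, $2 \nmid h(L)$ if and only if $A_{F_1} = 0$, if and only if $\lambda_2 = \mu_2 = \nu_2 = 0$ for $F$. By 1), this holds if and only if $F$ is in the list. The field $F$ depends on the choice of odd generators, but any valid choice works, because the vanishing of the invariants depends only on $F_\infty = L_\infty$.

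\textbf{Main obstacle.} The real work is the genus-field verification in 1), specifically checking $F^{\text{\rm Gen}} = F$ for every case in Theorem \ref{YamamotoIwaVanishTwo}. I would organise it by counting the primes $\equiv 3 \pmod 4$: the real part of $F(\sqrt{p^{\ast}})$ is generated by $F$ together with the products of an even number of the $\sqrt{-p}$. So one needs to show that every such product already lies in $F$ in the listed cases, which is immediate in each case since at most two such primes occur, and when two occur the field contains $\sqrt{pq}$.
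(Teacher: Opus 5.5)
Your proposal follows the paper's proof essentially step for step. In part 1) you use $F=F^{\text{\rm Elm}}$ to reduce Yamamoto's criterion to checking $F=F^{\text{\rm Gen}}$ via Zhang's description of $F^{\text{\rm NGen}}$. In part 2) you write $L=F(\sqrt{2})$ with odd generators and apply the first-layer lemma. Two of your justifications are wrong as stated, although the claims they support are true and easy to repair.

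First, in part 2) you argue that the primes over $2$ are totally ramified in $F_\infty/F$ ``since $2$ is unramified in $F$''. This is false in general. For example, $F=\Q(\sqrt{p})$ with $p\equiv 3 \pmod{4}$, which is in case a), has conductor $4p$, so $2$ ramifies in $F$. The correct reason runs as follows:
\begin{enumerate}[$1)$]
\item Every quadratic subfield of $F(\sqrt{2})$ not contained in $F$ is $\Q(\sqrt{2m})$ with $m$ odd, hence ramified at $2$.
\item So the inertia field of $2$ in $F(\sqrt{2})/\Q$ lies in $F$, i.e.\ every prime of $F$ above $2$ ramifies in $F_1=F(\sqrt{2})$.
\item A prime ramified in the first layer of a $\Z_2$-extension is totally ramified.
\end{enumerate}
The paper simply takes this fact from the discussion preceding Yamamoto's theorem.

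Second, in the genus-field check your remark that ``at most two such primes occur'' is false. In case d) with $p\equiv q\equiv 3$, $\ell\equiv 7 \pmod{8}$, and in case f), there are three ramified primes $\equiv 3 \pmod{4}$. The verification still succeeds, because those fields contain $\sqrt{pq}$, $\sqrt{p\ell}$ and $\sqrt{q\ell}$. The criterion you need is that $F$ contains $\sqrt{p}$ for \emph{every} ramified $p\equiv 1 \pmod{4}$ \emph{and} $\sqrt{pq}$ for \emph{every} pair of ramified $p,q\equiv 3 \pmod{4}$; your ``or'' should be ``and''. This is exactly what the paper's explicit formula for $F^{\text{\rm Gen}}$ expresses.
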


\begin{proof} 1) \ Since $ F =  F^{\text{\rm Elm}}, $ by Lemma \ref{IwasawaInvariantVanishFirstLevelOddCl} above,
$ \lambda_{2}(F) = \mu_{2}(f) = \nu_{2} (F) = 0  \Leftrightarrow
F = F^{\text{Gen}} =  F^{\text{Elm}} $ as described in Theorem \ref{YamamotoIwaVanishTwo}. Therefore, we only need to
show that all of the fields $ F $ as in Theorem \ref{YamamotoIwaVanishTwo} satisfying $ F^{\text{Gen}} = F. $ \\
To see this, let $ F = \Q(\sqrt{p_{1}}, \cdots, \sqrt{p_{m}}), $ where $ p_{1}, \cdots, p_{m} $
are distinct odd primes, and all of them are ramified in $ F /\Q. $ We may as well assume that
$ p_{1} \equiv \cdots \equiv p_{s} \equiv 1 \ (\text{\rm mod} \ 4) $ and
$ p_{s+1} \equiv \cdots \equiv p_{m} \equiv 3 \ (\text{\rm mod} \ 4) $ for some integer $ s : 1 \leq s \leq m. $
Then by Theorem \ref{ZhangXiankeNGenusField} above, we have
$$  \begin{aligned} F^{\text{NGen}} &= F(\sqrt{p_{1}}, \cdots, \sqrt{p_{s}}, \sqrt{-p_{s+1}}, \cdots, \sqrt{-p_{m}}) \\
&= F(\sqrt{p_{1}}, \cdots, \sqrt{p_{s}}, \sqrt{p_{s+1}p_{m}}, \cdots, \sqrt{p_{m-1}p_{m}}, \sqrt{-p_{m}}).
  \end{aligned} $$
So $ F^{\text{Gen}} = F(\sqrt{p_{1}}, \cdots, \sqrt{p_{s}}, \sqrt{p_{s+1}p_{m}}, \cdots, \sqrt{p_{m-1}p_{m}}). $
Now let $ F $ be as in cases a)-f) of Theorem \ref{YamamotoIwaVanishTwo}, then it can be easily verified that $ F^{\text{Gen}} = F, $
and we are done.  \\
2) \ By assumption, there exists a subfield $ F $ of $ L $ such that $ 8 \nmid \mathfrak{f}_{F/\Q}. $
Since every place over $ 2 $ is totally ramified in $ F_{\infty} / F, $ by Lemma \ref{IwasawaInvariantVanishFirstLevelOddCl},
we have $ A_{L} = 0 \Leftrightarrow \lambda_{2}(F) = \mu_{2}(F) = \nu_{2}(F) = 0. $ So by 1) above, we are done.
The proof is completed.
\end{proof}

\begin{theorem}\label{MainApplicationParity}
Let $ K $ be an imaginary multi-quadratic number field with $ \sqrt{2} \in K. $
Then $ 2 \nmid h(K) $ if and only if $ K $ is taken as one of the following form
\begin{enumerate}[$1)$]
\item $ \Q(\sqrt{2}, \sqrt{-p}), $ where $ p $ is a prime number with $ p \equiv 3 \ (\text{\rm mod} \ 8); $ \\
\item $ \Q(\sqrt{2}, \sqrt{-1}, \sqrt{-p}), $ where $ p $ is a prime number with $ p \equiv 3, 5 \ (\text{\rm mod} \ 8); $  \\
\item $ \Q(\sqrt{2}, \sqrt{-p}, \sqrt{-q}), $ where $ p $ and $ q $ are different prime numbers with
$ p, q \equiv 3 \ (\text{\rm mod} \ 8); $  \\
\item $ \Q(\sqrt{2}, \sqrt{-1}). $
\end{enumerate}
Moreover, for $ K = \Q(\sqrt{2}, \sqrt{-p}), $ if $ p \equiv 5 \ (\text{\rm mod} \ 8), $ then $ 2 \mid h(K) $
but $ 4 \nmid h(K). $
\end{theorem}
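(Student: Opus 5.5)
The plan is to reduce the parity of $h(K)$ to the vanishing of all Iwasawa invariants of a suitable subfield, in the same way as Corollary \ref{TotallyRealMultQuaIwasawaInaVanish}, and then use the explicit $\lambda_{2}$ formula of Corollary \ref{MainCorollary}. Since $\sqrt{2}\in K$, write $K = K_{0}(\sqrt{2})$, where $K_{0}=\Q(\sqrt{d_{1}},\dots,\sqrt{d_{r}},\sqrt{-d})$ is imaginary with all $d_{i},d$ odd, so that $8\nmid \mathfrak{f}_{K_{0}/\Q}$. Then $K_{0}\cap\Q_{\infty}=\Q$, every prime above $2$ is totally ramified in $K_{0,\infty}/K_{0}$, and $K=(K_{0})_{1}$. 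By Lemma \ref{IwasawaInvariantVanishFirstLevelOddCl}, $2\nmid h(K)$ holds if and only if $\lambda_{2}(K_{0})=\mu_{2}(K_{0})=\nu_{2}(K_{0})=0$. Ferrero--Washington gives $\mu_{2}=0$, and Corollary \ref{TotallyRealMultQuaIwasawaInaVanish} applied to $K_{0}^{+}$ handles the real part. The same lemma applied to the subfield $K^{+}=K_{0}^{+}(\sqrt{2})$ shows that $2\nmid h(K)$ forces $2\nmid h(K^{+})$, because the norm map to $K^{+}$ is surjective on $2$-class groups when $K/K^{+}$ is ramified, or more simply because of the class number relation. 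Hence $K_{0}^{+}$ must be one of the fields in Theorem \ref{YamamotoIwaVanishTwo}, and in particular $\lambda_{2}(K_{0}^{+})=0$.

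\textbf{Step 1: necessary condition $\lambda_{2}(K_{0})=0$.} Insert $\lambda_{2}(K_{0}^{+})=0$ into Corollary \ref{MainCorollary} with $\theta=0$:
\[
\lambda_{2}(K_{0})=\sum_{p\mid \prod d_{i}}2^{\nu_{2}(p^{2}-1)+r-4}+\sum_{p\mid d,\,p\nmid\prod d_{i}}2^{\nu_{2}(p^{2}-1)+r-3}-2^{r}+\delta .
\]
Each term is at least $2^{r-1}$ or $2^{r}$, since $\nu_{2}(p^{2}-1)\ge 3$. Requiring $\lambda_{2}(K_{0})=0$ bounds the number of ramified odd primes and forces $\nu_{2}(p^{2}-1)=3$, i.e.\ $p\equiv 3,5 \pmod 8$, in most cases. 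I will enumerate: $r=0$ (using Theorem \ref{FerreroImaQuaLambda} directly, which gives $K_{0}=\Q(\sqrt{-p})$ with $p\equiv\pm3 \pmod 8$, or $\Q(\sqrt{-1})$); $r=1$ with one or two odd ramified primes, split according to whether $\delta=1$; and $r=2$. Intersecting this list with Yamamoto's list for $K_{0}^{+}$ should leave exactly the candidates in 1)--4), together with $\Q(\sqrt{2},\sqrt{-p})$ for $p\equiv5\pmod 8$.

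\textbf{Step 2: the $\nu_{2}$ condition.} Even when $\lambda_{2}=\mu_{2}=0$, we still need $\nu_{2}=0$, which is equivalent to the $2$-class group of $K_{0}$ being trivial, or to a stable-level statement. I would check this directly by genus theory. For imaginary multiquadratic $K_{0}$, the $2$-rank of the class group is controlled by the number of ramified primes minus the unit contributions, computed via the ambiguous class number formula over $K_{0}^{+}$ together with the Hasse unit index (Proposition \ref{HasseUnitIndex}). In cases such as $\Q(\sqrt{-p})$ with $p\equiv 5\pmod 8$, the discriminant $-4p$ has two ramified primes, so $h$ is even while $\lambda_{2}=0$. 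This is exactly what excludes $p\equiv5$ in 1) and yields the ``moreover'' statement. I would compute $A_{K}$ there through Chevalley's ambiguous class formula for $K/\Q(\sqrt{2})$: its $2$-rank is $1$, and Rédei-type $4$-rank criteria together with the Kuroda class number formula for the biquadratic field $K$ (relating $h(K)$ to $h(\Q(\sqrt{-p}))h(\Q(\sqrt{-2p}))h(\Q(\sqrt2))$ and the unit index) show that $4\nmid h(K)$.

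\textbf{Step 3: sufficiency.} For each field in 1)--4) I will verify $\lambda_{2}(K_{0})=0$ via the formula above, note $\mu_{2}=0$, and show that the $2$-part of the class group of $K$ itself vanishes. The latter follows from Chevalley's formula over $K^{+}$: $K^{+}$ has odd class number by Corollary \ref{TotallyRealMultQuaIwasawaInaVanish}, and the number of ramified primes minus one equals the unit norm index. The main obstacle I expect is the case bookkeeping in Step 1 together with this unit-index/ambiguous-class computation in Steps 2--3. Lemma \ref{IwasawaInvariantVanishFirstLevelOddCl} requires all invariants, including $\nu_{2}$, to vanish, and the $\lambda_{2}$ formula alone does not control $\nu_{2}$.
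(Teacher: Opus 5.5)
Your necessity argument matches the paper's. You reduce to $K_0$ by Lemma \ref{IwasawaInvariantVanishFirstLevelOddCl}, use $h(K^+)\mid h(K)$ to put $K_0^+$ on Yamamoto's list so that $\lambda_2(K_0^+)=0$, and then enumerate with Corollary \ref{MainCorollary}.

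Your treatment of $p\equiv 5\pmod 8$ is a valid alternative to the paper's. First, $h(\Q(\sqrt{-p}))$ is even and $K/\Q(\sqrt{-p})$ is ramified above $2$, so $2\mid h(K)$. Second, Kuroda's formula $h(K)=\tfrac{Q}{2}\,h(\Q(\sqrt2))\,h(\Q(\sqrt{-p}))\,h(\Q(\sqrt{-2p}))$ has $Q=1$ because $N(1+\sqrt2)=-1$. Combined with R\'edei's criterion ($2\,\|\,h$ for discriminants $-4p$ and $-8p$ when $p\equiv5\pmod 8$), it gives $2\,\|\,h(K)$ without using case 2). The paper instead uses the unramified quadratic extension $\Q(\sqrt2,\sqrt{-1},\sqrt p)/K$ together with case 2). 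A minor point: given $\lambda_2=\mu_2=0$ and total ramification, $\nu_2=0$ is equivalent to $A_{K_1}=0$, not to $A_{K_0}=0$. You only use the harmless direction, so this does no damage.

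The gap is Step 3 for cases 2) and 3). The claim that ``the number of ramified primes minus one equals the unit norm index'' is exactly what has to be proved, and as phrased it is off. Chevalley's formula needs $2^{t-1}=[E_{K^+}:E_{K^+}\cap N_{K/K^+}K^\times]$, with $t$ counting the $[K^+:\Q]$ real places as well.

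In case 3), and in case 2) with $p\equiv 3\pmod 8$, no finite prime ramifies in $K/K^+$. For example, in case 3) the primes $2$, $p$, $q$ all have ramification index $2$ in both $K^+=\Q(\sqrt2,\sqrt{pq})$ and $K$. So what you need is that the totally positive units of $\Q(\sqrt2,\sqrt p)$, resp.\ $\Q(\sqrt2,\sqrt{pq})$, have index exactly $8$, i.e.\ the unit signature rank is $3$. The evident units $-1,\varepsilon_2$ and the norm $+1$ units $\varepsilon_p,\varepsilon_{2p}$ (resp.\ $\varepsilon_{pq},\varepsilon_{2pq}$) give rank only $2$, since the latter are totally positive. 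You would therefore have to show, uniformly in $p,q$, that some quadratic unit acquires a square root in $K^+$ with the right sign pattern. Examples are $\sqrt{\varepsilon_p}=(u+v\sqrt p)/\sqrt2$ with $u^2-pv^2=-2$ when $p\equiv3\pmod 8$, and $\sqrt{\varepsilon_{66}}=\sqrt{33}+4\sqrt2$ for $p=3,q=11$. Case 2) with $p\equiv5\pmod 8$ has $t=5$ and similarly needs $E_{K^+}/E_{K^+}^2$ to inject into the five local norm symbols.

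Your plan gives no way to determine these unit groups, so sufficiency is established only for cases 1) and 4), where the units of $\Q(\sqrt2)$ make the check immediate. The paper avoids this computation:
\begin{itemize}
\item case 2) is quoted from \cite[Theorem 5.4]{ACZ2};
\item case 1) is deduced from case 2) via $\Q(\sqrt2,\sqrt{-1},\sqrt p)/K$, which is ramified above $2$;
\item case 3) is done over $k_1=\Q(\sqrt2,\sqrt{-q})$ with $K=k_1(\sqrt{pq})$. The unit norm index is transferred to $\Q_1(\sqrt{pq})/\Q_1$ by \cite[Theorem 3.1]{MR}, using that $p,q$ do not split in $\Q_\infty$. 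It is then evaluated by \cite[Proposition 1]{EM} from $2\nmid h(\Q(\sqrt2,\sqrt{pq}))$.
\end{itemize}
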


\begin{proof} $ \Rightarrow. $ By assumption, $ K = \Q(\sqrt{2}, \sqrt{d_{1}}, \cdots, \sqrt{d_{r}}, \sqrt{-d}) $
with $ [K : \Q] = 2^{r+2} $ for some positive odd square-free integers $ d_{1}, \cdots, d_{r}, d. $
We may as well assume that there is an odd prime number $ p \mid d_{r} $ but
$ p \nmid d \prod_{i=1}^{r-1} d_{i}. $ Write $ F = \Q(\sqrt{d_{1}}, \cdots, \sqrt{d_{r}}) $ and
$ E = F (\sqrt{-d}). $ Then by Lemma \ref{IwasawaInvariantVanishFirstLevelOddCl} above,
the class number $ h(K) $ is odd if and only if all the
Iwasawa invariants $ \lambda_{2}, \mu_{2} $ and $ \nu_{2} $ of the cyclotomic
$ \Z_{2}$-extension of $ E $ are equal to zero. Now assume that $ h(K) $ is odd. Then $ \lambda_{2}(E) = 0. $
So the intersection of $ K $ with the Hilbert $ 2$-class field of $ F(\sqrt{2}) $ is equal to $ F $
because $ K $ is totally imaginary and $ F(\sqrt{2}) $ is totally real. Hence $ A_{F(\sqrt{2})} $
is isomorphic to a quotient of $ A_{K}. $ So if $ A_{K} = 0, $ then $ A_{F(\sqrt{2})} = 0, $
which is equivalent to say that all the Iwasawa invariants of the cyclotomic $ \Z_{2}$-extension of
$ F $ are vanishing (via the Lemma \ref{IwasawaInvariantVanishFirstLevelOddCl} above). While Corollary
\ref{TotallyRealMultQuaIwasawaInaVanish} above provides all the possibility of
such $ F. $ Notice that for any odd prime number $ \ell, $ we have $ \nu_{2}(\ell^{2} - 1) \geq 3, $ and
the equality holds if and only if $ \ell \equiv 3, 5 \ (\text{\rm mod} \ 8). $  \\
If $ r \geq 2, $ then by Corollary \ref{MainCorollary} above, we have
$$ \lambda_{2}(E) = \sum_{\text{primes} \ p \mid \prod_{i=1}^{r} d_{i}, p>2} 2^{\nu_{2}(p^{2} - 1) +r -4} +
\sum_{\text{primes} \  p \mid d, p \nmid 2\prod_{i=1}^{r} d_{i}} 2^{\nu_{2}(p^{2} - 1) + r -3} -2^{r} + \delta. $$
Since $ \lambda_{2}(E) = 0, $ by modulo $ 2 $ of the two sides of the above equality, we get $ \delta = 0. $
So $ d \notin F(\sqrt{2}). $ Hence $ d_{1}d_{2} = pq $ and $ d \mid pq $ with $ p, q \equiv 3, 5 \ (\text{\rm mod} \ 8), $
also $ r = 2 $  because any prime number $ \ell $ satisfying $ v_{2} (\ell^{2} - 1) \geq 3. $
Then by Corollary \ref{TotallyRealMultQuaIwasawaInaVanish} above, we have $ F = \Q(\sqrt{p}, \sqrt{q}), $ but
$ d \mid pq $ and $ d \notin F(\sqrt{2}), $
a contradiction! Therefore, $ r = 0 $ or $ 1. $  \\
If $ r = 0, $ i.e., $ E = \Q(\sqrt{-d}). $ Then by Theorem \ref{FerreroImaQuaLambda} above, we have $ \lambda_{2}(E) = 0, $
which implies that $ d = p $ is a prime number satisfying $ p \equiv 3, 5 \ (\text{\rm mod} \ 8) $
or $ d = 1. $ \\
If $ r = 1, $ then $ F = \Q(\sqrt{p}) $ or $ \Q(\sqrt{pq}) $ for some (different) odd prime numbers $ p $
and $ q. $ For $ F = \Q(\sqrt{pq}), $ by Corollary \ref{MainCorollary} above, we have
$$ \lambda_{2}(E) = \sum_{\text{primes} \ \ell \mid pq} 2^{\nu_{2}(\ell^{2} - 1) -3} +
\sum_{\text{primes} \  \ell \mid d, \ell \nmid pq} 2^{\nu_{2}(\ell^{2} - 1) -2} -2 + \delta. $$
Since $ \lambda_{2}(E) = 0, $ we have $ p, q \equiv 3 \ (\text{\rm mod} \ 8) $ and $ \delta = 0, $ also
every prime factor of $ d $ must divide $ pq. $ While $ \delta = 0 \Leftrightarrow
\sqrt{d} \notin \Q(\sqrt{2}, \sqrt{pq}). $ Hence $ d = p $ or $ q. $ Similarly, for $ F = \Q(\sqrt{p}), $
we have $ \delta = 1 $ because $ \lambda_{2}(E) = 0. $ So $ d = p $ or $ 1 $ because $ \delta = 1
\Leftrightarrow \sqrt{d} \in \Q(\sqrt{2}, \sqrt{p}). $  \\
$ \Leftarrow. $ Conversely, we assume that $ K $ is one of the given cases. Notice that,
by \cite[Theorem 5.4]{ACZ2}, $ h(K) $ is odd for $ K $ in case 2). \\
For case 1), if $  p \equiv 3 \ (\text{\rm mod} \ 8), $ then $ 2 $ is totally ramified in $ \Q(\sqrt{2}, \sqrt{p}) $
and inert in $ \Q(\sqrt{-p}), $ so all the places over $ 2 $ ramify in
$ \Q(\sqrt{2}, \sqrt{-1}, \sqrt{p}) / \Q(\sqrt{2}, \sqrt{-p}). $ Then the Hilbert $ 2$-class group of
$ \Q(\sqrt{2}, \sqrt{-p}) $ is isomorphic to a quotient of the Hilbert $ 2$-class group of
$ \Q(\sqrt{2}, \sqrt{-1}, \sqrt{p}). $ As the class number of $ \Q(\sqrt{2}, \sqrt{-1}, \sqrt{-p}) $ is odd as
in case 2) above, so is the class number of $ \Q(\sqrt{2}, \sqrt{-p}). $ If $  p \equiv 5 \ (\text{\rm mod} \ 8), $
then it is easy to see that $ \Q(\sqrt{2}, \sqrt{-1}, \sqrt{p}) / \Q(\sqrt{2}, \sqrt{-p}) $ is unramified.
Also as the class number of $ \Q(\sqrt{2}, \sqrt{-1}, \sqrt{p}) $ is odd as in case 2) above,
it can be easily seen that the class number of $ \Q(\sqrt{2}, \sqrt{-p}) $ is even but not divided by $ 4. $  \\
For case 3), denote $ K = \Q(\sqrt{2}, \sqrt{-p}, \sqrt{-q}) = \Q(\sqrt{2}, \sqrt{pq}, \sqrt{-q}),
k =  \Q(\sqrt{- q}),  d = pq. $ We may as well assume that $ q > 3. $
Since $ p \equiv q \equiv 3 \ (\text{\rm mod} \ 8), $ by Corollary \ref{DecompositionsOfOddPrimeInQn} above,
we know that both $ p $ and $ q $ are not decomposable in $ \Q_{\infty} / \Q. $ By \cite[Theorem 3.1]{MR},
we have $$  \sharp (E_{k_{1}} / E_{k_{1}} \cap N_{k_{1}(\sqrt{d}) / k_{1}}(k_{1}(\sqrt{d})^{\text{Gen}})) =
\sharp (E_{\Q_{1}} / E_{\Q_{1}} \cap N_{\Q_{1}(\sqrt{d}) / \Q_{1}}(\Q_{1}(\sqrt{d})^{\text{Gen}})). $$
By Theorem \ref{ZhangXiankeNGenusField} above,
$$ \begin{aligned} k_{1} (\sqrt{d})^{\text{Gen}} = \Q (\sqrt{-q}, \sqrt{2}, \sqrt{d})^{\text{Gen}}
= \Q (\sqrt{-q}, \sqrt{2}) (\sqrt{-q}, \sqrt{-p}) = \Q (\sqrt{-q}, \sqrt{2}, \sqrt{-p}) = K.  \\
\Q_{1}(\sqrt{d})^{\text{Gen}} = \Q (\sqrt{2}, \sqrt{d}) (\sqrt{-p}, \sqrt{-q}) \cap \R = \Q (\sqrt{2}, \sqrt{d})
= K \cap \R.
\end{aligned}  $$
We know that the class number $ h(\Q(\sqrt{2})) = 1 $ and $ 2 \nmid h(\Q(\sqrt{2}, \sqrt{d})).  $ moreover,
both $ p $ and $ q $ are inert in $ \Q (\sqrt{2}), $ and $ \Q (\sqrt{2}, \sqrt{d}) / \Q (\sqrt{2}) $ is unramified
outside $ \{p, q \}, $ by \cite[Proposition 1]{EM}, we have
$$ \text{rank}_{2}(E_{\Q_{1}} / E_{\Q_{1}} \cap N_{\Q_{1}(\sqrt{d}) / \Q_{1}}(\Q_{1}(\sqrt{d}))) =
- \text{rank}_{2} ( \text{Cl}_{\Q (\sqrt{2}, \sqrt{d})}) + 2 - 1 = 1.  $$
Also $ E_{\Q_{1}}^{2} \subset E_{\Q_{1}} \cap N_{\Q_{1}(\sqrt{d}) / \Q_{1}}(\Q_{1}(\sqrt{d})), $
so $ E_{\Q_{1}} / E_{\Q_{1}} \cap N_{\Q_{1}(\sqrt{d}) / \Q_{1}}(\Q_{1}(\sqrt{d})) $ is an elementary
$ 2$-abelian group, hence,
$$ \sharp (E_{\Q_{1}} / E_{\Q_{1}} \cap N_{\Q_{1}(\sqrt{d}) / \Q_{1}}(\Q_{1}(\sqrt{d})^{\text{Gen}})) = 2
= \sharp (E_{k_{1}} / E_{k_{1}} \cap N_{k_{1}(\sqrt{d}) / k_{1}}(k_{1}(\sqrt{d})^{\text{Gen}})). $$
Therefore, by \cite[Proposition 1]{EM} again, we get $ \text{rank}_{2} (\text{Cl}_{K}) = 0. $ This proves
case 3). The proof is completed.
\end{proof}

\begin{remark}
    For $ K = \Q(\sqrt{2}, \sqrt{-p}, \sqrt{-q}) $ in case 3) of Theorem \ref{MainApplicationParity} above,
we find the result of \cite[Corollary 3.5]{CCH} stated that $ 2 \mid h(K), $ which is different from ours.
However, \\
$ h(\Q(\sqrt{2}, \sqrt{-11}, \sqrt{33})) = 1 $ (see \cite{Y}, \cite{F}), which
is consistent with our conclusion.
\end{remark}

{ \bf Acknowledgments } \ We would like to thank the referee
for helpful suggestions and comments.

\end{document}